\newcommand{\euler}{\mathrm{e}}
\newcommand{\BV}{\mathrm{BV}}
\newcommand{\drm}{\mathrm{d}}
\newcommand{\RR}{\mathbb{R}}
\newcommand{\Eins}{\mathbf{1}}
\newcommand{\CC}{\mathbb{C}}
\newcommand{\NN}{\mathbb{N}}
\newcommand{\ZZ}{\mathbb{Z}}
\newcommand{\EE}{\mathbb{E}}
\newcommand{\PP}{\mathbb{P}}
\newcommand{\cc}{\mathfrak{C}}
\newcommand{\rkl}[1]{\ensuremath{\left(#1\right)}}
\newcommand{\cd}[1]{\ensuremath{\mathfrak{C}_{#1}}}
\renewcommand{\epsilon}{\varepsilon}
\newcommand{\nn}{\nonumber}
\DeclareMathOperator{\supp}{\mathop{supp}}
\DeclareMathOperator{\dist}{\mathop{dist}}
\DeclareMathOperator{\Tr}{\mathop{Tr}}
\newtheorem{theorem}{Theorem}[section]
\newtheorem{lemma}[theorem]{Lemma}
\newtheorem{proposition}[theorem]{Proposition}
\theoremstyle{definition}
\newtheorem{definition}[theorem]{Definition}
\newtheorem{assumption}{Assumption}
\theoremstyle{remark}
\newtheorem{remark}[theorem]{Remark}
\newcommand{\myfootnote}[1]{%
\renewcommand{\thefootnote}{}%
\footnotetext{#1}%
\renewcommand{\thefootnote}{\arabic{footnote}}%
}
\begin{document}
%
%
%
\title{Wegner estimate and localization for alloy-type models with sign-changing exponentially decaying single-site potentials}

\author[1,3]{Karsten Leonhardt}
\author[2]{Norbert Peyerimhoff}
\author[1]{Martin Tautenhahn}
\author[1]{Ivan Veseli\'c}

\affil[1]{Faculty of Mathematics, Chemnitz University of Technology, 09107 Chemnitz, Germany}

\affil[2]{Department of Mathematical Sciences, Durham University, DH1 3LE, Great Britain} 

\affil[3]{Current address: Max Planck Institute for the Physics of Complex Systems, 01187 Dresden, Germany} 
\date{}
%
\maketitle

\begin{abstract}
We study Schr\"odinger operators on $L^2 (\RR^d)$ and $\ell^2(\ZZ^d)$ with a random potential of alloy-type. 
The single-site potential is
assumed to be exponentially decaying but not necessarily of fixed sign.
In the continuum setting we require a generalized step-function shape.
Wegner estimates are bounds on the average number of eigenvalues in an energy interval of finite box restrictions of these types of operators. 
In the described situation a Wegner estimate which is polynomial in the volume of the box and linear in the size of the energy interval holds. We apply the established Wegner estimate as an ingredient for a localization proof via multiscale analysis. 
\myfootnote{Keywords: random Schr\"odinger operators, alloy-type model, discrete alloy-type model, integrated density of states, Wegner estimate, single-site potential}
\myfootnote{MSC2010: 82B44, 60H25, 35J10}
\end{abstract}
\tableofcontents
%
%
%
\section{Introduction}
\label{s:Intro}
The theory of Anderson localization is concerned with spatial concentration and decay of eigenfunctions, as well as the corresponding dynamical quantities like wave-packets. 
The interest in these features stems from the quantum theory of disordered media, which provides a relation to transport properties of the modeled medium.
\par
A paradigmatic, and probably the most studied model in the mathematics literature on Anderson localization is the alloy-type potential.
The reason for its popularity is that it allows to model structural features of the stochastic field determining the potential explicitly. The features which have attracted most 
of attention are non-monotonicity, the covering-property, and long-range correlations.
\par
They have been addressed for instance in \cite{Klopp-95a}, \cite{Kirsch-96}, and \cite{KirschSS-98b}, respectively. 
A further advantage of the alloy-type model is, that it can be treated in the continuum as well as in the discrete setting.
In particular, the problem of non-monotonicity in the potential has been tackled in many papers, with intensified interest in recent years, 
\cite{Klopp-95a}, 
\cite{Veselic-01,Veselic-02a}, 
\cite{HislopK-02}, 
\cite{KloppN-09a}, 
\cite{KostrykinV-06}, 
\cite{Bourgain-09}, 
\cite{Veselic-10b},\cite{Veselic-10},
\cite{ElgartTV-10,ElgartTV-11}, 
\cite{Krueger-12},
\cite{CaoE-12}, \cite{ElgartSS}. 
\par
The main results of the paper at hand are the following:
\begin{itemize}
 \item A new Wegner estimate valid for discrete as well as continuum alloy-type models.
 \item Along the way we give an explicit construction of strictly positive linear combinations of translates of single-site potentials.
 \item Compatibility of non-monotonicity with long-range interactions in the multiscale analysis proof of interactions. This has been obtained by Kr\"uger in \cite{Krueger-12} for the discrete model.
 We show that this holds in the continuum case as well.
\end{itemize}
Our implementation of the multiscale analysis (MSA in the following) is a different one than Kr\"uger's. 
Rather that relying on \cite{Bourgain-09}, we use the strategy of \cite{KirschSS-98b}. This results in a much simpler version of the MSA. 
For the discrete alloy-type model we give a detailed proof, accessible also to non-specialists.
\par
For the continuum analogue we establish a result on the control of resonances, which allows to merge into the MSA presented in \cite{KirschSS-98b}.
\par
In a separate Section~\ref{sec:discussion} we discuss the methodical and physical implications of negative (sign-changing single-site potential) and long range (non-compactly supported single-site potential) 
correlations.
%
%
%
\section{Model and results} \label{sec:model}
\subsection{Model and basic notation}
 We first introduce the continuous model.
The \emph{alloy-type model} is given by the family of Schr\"o{}dinger operators
\begin{equation*}
H_\omega \colonequals H_0 + V_\omega \ \text{on $L^2 (\RR^d)$}, \quad H_0 := -\Delta + V_0, \quad \omega \in \Omega ,
\end{equation*}
where $-\Delta$ is the negative Laplacian, $V_0$ a $\ZZ^d$-periodic potential, and $V_\omega$ denotes the multiplication by the $\ZZ^d$-ergodic random field
\begin{equation*}
V_\omega (x) \colonequals \sum_{k \in \ZZ^d} \omega_k U(x-k) .
\end{equation*}
The so-called \emph{coupling constants} $\omega_k$, $k \in \ZZ^d$, are assumed to be independent identically distributed (i.i.d.) random variables according to a probability measure $\mu$ with bounded support. Hence the probability space has a product structure $\Omega \colonequals \times_{k \in \ZZ^d} \supp \mu$, is equipped with the $\sigma$-Algebra $\mathcal{F}$ generated by the cylinder sets and the probability measure $\PP \colonequals \otimes_{k \in \ZZ^d} \mu$. 
The corresponding expectation is denoted by $\EE$, i.e.\ $\EE(\cdot) \colonequals \int_\Omega (\cdot) \PP (\drm \omega)$. For a set $\Gamma \subset \ZZ^d$, $\EE_\Gamma$ denotes the expectation with respect to $\omega_k$, $k \in \Gamma$. That is, $\EE_\Gamma (\cdot) \colonequals \int_{\Omega_\Gamma} \prod_{k \in \Gamma} \mu (\drm \omega_k)$ where $\Omega_{\Gamma} \colonequals \times_{k \in \Gamma} \RR$.
\par
The function $U:\RR^d \to \RR$ is called \emph{single-site potential}. Throughout this paper we assume that $V_0$ and $V_\omega$ are infinitesimally bounded with respect to $\Delta$ and that the corresponding constants can be chosen uniform in $\omega \in \Omega$. This is in particular satisfied if $U$ is a so-called \emph{generalized step-function}.
\begin{definition}[Generalized step-function] \label{def:step}
Let $L_{\rm c}^p (\RR^d) \ni w \geq \kappa \chi_{(-1/2,1/2)^d}$ with $\kappa > 0$ and $p = 2$ for $d \leq 3$ and $p>d/2$ for $d \geq 4$, where $L_{\rm c}^p (\RR)$ denotes the vector space of $L^p (\RR)$ functions with compact support. Let $u \in \ell^1 (\ZZ^d ; \RR)$. 
A function $U : \RR^d \to \RR$ of the form 
\[
U (x) = \sum_{k \in \ZZ^d} u (k) w(x-k) 
\]
is called \emph{generalized step-function} and the function $u:\ZZ^d \to \RR$ a \emph{convolution vector}. If $U$ is a generalized step-function we define $r = \sup\{ \lVert x \rVert_\infty \colon w(x) \not = 0 \}$.
\end{definition}
Recall that any real-valued function on $\RR^d$ that is uniformly locally $L^p$, with $p = 2$ for $d \leq 3$ and $p>d/2$ for $d \geq 4$, is infinitesimally bounded with respect to the self-adjoint Laplacian $\Delta$ on $W^{2,2} (\RR^d)$, see e.\,g. \cite[Theorem~XIII.96]{ReedS-78d}. 
This is indeed satisfied for $V_\omega$ if $U$ is a generalized step-function, since for any unit cube $C \subset \RR^d$ we have using $\supp u \subset [-r,r]^d$ and H\"older's inequality 
\begin{align}
\label{eq:loc-lp-norm}
\int_{C} \lvert V_\omega (x) \rvert^p \drm x &= \int_C \Bigl\lvert \sum_{k \in \ZZ^d} \omega_k \sum_{l \in \ZZ^d} u(l-k) w(x-l) \Bigr\rvert^p \drm x  \nn
\\ \nn
& \leq \omega_+^p \lVert u \rVert_{\ell^1 (\ZZ^d)}^p \int_C \Bigl( \sum_{l \in \ZZ^d} \chi_{[-r,r]^d} (x-l)  \lvert w(x-l) \rvert \Bigr)^p \drm x  \\ 
&\leq \omega_+^p \lVert u \rVert_{\ell^1 (\ZZ^d)} ^p (2r+1)^{d(p-1)}  \lVert w\rVert_{L^p (\RR)}^p ,
\end{align}
where $\omega_+ = \sup \{\lvert t \rvert : t \in \supp \mu\}$. Notice that the upper bound is uniform in $\omega \in \Omega$.
Hence, $V_0$ and $V_\omega$ are infinitesimally bounded with respect to $\Delta$ and the corresponding constants can be chosen uniform in $\omega \in \Omega$. Therefore, $H_\omega$ is self-adjoint (on the domain of $\Delta$) and bounded from below (uniform in $\omega \in \Omega$).
\par
Let us now introduce the discrete analogue of the alloy-type model. The \emph{discrete alloy-type model} is the family of Schr\"odinger operators 
\[
h_\omega\colonequals h_0+v_\omega \ \text{on $\ell^2(\ZZ^d)$}, \quad \omega \in \Omega .
\]
Here $h_0$ is the negative discrete Laplacian on $\ell^2 (\ZZ^d)$ given by 
\[
(h_0 \psi)(x) = \sum_{\lVert x-y \rVert_1 = 1} (\psi (x) - \psi (y)). 
\]
The random part $v_\omega$ is a multiplication operator by the function
\begin{equation*} 
v_\omega (x) \colonequals\sum_{k \in \ZZ^d} \omega_k \, u(x-k) ,
\end{equation*}
where $u \in \ell^1(\ZZ^d;\RR)$ is called \emph{single-site potential}. Notice that in the continuous setting, the (discrete) single-site potential $u$ plays the role of a convolution vector to generate the (continuous) single-site potential $U$ in form of a generalized step-function. With other words, the convolution vector of a generalized step-function serves as a single-site potential for our discrete model. 
\par
Next we introduce some assumptions on the function $u$ and the measure $\mu$. 
For $k \in \ZZ^d$ we denote by $\lVert k \rVert_1 \colonequals \sum_{r=1}^d \lvert k_r \rvert$ the $\ell^1$-norm of $k$.
\begin{assumption} \label{ass:exp}
There are constants $C,\alpha > 0$ such that for all $k \in \ZZ^d$ we have
\begin{equation*} 
\lvert u(k) \rvert \leq C \euler^{-\alpha \lVert k \rVert_1} .
\end{equation*}
\end{assumption}
Assumption~\ref{ass:exp} gives rise to constants $c_u \not = 0$ and $I_0 \in \NN_0^d$, both depending only on the function $u$, i.e.~$C$ and $\alpha$. The constants $c_u$ and $I_0$ are defined in Section~\ref{sec:transformation}, see in particular Eq.~\eqref{eq:cF}. We use the shorthand notation $N = \lVert I_0 \rVert_1$. If the mean value $\overline{u} = \sum_{k \in \ZZ^d} u(k)$ is positive one can choose $I_0 = 0$ and $c_u = \overline u$. 
If $\overline u = 0$ then $I_0$ and $c_u$ depend on the behavior of the generating function associated to $u$,  at the argument value $1\in \CC^d$. 
Moreover,  for any $l > 0$ we define
\begin{equation*}
  R_l \colonequals \max \left\{ 2l + \frac{2}{\alpha} \ln \frac{2\cdot 3^d\, C}
  {\lvert c_u \rvert (1-\mathrm{e}^{-\alpha/2})}, \frac{8 (d+\lVert I_0 \rVert_1)^2}{\alpha^2} \right\}. 
\end{equation*}
\begin{assumption}\label{ass:bv}
The measure $\mu$ has a density $\rho \in \BV (\RR)$.
\end{assumption}
Here $\BV(\RR)$ denotes the space of functions of finite total variation. A precise
definition of this function space is given in Section~\ref{sec:abstract_wegner}.
\begin{assumption} \label{ass:small_neg}
We say that Assumption~\ref{ass:small_neg} is satisfied for $\delta > 0$, if there exists a decomposition $u = u_+ - \delta u_-$ with $u_+ , u_- \in \ell^1 (\ZZ^d ; \RR^+_0)$, and $\lVert u_- \rVert_1 \leq 1$. 
For the measure $\mu$ we assume $\supp \mu = [0,\omega_+]$ for some $\omega_+ > 0$.
\end{assumption}
The estimates we want to prove concern finite box restrictions  of the operator $H_\omega$ or $h_\omega$, $\omega\in  \Omega$. For $l>0$ and $j \in \ZZ^d$ we denote by 
\[
\Lambda_{l} (j)\colonequals (-l , l)^d + j \subset \RR^d
\]
the open cube of side length $2l$ centered at $j$. We will use the notation $\Lambda_{l} \colonequals \Lambda_{l} (0)$. By $H_\omega^\Lambda$ we denote the restriction of the operator $H_\omega$ to a bounded open set $\Lambda \subset \RR^d$ with Dirichlet boundary conditions on $\partial \Lambda$. In the special case when $\Lambda$ is a cube, $H_\omega^\Lambda$ will denote the restriction of $H_\omega$ to $\Lambda$ either with Dirichlet or with periodic boundary conditions. Let $P_B (H_\omega^\Lambda)$ denote the spectral projection for the operator $H_\omega^\Lambda$ associated with a Borel set $B \subset \RR$. If $\Lambda = \Lambda_{l}$ we will write $H_\omega^l$ and $P_B (H_\omega^l)$ instead of $H_\omega^{\Lambda_{l}}$ and $P_B (H_\omega^{\Lambda_{l}})$.
Analogously for the discrete model, for $l > 0$ and $j \in \ZZ^d$ let 
\[
\cd{l} (j) \colonequals \bigl([-l,l]^d + j\bigr) \cap \ZZ^d 
\]
and $\cd{l} \colonequals \cd{} (0)$. For $\cd{} \subset \ZZ^d$ finite we denote the canonical inclusion $\ell^2(\cd{})\to \ell^2(\ZZ^d)$ by $\iota_{\cd{}}$ and the adjoint restriction $\ell^2(\ZZ^d)\to \ell^2(\cd{})$ by $\pi_{\cd{}}$. The restriction of $h_\omega$ to $\cd{}$ is defined by $h_{\omega}^{\cd{}} \colonequals \pi_{\cd{}} h_0 \iota_{\cd{}} + \pi_{\cd{}} v_\omega \iota_{\cd{}} \colon \ell^2(\cd{})\to \ell^2(\cd{})$. 
Let $P_B (h_\omega^{\cd{}})$ denote the spectral projection for the operator $h_\omega^{\cd{}}$ associated with a Borel set $B \subset \RR$ and if $l > 0$ and $\cd{} = \cd{l}$ we will write $h_\omega^l$ and $P_B (h_\omega^l)$ instead of $h_\omega^{\cd{l}}$ and $P_B (h_\omega^{\cd{l}})$.
\subsection{Results on Wegner estimate}
Now we are in the position to state our bounds on the expected number of eigenvalues of finite box Hamiltonians $H_{\omega}^l$ and $h_{\omega}^l$ in a bounded energy interval $[E-\epsilon, E+\epsilon] \subset \RR$. They are called Wegner estimates \cite{Wegner-81} and are inequalities of the type
\begin{equation} \label{eq:generalWegner}
\forall\, l>0, E \in \RR,\epsilon>0 \colon \quad
\EE \bigl \{\Tr \bigl( P_{[E-\epsilon, E+\epsilon]} (h_\omega^l) \bigr) \bigr\} \le C_{\rm W} \ (2 \epsilon)^a\, (2l+1)^{b\, d}
\end{equation}
with some (Wegner-)constant $C_{\rm W}$, some $a\leq1$ and some $b\geq 1$. The exponent $a$ determines the quality of the estimate with respect to the \emph{length of the energy interval} and $b$ the quality with respect to the \emph{volume of the cube} $\cd{l}$. The best possible estimate is obtained in the case $a=1$ and $b=1$. 
\par
The precise formulation of the Wegner estimate relies on the definition of the quantities $r$, $c_u$, $I_0$, $N$ and $R_l$ 
introduced above and defined precisely in Section~\ref{sec:transformation}. 
\begin{theorem}[Wegner estimate, continuous model] \label{theorem:wegner_c} 
Assume that $U$ is a generalized step-function and that Assumptions~\ref{ass:exp} and \ref{ass:bv} are satisfied.
Then there exists $C_{\rm W} = C_{\rm W}(U)>0$ and $N = N(u)$, such that for any $l>0$ and any bounded interval $I\colonequals[E_1,E_2] \subset \RR$ we have with $\Gamma = \cd{R_{l+r}}$
\[
\EE_\Gamma \bigl \{\Tr \bigl(P_I (H_\omega^l) \bigr)\bigr\}
\le
\euler^{E_2} C_{\rm W} \lVert \rho \rVert_{\rm Var} \lvert I \rvert (2l+1)^{2d + N} .
\]
\end{theorem}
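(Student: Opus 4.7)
The plan is to reduce Theorem~\ref{theorem:wegner_c} to an abstract Wegner estimate for Schr\"odinger operators depending monotonically on independent random parameters with $\BV$ density. The bridge between the (non sign-definite) alloy-type model and the monotone abstract setting is the linear change of variables developed in Section~\ref{sec:transformation}, whose feasibility is the genuine content of Assumption~\ref{ass:exp} together with the algebraic non-degeneracy encoded by $c_u\neq 0$ and $I_0$.

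First I would interchange summations to rewrite the potential as
\[
V_\omega(x)=\sum_{l\in\ZZ^d}\eta_l\,w(x-l), \qquad \eta_l\colonequals\sum_{k\in\ZZ^d}u(l-k)\,\omega_k.
\]
Only the $\eta_l$ with $l\in\cd{l+r}$ contribute to $H_\omega^l$ since $\supp w\subset[-r,r]^d$, and by Assumption~\ref{ass:exp} each such $\eta_l$ depends, up to an exponentially small remainder, only on $\omega_k$ with $k\in\Gamma=\cd{R_{l+r}}$; the definition of $R_l$ is precisely calibrated to make this truncation negligible. The model is thus recast as a coverage-type field with positive bumps $w\ge\kappa\chi_{(-1/2,1/2)^d}$ and correlated coefficients $\eta_l$, and the remaining task is to decorrelate them.

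The decorrelation is effected by the substitution of Section~\ref{sec:transformation}. Since $\omega\mapsto\eta$ is discrete convolution with $u$, its symbol is the generating function $F(z)=\sum_k u(k)\,z^k$, and the constants $c_u$, $I_0$ arise as the first non-vanishing Taylor coefficient of $F$ at $z=(1,\ldots,1)$. Using this algebraic datum one selects inside $\Gamma$ a subfamily of the $\eta_l$ of cardinality $\mathcal{O}((2l+1)^d)$ to play the role of new, essentially independent averaging variables (completed by auxiliary $\omega_k$'s so that the map is a bijection on $\RR^{\lvert\Gamma\rvert}$). The Jacobian then factors through powers of $\lvert c_u\rvert$, and the pushforward density $\tilde\rho$ on the $\eta_l$-variables has $\BV$-norm controlled by $\lVert\rho\rVert_{\rm Var}$ up to polynomial losses in $(2l+1)$. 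In these variables $H_\omega^l$ depends monotonically on each selected $\eta_l$, since $\partial V_\omega/\partial\eta_l = w(\cdot-l)\ge 0$, and the coverage estimate $\sum_{l\in\cd{l}} w(\cdot-l)\ge\kappa$ holds on $\Lambda_l$. Combining the pointwise bound $\chi_I(\lambda)\le\euler^{E_2}\euler^{-\lambda}$, the uniform semigroup trace bound $\Tr\,\euler^{-H_\omega^l}\le c(2l+1)^d$, and a Stollmann-type spectral averaging applied one $\eta_l$ at a time (each such integration against $\tilde\rho$ producing a factor $\kappa^{-1}\lVert\tilde\rho\rVert_{\rm Var}\lvert I\rvert$), one obtains the announced bound with volume exponent $2d+N$ after summing over the averaging sites.

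The main obstacle lies in the change of variables: one must verify simultaneously that the Jacobian contributes only polynomially (not exponentially) in $l$, that the pushforward measure stays absolutely continuous with $\BV$ density of controlled norm, and that enough of the $\eta_l$ remain independent to serve as averaging variables for every relevant site in $\Lambda_l$. This is essentially an algebraic and combinatorial problem about convolution by $u$ near its degeneracy at $z=(1,\ldots,1)$, and it is precisely what Section~\ref{sec:transformation} is designed to solve; the exponent $N=\lVert I_0\rVert_1$ in the final volume factor is the visible signature of the multi-index shift $I_0$ that must be introduced whenever $\overline u=0$.
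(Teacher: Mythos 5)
Your overall strategy --- reduce to a monotone abstract Wegner estimate via a linear change of variables built from the generating function $F$ and the data $(c_u,I_0)$, use the coverage $w\ge\kappa\chi_{(-1/2,1/2)^d}$, and pay $\euler^{E_2}$ times a heat-semigroup trace bound for the unboundedness of the trace --- has the right shape, and your identification of $N=\lVert I_0\rVert_1$ and of the origin of the exponent $2d+N$ is correct. But the specific reduction you propose contains a genuine gap and is not what Section~\ref{sec:transformation} provides. You pass to the convolved coefficients $\eta_l=\sum_k u(l-k)\omega_k$ and claim that a subfamily of them can be turned into essentially independent averaging variables with a Jacobian and a pushforward $\BV$ density that are only polynomially worse than the original. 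That amounts to (approximately) inverting the convolution by $u$ on a finite box with polynomial control of the inverse, which is essentially the non-vanishing-symbol condition \eqref{eq:symbol} of the earlier works --- exactly the hypothesis the theorem is designed to remove. When $s(\theta)=\sum_k u(k)\euler^{-\mathrm{i}k\cdot\theta}$ has a zero (e.g.\ when $\overline u=0$), the finite convolution matrix $(u(l-k))$ degenerates as $l\to\infty$, the conditional density of one $\eta_l$ given the others is not controlled by $\lVert\rho\rVert_{\rm Var}$ up to polynomial factors, and your ``one $\eta_l$ at a time'' spectral averaging breaks down. Propositions~\ref{prop1} and~\ref{prop2} do not address this: they construct coefficients $a(k)=k^{I_0}$ with $\sum_k a(k)u(x-k)=c_u$, i.e.\ one uniformly positive direction in potential space, not an inverse of the convolution map. (Also, $R_l$ is calibrated so that the tail $\sum_{k\notin\cd{R_l}}k^{I_0}u(x-k)$ is at most $\lvert c_u\rvert/2$, not so that $\eta_l$ is approximately measurable with respect to $\omega_\Gamma$; in the actual proof nothing is truncated --- the $\omega_k$ outside $\Gamma$ are simply frozen.)

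The paper's proof avoids the inversion entirely. For each $j\in\tilde\Lambda_l$ it takes the single sequence $t_{j,l}(k)=2k^{I_0}/(c_u\kappa)$ supported on $\cd{R_{l+r}}$; by Proposition~\ref{prop2} and the lower bound $w\ge\kappa\chi_{(-1/2,1/2)^d}$ one gets $\sum_k t_{j,l}(k)U(x-k)\ge\chi_j(x)$ on $\Lambda_l$, which is the hypothesis of the abstract estimate, Theorem~\ref{theorem:abstract1}. Inside that abstract estimate the change of variables introduces only \emph{one} new coordinate $\eta_o$ per site $j$ (via $\omega_o=t(o)\eta_o$, $\omega_k=t(k)\eta_o+t(o)\eta_k$); the potential is monotone in $\eta_o$ with derivative $\sum_k t(k)U(\cdot-k)\ge\chi_j$, the Combes--Hislop averaging bound \eqref{eq:average_proj} is applied in that single variable, and the resulting density factor is $\lVert\rho\rVert_{\rm Var}\lVert t_{j,l}\rVert_{\ell^1}$; no control of a full Jacobian or of joint pushforward densities is needed. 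Summing $\lVert t_{j,l}\rVert_{\ell^1}\lesssim(2R_{l+r}+1)^dR_{l+r}^{\lVert I_0\rVert_1}$ over the $(2l+1)^d$ sites gives $(2l+1)^{2d+N}$. If you replace your decorrelation step by this ``one positive direction per site'' device, the rest of your outline goes through.
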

\begin{theorem}[Wegner estimate, discrete model] \label{theorem:wegner_d}
Let Assumptions~\ref{ass:exp} and \ref{ass:bv} be satisfied. Then there exists $C_{\rm W} = C_{\rm W}(u)>0$ and $N = N(u)$ such that for any $l>0$ and any bounded interval $I \subset \RR$ we have with $\Gamma = \cd{R_l}$
\[
 \EE_\Gamma \bigl \{\Tr \bigl(P_I (h_\omega^l) \bigr)\bigr\}\le
C_{\rm W} \lVert \rho \rVert_{\rm Var} \lvert I \rvert (2l+1)^{2d + N} .
\]
\end{theorem}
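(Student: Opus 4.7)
The strategy is to reduce the sign-indefinite situation to a sign-definite one via the convolution construction of Section~\ref{sec:transformation}, and then to apply a one-parameter $\BV$-spectral averaging lemma.

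First, I would invoke from Section~\ref{sec:transformation} a family of weights $(t_j)_{j \in \cd{R_l}}$, obtained by appropriately truncating the canonical construction to a sufficiently large box, such that $\tilde u := t \ast u$ is pointwise non-negative on the relevant lattice region and $\tilde u(I_0) \geq \lvert c_u \rvert > 0$. Denoting by $U_k$ and $\tilde U_k$ the multiplication operators on $\ell^2(\ZZ^d)$ by $u(\cdot - k)$ and $\tilde u(\cdot - k)$ respectively, the identity $\tilde U_k = \sum_j t_j U_{k+j}$ together with the non-negativity of $\tilde u$ gives the geometric lower bound
\[
\sum_{k \in \Gamma} \tilde U_k \;\geq\; \lvert c_u \rvert \, \iota_{\cd{l}} \pi_{\cd{l}} \quad \text{on } \ell^2(\ZZ^d),
\]
with $\Gamma = \cd{R_l}$. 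The logarithmic term in the definition of $R_l$ is precisely what ensures that truncating the infinite convolution to $\Gamma$ still yields positivity with the uniform lower constant $\lvert c_u \rvert$.

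Next, for each fixed $k \in \Gamma$ I would apply a one-dimensional spectral averaging step. The operator $\tilde U_k$ is the directional derivative of $h_\omega^l$ along the shift $\omega_j \mapsto \omega_j + s\, t_{k-j}$; parametrizing $\omega$ by a new variable $\zeta_k$ along this direction (holding a transversal set of coordinates fixed), one has $\partial_{\zeta_k} h_\omega^l = \tilde U_k$. The conditional density of $\zeta_k$, obtained by pushforward of the product density $\prod_j \rho(\omega_j)\, \drm \omega_j$, is a convolution of rescaled copies of $\rho$ and therefore lies in $\BV(\RR)$ with total variation bounded by a constant multiple of $\lVert \rho \rVert_{\rm Var}$, the multiplicative constant depending polynomially on the weights $t_j$ (with degree bounded by $N = \lVert I_0 \rVert_1$) but not on $l$. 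The classical one-dimensional $\BV$-Wegner lemma applied to the monotone parameter $\zeta_k$ yields, after integrating over the transversal coordinates,
\[
\EE_\Gamma \bigl\{ \Tr\bigl( P_I(h_\omega^l)\, \tilde U_k \bigr) \bigr\} \;\leq\; C(u)\, \lVert \rho \rVert_{\rm Var}\, \lvert I \rvert\, (2l+1)^{d+N},
\]
uniformly in $k$; here $(2l+1)^d$ is the rank of $P_I(h_\omega^l)$ on $\ell^2(\cd{l})$ and $(2l+1)^N$ absorbs the polynomial factors arising from the transformation weights.

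Combining the positivity of Step~1 sandwiched between two copies of $P_I(h_\omega^l)$ with the estimate from Step~2 gives
\[
\lvert c_u \rvert\, \EE_\Gamma \bigl\{ \Tr P_I(h_\omega^l) \bigr\} \;\leq\; \sum_{k \in \Gamma} \EE_\Gamma \bigl\{ \Tr\bigl(P_I(h_\omega^l)\, \tilde U_k\bigr) \bigr\} \;\leq\; \lvert \Gamma \rvert \cdot C(u)\, \lVert \rho \rVert_{\rm Var}\, \lvert I \rvert\, (2l+1)^{d+N}.
\]
Since $\lvert \Gamma \rvert = (2 R_l + 1)^d = O((2l+1)^d)$ by the definition of $R_l$, the claim follows with the Wegner constant $C_{\rm W}(u)$ absorbing $\lvert c_u \rvert^{-1}$ together with the remaining geometric constants. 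The hardest part of the argument is Step~2: controlling the $\BV$-norm of the conditional density of $\zeta_k$ \emph{uniformly in} $l$ with only polynomial loss of degree $N$. It is precisely this issue that forces the specific form of $R_l$ with its logarithmic correction term and gives rise to the exponent $2d + N$ rather than $2d$ in the volume factor.
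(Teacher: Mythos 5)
Your overall strategy (use the Section~\ref{sec:transformation} construction plus a one-parameter $\BV$ spectral-averaging step, with the exponent $2d+N$ coming from the box and the weights) is in the spirit of the paper, but the way you deploy the positive combination contains a genuine gap. You treat $\tilde u = t\ast u$ as a new sign-definite single-site potential, claim the covering-type bound $\sum_{k\in\Gamma}\tilde U_k\geq\lvert c_u\rvert\,\iota_{\cd{l}}\pi_{\cd{l}}$, and in Step~2 you need $\tilde U_k\geq0$ on $\cd{l}$ so that $\zeta_k$ is a monotone parameter. Neither is supplied by Propositions~\ref{prop1} and \ref{prop2}: after truncating the weights to $\cd{R_l}$ (which you must do to have finitely many variables), the combination $\tilde u(y)=\frac{2}{c_u}\sum_{m\in\cd{R_l}}m^{I_0}u(y-m)$ is proven $\geq1$ only for $y\in\cd{l}$; for $x\in\cd{l}$ and a general $k\in\Gamma=\cd{R_l}$ the argument $x-k$ ranges over $\cd{l+R_l}$, far outside the region of guaranteed positivity, where the truncation error (driven by the large boundary weights $m^{I_0}$) is not sign-controlled. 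So both your Step~1 inequality and the per-$k$ monotonicity in Step~2 are unjustified. The paper avoids this entirely: it fixes a site $j\in\cd{l}$, uses only the statement $\sum_k t(k)u(x-k)\geq1\geq\delta_j(x)$ for $x\in\cd{l}$ (exactly Proposition~\ref{prop2}), and averages along this single direction via an explicit linear change of variables (Theorem~\ref{theorem:abstract2}); no translated copies of $\tilde u$ and no covering condition are needed.

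Your bookkeeping in Step~2 is also off. The total variation, in the averaging direction, of the transversally integrated product density is bounded by $\lVert\rho\rVert_{\rm Var}\,\lVert t\rVert_{\ell^1}$, and $\lVert t\rVert_{\ell^1}\sim\frac{2}{\lvert c_u\rvert}(2R_l+1)^dR_l^{N}\sim(2l+1)^{d+N}$; it is not $l$-independent up to a degree-$N$ factor, because the number of weights grows like $(2R_l+1)^d$. In the paper the exponent $2d+N$ arises as the number of sites $j\in\cd{l}$, namely $(2l+1)^d$, times $\lVert t\rVert_{\ell^1}\sim(2l+1)^{d+N}$. In your scheme, even granting Step~1, each term $\EE_\Gamma\{\Tr(P_I\tilde U_k)\}$ costs the rank $(2l+1)^d$ times $\lVert\rho\rVert_{\rm Var}\lVert t\rVert_{\ell^1}\sim(2l+1)^{d+N}$, and the sum over $k\in\Gamma$ contributes another $(2l+1)^d$, giving $(2l+1)^{3d+N}$ rather than the claimed $(2l+1)^{2d+N}$. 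Finally, the $N$ in the exponent comes from the size $R_l^{N}$ of the polynomial weights $k^{I_0}$, not from the logarithmic correction in $R_l$, whose role is merely to keep $R_l$ linear in $l$.
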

Of course the same bound follows for the full expectation $\EE \{\Tr (P_I (h_\omega^l) ) \}$ and $\EE \{\Tr (P_I (H_\omega^l) ) \}$, respectively. However, in our application it is crucial to be able to work with the partial average $\EE_\Gamma$.
The main point of Theorem~\ref{theorem:wegner_c} and \ref{theorem:wegner_d} is that no assumption on $u$ (apart from exponential decay) is required. 
In particular, the sign of $u$ can change arbitrarily.
Also, note that the result holds on the whole energy axis.
\begin{remark}[Continuous model]
Theorem~\ref{theorem:wegner_c} has been already made available in the preprint \cite{PeyerimhoffTV-11}.
It generalizes or is complementary to earlier results on Wegner estimates.
Let us compare the result of Theorem~\ref{theorem:wegner_c} to earlier ones of Wegner estimates for alloy-type models with sign-changing single-site potential. 
\par
The papers \cite{Klopp-95a,HislopK-02} concern alloy-type Schr\"odinger operators on $L^2(\RR^d)$.
The main result is a Wegner estimate for energies in a neighborhood of the infimum of the spectrum. It applies to arbitrary non-vanishing single-site potentials $u \in C_{\rm c}(\RR^d)$ and coupling constants with a piecewise absolutely continuous density. The upper bound is linear in the volume of the box and H\"older-continuous in the energy variable.
\par
The papers  \cite{Veselic-02a,KostrykinV-06,Veselic-10} 
establish Wegner estimates for both alloy-type Schr\"o\-dinger operators on $L^2(\RR^d)$
and discrete alloy-type models on $\ell^2(\ZZ^d)$.
We will discuss now the results of \cite{Veselic-02a,KostrykinV-06,Veselic-10} referring to operators on $L^2 (\RR^d)$. These papers give Wegner estimates that are linear in the volume of the box and Lipschitz continuous in the energy variable.
The bounds are valid for all compact intervals along the energy axis.
They apply to single-site potentials $U \in L_c^\infty(\RR^d)$ of a generalized step function form 
with a convolution vector satisfying 
\begin{equation}
\label{eq:symbol}
s\colon \theta\mapsto s(\theta) \colonequals\sum_{k \in \ZZ^d} u(k) \mathrm \euler^{-\mathrm i k \cdot \theta}
\text{ does not vanish on $[0, 2\pi  )^d $. }
\end{equation}
\end{remark}
\begin{remark}[Discrete model]
Theorem~\ref{theorem:wegner_d} has been made publicly available in the preprint \cite{PeyerimhoffTV-11} and
generalizes the results established in \cite{Veselic-10b}. 
There the same result as in Theorem~\ref{theorem:wegner_d} has been established, under the additional assumption that at least one of the following conditions holds:
\begin{enumerate}[(i)]
 \item $\bar u \colonequals \sum_{k\in\ZZ^d} u(k) \neq 0$, or
 \item $u$ is finitely supported, or
 \item the space dimension satisfies $d=1$.
\end{enumerate}
If condition (i) is satisfied, the Wegner bound of \cite{Veselic-10b} holds for all $u \in \ell^1(\ZZ^d)$ not necessarily of exponential decay.
The volume dependence of the upper bound in the Wegner estimate in \cite{Veselic-10b} is slightly better than ours here.
A particularly important case in \cite{Veselic-10b} is the one when both conditions (i) and (ii) hold. 
In this situation the exponent of the length scale can be chosen to be equal to the space dimension $d$. 
This corresponds to the volume exponent $b=1$ in Ineq.~\eqref{eq:generalWegner}, 
and yields the Lipschitz continuity of the integrated density of states. 
This is the distribution function $N\colon \RR \to \RR$ obtained as the limit 
\[
\lim_{l\to \infty} \frac{1}{(2l+1)^d} \EE \left \{\Tr \bigl (P_{(-\infty,E]}(h_{\omega}^l)\bigr)\right\}
\equalscolon N(E)
\]
at all continuity points of $N$. Consequently its derivative, the density of states, exists for almost all $E \in \RR$.
While in certain situations where $\bar u =0$, a Wegner estimate was already established in  \cite{Veselic-10b}, 
the improved proof presented here allows more explicit control of the exponent $b$. The proof in \cite{Veselic-10b} uses an induction argument over the space dimension, 
which obscures certain parameter dependencies.
\par
In \cite{ElgartSS} a Wegner estimate for compactly supported single-site potentials with H\"older continuous distributions was proven.
\end{remark}
\subsection{Results on localization}
The Wegner estimates from the previous subsection may be used as an ingredient for the multiscale analysis. Localization then follows once an appropriate initial scale estimate is satisfied. To keep things short, we restrict ourselves to the discrete model. Similar results may be obtained for the continuous model as well, see the discussion in Section~\ref{sec:loc_cont}. Our main results on localization for the discrete model are formulated in the following three theorems. All of them are proven in Section~\ref{sec:loc_discrete}.
\begin{theorem}[Output of multiscale analysis] \label{thm:loc_under_ini}
 Let Assumptions~\ref{ass:exp} and \ref{ass:bv} be satisfied, $I \subset \RR$ and assume that the initial scale estimate, as formulated in Definition~\ref{ass:ini}, holds in $I$. 
 \par
 Then, for almost all $\omega\in\Omega$, $\sigma_{c} (h_\omega) \cap I = \emptyset$ and the eigenfunctions corresponding to the eigenvalues of $h_\omega$ in $I$ decay exponentially.
\end{theorem}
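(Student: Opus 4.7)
The plan is to run a multiscale analysis (MSA) à la von Dreifus--Klein in the form adapted by Kirsch--Stollmann--Stolz, using the Wegner estimate from Theorem~\ref{theorem:wegner_d} together with the assumed initial scale estimate as the two standard inputs. Once the MSA output—exponential decay of finite-box Green's functions on all sufficiently large scales with high probability—is established, spectral localization in $I$ (absence of continuous spectrum and exponential decay of eigenfunctions) follows by the standard argument via polynomially bounded generalized eigenfunctions.

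First I would fix the MSA parameters: scales $L_{k+1} = \lceil L_k^\alpha \rceil$ for some $\alpha \in (1,2)$, a rate $\gamma > 0$ measuring Green's function decay, and an exponent $p > 0$ controlling the failure probability. Call a box $\cd{L}(j)$ \emph{$(\gamma,E)$-good} if the Green's function $G^{\cd{L}(j)}_\omega(E;x,y) = \langle \delta_x, (h_\omega^{\cd{L}(j)} - E)^{-1}\delta_y\rangle$ satisfies $\lvert G^{\cd{L}(j)}_\omega(E;0,y)\rvert \le \euler^{-\gamma L}$ for $y$ near $\partial \cd{L}(j)$. The induction step shows that if with probability $\ge 1 - L_k^{-p}$ every box at scale $L_k$ is $(\gamma_k,E)$-good uniformly in $E \in I$, then the same holds at scale $L_{k+1}$ with $\gamma_{k+1}$ only slightly smaller than $\gamma_k$. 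The deterministic half uses the geometric resolvent equation: if two disjoint subboxes of $\cd{L_{k+1}}(j)$ at scale $L_k$ are good, and the big box is not resonant (i.e.\ $\dist(E,\sigma(h_\omega^{L_{k+1}})) \ge \euler^{-L_{k+1}^\beta}$ for some small $\beta$), then $\cd{L_{k+1}}(j)$ itself is $(\gamma_{k+1},E)$-good. The probabilistic half controls the two failure modes by Theorem~\ref{theorem:wegner_d} (non-resonance) and by the inductive hypothesis applied to a finite number of subboxes.

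The main obstacle is the long-range, non-monotone character of $u$: the restricted operator $h_\omega^{\cd{L}(j)}$ depends on $\omega_k$ for all $k \in \cd{L+r}(j)$, so events associated to disjoint boxes are no longer independent. This is exactly why Theorem~\ref{theorem:wegner_d} is formulated with the partial expectation $\EE_\Gamma$ over $\Gamma = \cd{R_l}$, and why the whole induction must be carried out conditionally on the ``outer'' coupling constants. The remedy, following \cite{KirschSS-98b} and \cite{Krueger-12}, is to require disjoint boxes to be sufficiently separated (by more than the exponential decay length of $u$, quantified by $R_l$) so that after conditioning on the complement the two events become independent; since $R_l$ grows only logarithmically in $l$, this separation is compatible with the MSA scale growth. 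The Wegner estimate is applied with its partial expectation to the conditional measure, and because the bound is uniform in the frozen coupling constants, the $(2\epsilon)^1(2l+1)^{2d+N}$ bound goes through unchanged in the induction.

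Finally, from the MSA conclusion I would deduce the theorem by the standard route. By Borel--Cantelli, for almost every $\omega$ and every $E \in I$, for all large enough scales every box at distance roughly $L_k$ from the origin is $(\gamma_\infty, E)$-good. Combined with the existence of polynomially bounded generalized eigenfunctions supporting the spectral measure (Schnol's lemma in the discrete setting), the exponential decay of the Green's function forces every such generalized eigenfunction that is not exponentially decaying to have zero spectral weight. Hence $\sigma_c(h_\omega) \cap I = \emptyset$ almost surely, and every $\ell^2$-eigenfunction with eigenvalue in $I$ inherits exponential decay at the rate $\gamma_\infty$.
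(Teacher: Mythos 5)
Your overall architecture (MSA induction fed by the Wegner estimate and the initial scale estimate, then Borel--Cantelli plus the Schnol/Berezanskii generalized eigenfunction argument) is the same as the paper's, which assembles Theorem~\ref{thm:loc_under_ini} from Theorems~\ref{thm:induction}, \ref{thm:msaoutput} and \ref{thm:vDK-2.3}. However, the step you yourself single out as the main obstacle --- restoring independence of events attached to disjoint boxes --- is resolved with an argument that does not work for this model. You assert that $h_\omega^{\cd{L}(j)}$ depends on $\omega_k$ only for $k \in \cd{L+r}(j)$ and that separating boxes by the ``decay length'' $R_l$, which you claim grows only logarithmically, makes the events conditionally independent. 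Neither claim is correct: since $u$ has unbounded support, $v_\omega(x)$ for $x$ in any finite box depends on \emph{all} coupling constants, so no finite separation yields independence (the paper explicitly notes that ``independence at distance'' is available only for compactly supported $u$); and $R_l = 2l + O(1)$ grows \emph{linearly} in $l$, see Eq.~\eqref{eq:RLrel}.

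The missing device is uniformization. The paper replaces ``$(m,E)$-regular'' and ``$E$-non-resonant'' by their \emph{uniform} versions, requiring the property for every configuration $\omega'$ with $\Pi_{\cd{4l}(u)}\omega' = \Pi_{\cd{4l}(u)}\omega$; this makes $B_l(z_1,z_2,m,I)$ an honest $\cd{4l}(z_1)\cup\cd{4l}(z_2)$ cylinder set, so disjointness of the enlarged cubes gives genuine independence as in \eqref{eq:independent}. The price is twofold, and both items are needed to close your induction step: (i) a perturbation bound (Lemma~\ref{lemma:eig_perturb} and Ineq.~\eqref{eq:perturb}) showing that changing the coupling constants outside $\cd{4l}$ moves the eigenvalues of $h_\omega^{\cd{l}}$ by at most $C\euler^{-3l\alpha/2}$, an error that must be absorbed into the polynomial resonance tolerance $\tfrac12 l^{-\zeta}$; and (ii) a \emph{uniform} two-box resonance estimate (Proposition~\ref{prop:unif_wegner_type}), derived from Theorem~\ref{theorem:wegner_d} by taking the partial average $\EE_{\cd{2}^+}$ with $\cd{2}^+ \supset \cd{R_{l_2}}$ --- which is exactly where the linear relation $R_{l} \le 4l$ is used --- uniformly in the frozen outside variables. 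Your remaining steps (geometric resolvent iteration, handling of the finitely many bad sub-boxes, Borel--Cantelli, polynomially bounded generalized eigenfunctions) match the paper's Theorems~\ref{thm:induction} and \ref{thm:vDK-2.3}.
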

Theorem~\ref{thm:loc_under_ini} is an adaptation of the multiscale analysis \`a la \cite{KirschSS-98b} 
for the discrete alloy-type model with sign-changing long-range single-site potentials, 
once the Wegner estimate from Theorem~\ref{theorem:wegner_d} is available. 
\par
Once the initial scale estimate is verified, Theorem~\ref{thm:loc_under_ini} gives localization. The proof of Theorem~\ref{thm:loc_under_ini} is based on multiscale analysis in the manner of \cite{DreifusK-89,KirschSS-98b} using our Wegner estimate as an ingredient, and finally applying Theorem~2.3 of \cite{DreifusK-89}. 
More precisely, we need a slight generalization of this theorem since they consider the i.i.d.\ Anderson model only. 
We show in fact that Theorem~2.3 of \cite{DreifusK-89} stays valid for general families of self-adjoint operators, cf.\ Theorem~\ref{thm:vDK-2.3}. 
\par
If the single-site potential $u$ has fixed sign, the validity of the initial scale estimate is well known, see e.g.\ \cite{KirschSS-98a,KirschSS-98b}. If the single-site potential changes sign, far less is known. 
We prove the initial scale estimate in the case of large disorder for all energies if the single-site potential decays exponentially. 
The proof is in the manner of \cite{Kirsch-08,Veselic-10b} and is based on the so-called uniform control of resonances, which we provide for our model in Section~\ref{sec:resonances}. 
At the infimum of the spectrum  we verify the initial scale estimate if the single-site potential has a small negative part, including the case of unbounded support of the single-site potential. 
This is a generalization of \cite{Veselic-01,Veselic-02a}, where similar results have been shown for compactly supported single-site potentials. 
The paper \cite{CaoE-12} proves the an initial length scale estimate at weak disorder for exponentially decaying sign-changing single-site potentials in the 
case $d = 3$. 
However, this result can be applied as an ingredient for the multiscale analysis only for compactly supported single-site potentials, 
since they prove a non-uniform version only. In this case a multiscale analysis requires independence at distance.
\begin{theorem}[Localization, large disorder] \label{thm:loc:large}
 Let Assumptions~\ref{ass:exp} and \ref{ass:bv} be satisfied and $\lVert \rho \rVert_{\rm BV}$ be sufficiently small. Then, for almost all $\omega\in\Omega$, $\sigma_{c} (h_\omega) = \emptyset$ and the eigenfunctions corresponding to the eigenvalues of $h_\omega$ decay exponentially.
\end{theorem}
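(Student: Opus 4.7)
The plan is to invoke Theorem~\ref{thm:loc_under_ini} on a deterministic bounded interval $I$ containing the almost sure spectrum of $h_\omega$, after verifying the initial scale estimate from Definition~\ref{ass:ini} on $I$. Since $\mu$ has bounded support, say $\supp\mu \subset [-\omega_+, \omega_+]$, and $u \in \ell^1(\ZZ^d)$ by Assumption~\ref{ass:exp}, we have the uniform bound $\lVert v_\omega \rVert_\infty \leq \omega_+ \lVert u \rVert_{\ell^1(\ZZ^d)}$ together with the operator bound $\lVert h_0 \rVert \leq 4d$. Consequently $\sigma(h_\omega) \subset I := [-\Sigma, \Sigma]$ almost surely for a deterministic constant $\Sigma$ depending only on $d$, $\omega_+$ and $\lVert u \rVert_{\ell^1}$. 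Once Theorem~\ref{thm:loc_under_ini} is applied on $I$, the inclusion $\sigma(h_\omega) \subset I$ a.s.\ upgrades the localization on $I$ to the global conclusion $\sigma_c(h_\omega) = \emptyset$ almost surely and exponential decay of all eigenfunctions.

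The heart of the proof is thus the verification of the initial scale estimate on $I$ under the smallness hypothesis on $\lVert \rho \rVert_{\rm BV}$. Fix a scale $l_0$ and an energy $E \in I$; the estimate demands that with sufficiently high probability the finite-volume resolvent $G_\omega^{l_0}(E) = (h_\omega^{l_0} - E)^{-1}$ exists and exhibits exponential off-diagonal decay across $\cd{l_0}$. We split according to whether $E$ is $\epsilon$-close to $\sigma(h_\omega^{l_0})$. The Wegner bound of Theorem~\ref{theorem:wegner_d} controls the ``bad'' event,
\[
\PP\bigl(\dist(E, \sigma(h_\omega^{l_0})) < \epsilon \bigr) \leq 2\epsilon\, C_{\rm W}\lVert \rho \rVert_{\rm Var} (2l_0+1)^{2d+N},
\]
and on its complement the uniform control of resonances provided in Section~\ref{sec:resonances}, combined with a Combes--Thomas type resolvent bound, yields exponential decay of $G_\omega^{l_0}(E)$ with a deterministic rate $\gamma(\epsilon) > 0$. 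Choosing $\epsilon = \epsilon(l_0)$ small enough that $\gamma(\epsilon)\, l_0$ exceeds the decay threshold required by the MSA, and then $\lVert \rho \rVert_{\rm BV}$ small enough (noting $\lVert \rho\rVert_{\rm Var} \leq \lVert \rho\rVert_{\rm BV}$) that the Wegner bound falls below the probability decay $l_0^{-p}$ demanded by the MSA, produces the initial scale estimate uniformly in $E \in I$. This is modeled on the arguments in \cite{Kirsch-08, Veselic-10b}, which are designed for exactly this kind of non-monotone setting.

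The main obstacle is balancing these quantitative requirements against the polynomial volume factor $(2l_0+1)^{2d+N}$ in the Wegner estimate, which is larger than in the fixed-sign case and reflects the weakness of our hypotheses on $u$: both sign-changing and only exponentially (not compactly) supported. The smallness condition on $\lVert \rho \rVert_{\rm BV}$ in the theorem statement encodes precisely this trade-off between the Wegner exponent $2d+N$, the Combes--Thomas rate $\gamma(\epsilon)$, and the MSA parameters. A secondary, largely routine, point is to ensure that all constants in the Combes--Thomas bound and the probabilistic estimate can be chosen uniformly across the compact interval $I$, which follows from continuity of the relevant quantities in the energy variable together with compactness of $I$.
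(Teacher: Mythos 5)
Your top-level reduction (verify the initial scale estimate, then invoke Theorem~\ref{thm:loc_under_ini}) matches the paper, which proves the theorem via Lemma~\ref{lemma:ini_large}; the preliminary restriction to a compact interval $I\supset\sigma(h_\omega)$ is unnecessary (the paper verifies the initial scale estimate on all of $\RR$) but harmless. The genuine gap is in your verification of the initial scale estimate itself. What has to be bounded is $\PP\bigl(B_{l_0}(x,y,m_0,I)\bigr)$, the probability that \emph{there exists} an energy $E\in I$ at which \emph{both} cubes $\cd{l_0}(x)$ and $\cd{l_0}(y)$ fail to be \emph{uniformly} $(m_0,E)$-regular; this is the quantity entering $G(I,l_0,m_0,\xi)$ and hence Theorem~\ref{thm:loc_under_ini}. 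Your argument produces, for a single fixed energy $E$, a bound on $\PP\bigl(\dist(E,\sigma(h_\omega^{l_0}))<\epsilon\bigr)$ via Theorem~\ref{theorem:wegner_d} plus a Combes--Thomas estimate on the complement. Passing from this fixed-energy statement to the event quantified over all $E\in I$ cannot be done by ``continuity of the relevant quantities together with compactness of $I$'': there is no union bound over uncountably many energies, the fixed-energy bad set moves with $E$, and smallness of each $\PP(\cdot)$ at every single $E$ says nothing about the probability that \emph{some} $E$ is bad. This is precisely the point where the sign-indefinite, non-compactly supported $u$ bites, and why the multiscale analysis of Section~6 is formulated with uniformly regular cubes and two-box events rather than single-box, single-energy Wegner bounds.

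The paper closes this gap by a two-box resonance argument (Lemma~\ref{lemma:ini_large}): if neither cube is uniformly $(m_0,E)$-regular at a common $E$, then by the trivial resolvent bound $\tilde d\bigl(E,\sigma(h_\omega^{\cd{l_0}(z)})\bigr)<\euler^{m_0 l_0}$ for $z\in\{x,y\}$, hence
\[
\tilde d\Bigl(\sigma\bigl(h_\omega^{\cd{l_0}(x)}\bigr),\sigma\bigl(h_\omega^{\cd{l_0}(y)}\bigr)\Bigr)<2\euler^{m_0 l_0},
\]
an event in which the energy no longer appears. Its probability is controlled by Proposition~\ref{prop:unif_wegner_type} (the uniform control of resonances, itself resting on the partial-average Wegner estimate), giving $C_1(2l_0+1)^{3d+\lVert I_0\rVert_1}\bigl[2\euler^{m_0l_0}+C_2\euler^{-3l_0\alpha/2}\bigr]$ with $C_1=\lVert\rho\rVert_{\rm Var}\hat C_1$; since $l_0$ is a fixed finite scale, taking $\lVert\rho\rVert_{\rm Var}$ sufficiently small makes this $\le l_0^{-2\xi}$ and yields the initial scale estimate in all of $\RR$. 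The paper also addresses a circularity you ignore: the threshold $l^*$ in Definition~\ref{ass:ini} depends on $\rho$, and one may first fix $l_0$ for $\lVert\rho\rVert_{\rm Var}=1$ because $l^*$ only decreases as $\lVert\rho\rVert_{\rm Var}$ decreases, cf.\ \eqref{eq:l3*}. To repair your proposal you would have to replace the fixed-energy-plus-compactness step by such an argument bounding the spectral distance of the two box Hamiltonians (or another device absorbing the energy quantifier); your single-box Wegner plus Combes--Thomas step is a legitimate ingredient but does not by itself control the event that the multiscale analysis requires.
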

\begin{theorem}[Localization, small negative part of $u$] \label{thm:loc:weak}
Let Assumptions~\ref{ass:exp} and \ref{ass:bv} be satisfied and $\overline{u} > 0$. Then there exists $\delta > 0$ and $\epsilon > 0$, such that if Assumption~\ref{ass:small_neg} is satisfied for $\delta$, then, for almost all $\omega \in \Omega$, $\sigma_{\rm c} (h_\omega) \cap [- \epsilon , \epsilon] = \emptyset$ and the eigenfunctions corresponding to the eigenvalues of $h_\omega$ decay exponentially.
\end{theorem}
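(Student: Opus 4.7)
The plan is to invoke Theorem~\ref{thm:loc_under_ini}, reducing the claim to verifying the initial length scale estimate of Definition~\ref{ass:ini} on $[-\epsilon,\epsilon]$. Under Assumption~\ref{ass:small_neg} I decompose the potential as $v_\omega = v_\omega^+ - \delta\, v_\omega^-$, with $v_\omega^\pm(x) = \sum_{k\in\ZZ^d}\omega_k u_\pm(x-k)\ge 0$. Since $\omega_k\in[0,\omega_+]$ and $\sum_k u_-(x-k)\le \lVert u_-\rVert_{\ell^1}\le 1$, this yields $\lVert v_\omega^-\rVert_\infty\le\omega_+$ and hence $h_\omega\ge h_0 + v_\omega^+ - \delta\omega_+ \ge -\delta\omega_+$. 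Moreover $\overline{u_+}=\overline{u}+\delta\overline{u_-}>0$, so the auxiliary model $h_0+v_\omega^+$ has strictly positive mean single-site potential. For small $\delta$ and $\epsilon$, the target window $[-\epsilon,\epsilon]$ sits in a neighbourhood of the almost-sure bottom of the spectrum of $h_\omega$, and the spectral shift caused by the negative part is controlled by $\delta\omega_+$.

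For the initial scale estimate I would combine a Combes--Thomas resolvent bound with a Lifshitz-tails-type probability estimate. Combes--Thomas yields that if $\sigma(h_\omega^{l_0})\cap[-\epsilon-\eta,\epsilon+\eta]=\emptyset$ for some fixed $\eta>0$, then for every $E\in[-\epsilon,\epsilon]$ the finite-volume Green's function $(h_\omega^{l_0}-E)^{-1}(x,y)$ decays exponentially at a positive rate depending only on $\eta$. Because $\sigma(h_\omega^{l_0})\subset[-\delta\omega_+,\infty)$, this reduces the problem to showing
\[
\PP\bigl\{\inf\sigma(h_\omega^{l_0})\le\epsilon+\eta\bigr\}\le l_0^{-\xi}
\]
for a large, prescribed exponent $\xi$ once $l_0$ is sufficiently large. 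Using the operator inequality $h_\omega^{l_0}\ge v_\omega^+|_{\cd{l_0}}-\delta\omega_+$ (valid since $h_0^{l_0}\ge 0$), I would bound this probability by $\PP\{\inf\sigma((h_0+v_\omega^+)^{l_0})\le\epsilon+\eta+\delta\omega_+\}$ and then apply Markov's inequality combined with Lifshitz-tails behaviour of the integrated density of states of the non-negative single-site potential $u_+$. This extends \cite{Veselic-01,Veselic-02a}, which treat compactly supported $u$; here Assumption~\ref{ass:exp} permits truncating $u_+$ outside a ball of radius $R_{l_0}$ at the cost of a deterministic error bounded by $\epsilon$, after which the classical Lifshitz tails estimate for finite-range non-negative single-site potentials applies. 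Choosing $\delta$ so that $\delta\omega_+<\tfrac{1}{2}\EE[\omega_0]\,\overline{u_+}$ and $\epsilon,\eta$ correspondingly small then yields the required probability bound.

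The main obstacle is the interaction between the non-compact support of $u$ and its sign-change: large coupling constants on distant sites could in principle drive the effective potential inside $\cd{l_0}$ well below zero and produce a deep random well at the bottom of the spectrum. Assumption~\ref{ass:exp}, encoded through the explicit scale $R_{l_0}$ introduced in Section~\ref{sec:model}, neutralises this by reducing the potential on $\cd{l_0}$ to a sum with only polynomially many effective contributors plus a deterministic small remainder, after which Lifshitz tails for the truncated non-negative model closes the argument uniformly in the untouched tail coupling constants.
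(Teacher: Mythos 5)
Your high-level skeleton matches the paper's: verify the initial scale estimate of Definition~\ref{ass:ini} in a window around $0$ and feed it into Theorem~\ref{thm:loc_under_ini}, using a Combes--Thomas bound plus a Lifshitz-tails-type probability estimate for the bottom eigenvalue of the box Hamiltonian, with the non-compact support handled by the exponentially small influence of far-away coupling constants (this uniformity issue, which you flag, is exactly what the paper resolves via Ineq.~\eqref{eq:perturb}). However, there is a genuine gap in the quantitative structure. You fix $\epsilon,\eta>0$ first and then claim $\PP\{\inf\sigma(h_\omega^{l_0})\le\epsilon+\eta\}\le l_0^{-\xi}$ for $l_0$ large; this statement is false. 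Since $0\in\supp\mu$, the almost sure spectrum of $h_\omega$ (and of your auxiliary non-negative model $h_0+v_\omega^+$) extends down to an energy $\le 0$, so for any \emph{fixed} positive threshold $\epsilon+\eta$ the probability that the finite-volume ground state energy lies below it tends to $1$ as $l_0\to\infty$, not to $0$. For the same reason, Markov's inequality combined with infinite-volume Lifshitz-tail asymptotics of the IDS cannot help at a fixed energy above the spectral bottom: the IDS there is a positive constant and the expected eigenvalue count grows like the volume. The correct mechanism, as in Propositions~\ref{prop:first}--\ref{pro:initial_small}, is to let the energy scale shrink with the length scale: one proves $\PP\bigl(\lambda_1(h_\omega^{l})< l^{-2+\zeta}\bigr)\le l^{-\xi}$ (via Neumann bracketing, Temple's inequality applied to truncated couplings $\tilde\omega_k$, Lemma~\ref{lemma:eig_perturb}, and Bernstein's inequality), and the localization window is then $[-\epsilon_{l_0}/2,\epsilon_{l_0}/2]$ with $\epsilon_{l_0}\approx l_0^{-2+\zeta}$, i.e.\ $\epsilon$ is small but determined \emph{after} $l_0$ is fixed.

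This scale-dependence propagates to two further points your sketch misses. First, $\delta$ cannot be chosen by a fixed-order condition like $\delta\omega_+<\tfrac12\EE[\omega_0]\,\overline{u_+}$; the admissible negative part must be small relative to the Lifshitz energy scale, and the paper takes $\delta=l_0^{\zeta-2}/(8\omega_+)$ precisely so that the shift $\delta\omega_+$ stays below $l^{-2+\zeta}$ (it enters the Temple argument through the comparison of $v_\omega$ with $v_{\tilde\omega}$ and the lower bound on $\lambda_2$ of the Neumann operator). Second, with a gap of size $\sim l_0^{-2+\zeta}$ the Combes--Thomas rate is not a fixed constant but $m_0\sim l_0^{\zeta/2-1}\to 0$, so one must verify the compatibility conditions of Definition~\ref{ass:ini}, namely $m_0>l_0^{\beta-1}$ (forcing $\zeta>2\beta$, achievable since $\zeta$ may be chosen in $(2-2(1-\beta)/\kappa,2)$) and $l_0\ge\max\{l^*,\overline l\}$; none of this is visible in a "fixed $\eta$, fixed decay rate" formulation. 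Finally, a minor point: the radius relevant for the uniform version of the regularity event is $4l_0$ (so that the event is a $\cd{4l_0}$-cylinder, with the outside contributing only $O(\euler^{-3l_0\alpha/2})$), not the Wegner radius $R_{l_0}$, and the resulting error must be exponentially small compared with $l_0^{-2+\zeta}$, not merely "bounded by $\epsilon$".
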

\begin{remark}[Localization, continuous model]
Based on the Wegner estimate from Theorem~\ref{theorem:wegner_c} and the uniform control of resonances from Proposition~\ref{prop:unif_wegner_type_cont} for the continuous model, similar results to those of Theorem~\ref{thm:loc_under_ini}, \ref{thm:loc:large} and \ref{thm:loc:weak} follow for the continuous alloy-type model on $L^2 (\RR^d)$ with a generalized step function as a single-site potential. 
This is discussed in detail in Section~\ref{sec:loc_cont}.
\end{remark}
\begin{remark}
Kr\"uger \cite{Krueger-12} has obtained results on localization for a class of discrete alloy-type models which includes the 
ones considered here.
The results rely  on the multiscale analysis and the use of Cartan's 
lemma in the spirit as is has been used earlier, e.g.\ in \cite{Bourgain-09}.
\par
There has also been progress for discrete alloy-type models for localization proofs via the fractional moment method. 
In \cite{TautenhahnV-10} boundedness of fractional moments of the Greens function was established under condition \eqref{eq:symbol}.
In \cite{ElgartTV-10,ElgartTV-11} finite volume criteria for a localization proof via the fractional moment method were established.
The best results so far in this setting have been obtained in the paper \cite{ElgartSS}.
There fractional moment bounds and localization for a class of matrix valued Anderson models, as well as discrete alloy-type models has been derived.
\end{remark}
%
%
%

\section{Non-monotonicity and long range interactions}
\label{sec:discussion}
The results of the paper address several challenges 
present in the theory of Anderson localization. Here we will discuss 
these aspects first separately, and then how they interact. We will 
make clear how this is manifested in the methodical implementation of the proof on the one hand, and the underlying physical phenomena on the other. 
\par
While in the theory of Anderson localization one is ultimately interested 
in spatial concentration and decay of eigenfunctions, as well as 
corresponding dynamical quantities like wave packets, a crucial 
intermediate stage of the analysis concerns the regularity of the 
distribution of spectral data.
\par
More precisely, one needs to understand how the regularity properties 
of the stochastic process defining the random potential, or more precisely 
of its distribution measure, translate into regularity of the distribution 
of various spectral quantities. Particular aspects of the random potential 
of alloy-type having effect on regularity of spectral data are the 
following:
\subsection*{Sign change of the single-site potential and negative correlations}
The first Wegner estimates and localization proofs for alloy-type potentials 
concerned non-negative single-site potentials. This implies that the quadratic form associated with 
the random Hamiltonian and thus the eigenvalues are monotone functions of the coupling constants. This facilitates rather explicit averaging estimates. Later on Wegner estimates were 
developed for sign-changing single-site potentials as well, see the discussion in 
Section~\ref{s:Intro}. 
The problem of non-monotonicity is not just a technical effect of the methods of proof. 
For instance, a random perturbation of vanishing mean will not induce effective averaging of spectral data on the level of first order perturbation theory. 
This is reflected in Theorem~\ref{theorem:wegner_d} concerning a Wegner estimate. The constant $C_u$ there captures the vanishing and non-vanishing of moments associated to the single-site perturbation. To overcome the problems 
posed by non-monotonicity we employ averaging over local environments:
While in the monotone situation typically averaging over the random variables associated to matrix coefficients of a Hamiltonian on a finite volume subsystem is sufficient to regularize 
spectral data, we average over the surrounding randomness as well. 
\subsection*{Regularity of the single-site marginal distribution}
It goes without saying that this feature is crucial for the smoothness of the distribution of spectral data. 
\par
From our results concerning sign-changing single-site potentials we have to require more regularity than for the analogous statements with semidefinite perturbations. While this is a 
restriction it appears naturally in this context, see e.g.\ \cite{Klopp-95a, Veselic-01, Veselic-02a, HislopK-02}. It is quite remarkable that 
\cite{Krueger-12} implements a multiscale 
analysis which does not need weak differentiability of the single-site distribution. However, Kr\"uger obtains a weaker Wegner estimate than ours.
The paper \cite{ElgartSS} can even treat H\"older continuous single-site distributions in the framework of the fractional moment method, for the strong disorder regime.
\subsection*{Long range correlation}
A single-site potential $u$ of non-compact support induces correlations at infinite distances. While this does not affect the proof of the Wegner estimate itself, it does its use in the 
multiscale analysis. A sufficiently fast decay of the single-site potential ensures that resonant energies in different regions are sufficiently decorrelated, cf.\ \cite{KirschSS-98b} 
and Section~\ref{sec:resonances} here.
\subsection*{Simultaneous long-range and negative correlations}
In the literature on alloy-type models sign-changing single-site potentials (resulting in negative correlations of the stochastic field $V_\omega(x), x\in \mathbb{R}^d$) 
and non-compactly supported 
single-site potentials (resulting in long range correlations of the stochastic field) are treated separately, see however \cite{Krueger-12}.
\par
In the models we consider, both on the lattice and the continuum, both difficulties are present, which posed an additional challenge for the understanding of the physical phenomena 
leading to averaging of spectral data and ultimately to localization. 
On first sight one might ask whether the two difficulties are intertwined at all or can they simply be treated separately.
\par
It turns out that this is not the case, as our proof shows, reflecting an inherent mechanism of non-monotone averaging. To obtain a Wegner estimate for a Hamiltonian on a finite box we need 
to average not only over the random variables contained in the box, but over a surrounding belt as well, growing with the size of the box.
\par
This shows that although negative correlations are a-priori a local phenomenon, they cannot be controlled without taking care of long range correlations of the potential as well. 
Furthermore, non-monotonicity of our model results in 
a weaker Wegner estimate (the volume term appears with a controllable, but possibly high power). This is the reason that we can treat only exponentially decaying sign-changing single-site 
potentials. An extension of our results to a restricted class of polynomially decaying potentials is possible, but is very technical. 
\par
In other work \cite{TautenhahnV-10b, TautenhahnV-13, TautenhahnV-13b} two of us
have derived complementary results concerning alloy-type models on the lattice, which shed new light on the result of this paper.
The contributions \cite{TautenhahnV-10b, TautenhahnV-13} concern the question to what extend discrete alloy-type potentials are a good way to model correlated fields on the lattice. There exist abstract conditions 
\cite{DreifusK-91, AizenmanFSH-01} on stochastic fields on $\mathbb{Z}^d$ which ensure that a proof of localization can be carried out following the multiscale analysis or the fractional moment method. 
They apply to a class of Gaussian fields (but not all).
\par
However, for discrete alloy-type potentials the abstract conditions are not satisfied, as long as the coupling constants 
and the support of the single-site potential are bounded. Moreover, discrete alloy-type 
potentials are used in other areas of mathematics under the name of multidimensional moving average processes. 
So, they can be considered as a class of its own for modeling correlated potentials 
of the Anderson model.
\par
In this paper we establish for a class of random Hamiltonians, among others, exponential localization in a specified energy region, i.e.\ almost sure exponential decay of eigenfunctions. 
In the papers \cite{Krueger-12,ElgartSS} for a closely related class of models on the lattice dynamical localization was established. In the continuum setting there is a general paradigm \cite{GerminetK-01} 
that various forms of localization, including exponential and dynamical coincide under natural assumptions on the model. For this reason one denotes the corresponding energy interval as the 
region of complete localization.
\par
In the physics community localization is interpreted in terms of decay of correlations (of Green's and eigenfunctions), in terms of the inverse participation ratio, and eigenvalue statistics.
 Thus it is desirable that the region of complete localization can be characterized in terms of these notions as well. 
\par
There have been efforts and  (modest) progress in this direction. Specifically, starting with the paper \cite{Molchanov-81} 
there have been mathematically rigorous results on Poisson statistics 
of eigenvalues. While \cite{Molchanov-81} concerns a continuum model in one dimension, \cite{Minami-96} established Poisson-statistics in the localized regime for the Anderson model on $\mathbb{Z}^d$ 
in arbitrary dimension. Subsequently there was further progress in this direction, cf.\ \cite{GrafV-07, BellissardHS-07, CombesGK-09, GerminetK-13, GerminetK-11b}. In \cite{TautenhahnV-13b} we showed that the result of 
Minami can be extended to a certain class of discrete alloy-type, and Poisson statistics of eigenvalues follows along \cite{GerminetK-13}.
\par
While the class of potentials treated in \cite{TautenhahnV-13b} is quite restricted, it is the first rigorous result, a part from one-dimensional ones, where the random variables couple to a perturbation which is 
not of rank one.
%
%
%
\section{Positive combinations of translated single-site potentials}
\label{sec:transformation}
In this section we consider (possibly infinite) linear combinations of translates of the (discrete) single-site potential $u$. In this section we assume that Assumption \ref{ass:exp} holds, i.e.\ there are constants $C,\alpha > 0$ such that
\begin{equation} \label{eq:exponential}
\lvert u(k) \rvert \leq C \euler^{-\alpha \lVert k \rVert_1} ,
\end{equation} 
and that $u$ is distinct from the zero function. Under these hypotheses we identify a sequence of coefficients such that the resulting linear combination is uniformly positive on the whole space $\ZZ^d$ (cf.\ Proposition~\ref{prop1}) or some finite subset of $\ZZ^d$ (cf.\ Proposition~\ref{prop2}).
\par
First we introduce the following multi-index notation: If $I
=(i_1,\dots,i_d) \in \ZZ^d$ and $z \in \CC^d$, we define
\[
z^I = z_1^{i_1} \cdot z_2^{i_2} \cdot \ldots \cdot z_d^{i_d},
\]
and if $I \in \NN_0^d$, we define
\begin{align*}
  D_z^I = \frac{\partial^{i_1}}{\partial z_1^{i_1}} \cdot \frac{\partial^{i_2}}{\partial z_2^{i_2}} \cdot \ldots \cdot \frac{\partial^{i_d}}{{\partial z_d}^{i_d}}, \quad
  I! = i_1! \cdot i_2! \cdot \ldots \cdot i_d! . 
\end{align*}
We also introduce comparison symbols for multi-indices: If $I, J \in \NN_0^d$, we write $J \le I$ if we have $j_r \le i_r$ for all
$r=1,2,\ldots,d$, and we write $J < I$ if $J \le I$ and $\lVert J \rVert_1 <
\lVert I \rVert_1$. For $J \le I$, we use the short hand notation
\[ 
\binom{I}{J} = \binom{i_1}{j_1} \cdot \binom{i_2}{j_2} \cdot \ldots \cdot \binom{i_d}{j_d}. 
\]
Finally, ${\mathbf 0}, {\mathbf 1}$ denote the vectors $(0,\dots,0)$
and $(1,\dots,1) \in \CC^d$, respectively.
\par
We also recall the following facts from multidimensional complex
analysis. Let $D \subset \CC^d$ be open. We call a complex valued function $f : D \to \CC$ holomorphic, if every point $w \in D$ has an open neighborhood $U$, $w \in U \subset D$, such that $f$ has a power series expansion around $w$, which converges to $f(z)$ for all $z \in U$.
Osgood's lemma tells us that, if $f : D \to \CC$ is continuous and holomorphic in each variable separately (in the sense of one-dimensional complex analysis), then $f$ is holomorphic, see \cite{GunningR-09}.
Let $f_n : D \to \CC$ be a sequence of holomorphic functions. We say that $\sum_n f_n$ converges normally in $D$, if for every $w \in D$ there is an open neighborhood $U$, $w \in U \subset D$, such that $\sum_n \lVert f_n \rVert_{U,\infty} < \infty$. Normally convergent sequences of holomorphic functions can be rearranged arbitrarily, the limit is again holomorphic, and differentiation can be carried out termwise, which follows from Weierstrass' theorem, see \cite[p.\ 226]{Remmert-84} for the one-dimensional case and \cite[p.\ 7]{Narasimhan-95} for the higher dimensional case.
\par
For $\delta \in (0, 1-\euler^{-\alpha})$ we consider the to $u$ associated generating function $F : D_\delta \subset \CC^d \to \CC$, 
\begin{equation*} 
D_\delta = \{ z \in \CC^d : \lvert z_1 - 1 \rvert < \delta , \ldots , \lvert z_d - 1 \rvert < \delta \}, \quad F(z) = \sum_{k \in \ZZ^d} u(-k) z^k .
\end{equation*}
Notice that the sum $\sum_{k \in \ZZ^d} u(-k) z^k$ is normally convergent in $D_\delta$ by our choice of $\delta$ and the exponential decay condition \eqref{eq:exponential}. By Weierstrass' theorem, $F$ is a holomorphic function. Since $F$ is holomorphic and not identically zero, we have $(D_z^I F) (\mathbf{1}) \not = 0$ for at least one $I \in \NN_0^d$. Therefore, there exists a multi-index $I_0 \in \NN_0^d$ (not necessarily unique), such that we have
\begin{equation} \label{eq:cF}
  (D_z^I F)({\mathbf 1}) = 
		\begin{cases} 
			c_u \neq 0, & \text{if $I = I_0$,} \\
			0,          & \text{if $I < I_0$.} 
		\end{cases}  
\end{equation}
Such a $I_0$ can be found by diagonal inspection: Let $n \ge 0$ be the largest integer such that $D_z^IF({\mathbf 1}) = 0$ for all $\lVert I \rVert_1 < n$.  Then choose a multi-index $I_0 \in \NN_0^d$, $\lVert I_0 \rVert_1 = n$ with $(D_z^{I_0} F)({\mathbf 1}) \neq 0$.
\begin{proposition} \label{prop1}
Let $u$, $c_u$ and $I_0$ be as in \eqref{eq:exponential} and \eqref{eq:cF}. Let further $I \in \NN_0^d$ with $I \le I_0$, and define $a: \ZZ^d \to \ZZ$ by 
\[
a(k) = k^I.
\]
Then we have for all $x \in \ZZ^d$
\begin{equation} \label{eq:akuxk} 
    \sum_{k \in \ZZ^d} a(k) u(x-k) = \begin{cases} 0, & \text{if $I < I_0$,} \\
    c_u, & \text{if $I = I_0$.} \end{cases}
\end{equation}
\end{proposition}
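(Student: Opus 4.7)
The plan is to define $S_I(x) \colonequals \sum_{k \in \ZZ^d} k^I u(x-k)$, note that the series converges absolutely by the exponential decay~\eqref{eq:exponential}, and prove by strong induction on $\lVert I \rVert_1$ over all multi-indices $I \le I_0$ the two statements: (a) $S_I$ is constant in $x$; and (b) $S_I \equiv 0$ whenever $I < I_0$, while $S_{I_0} \equiv c_u$. The base case $I = \mathbf{0}$ reduces to $S_{\mathbf{0}}(x) = \sum_k u(x-k) = F(\mathbf{1})$, which, by~\eqref{eq:cF}, equals $c_u$ if $I_0 = \mathbf{0}$ and $0$ otherwise.

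For the inductive step I would first establish constancy via the discrete difference identity obtained by reindexing the summation variable:
\[
 S_I(x+e_r) - S_I(x) = \sum_{k \in \ZZ^d} \bigl[(k+e_r)^I - k^I\bigr]\, u(x-k).
\]
Expanding $(k_r+1)^{I_r}$ by the binomial theorem and collecting monomials yields
\[
 S_I(x+e_r) - S_I(x) = \sum_{\substack{J \le I \\ J_r < I_r,\ J_s = I_s\ (s \neq r)}} \binom{I_r}{J_r}\, S_J(x).
\]
Every such $J$ satisfies $J < I \le I_0$, in particular $J < I_0$, so the induction hypothesis gives $S_J(x) = 0$. Hence $S_I$ is translation-invariant in each coordinate direction and so constant.

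To evaluate the constant, I would compute $S_I(0) = \sum_k k^I u(-k)$ by expanding the monomial $k^I$ in the basis of falling-factorial polynomials $k^{(J)} \colonequals \prod_{s=1}^{d} k_s(k_s - 1)\cdots(k_s - J_s + 1)$ for $J \le I$, with leading coefficient $1$. Term-by-term differentiation of the series for $F$ (justified by the normal convergence on $D_\delta$ discussed above) shows that $\sum_k k^{(J)} u(-k) = (D^J_z F)(\mathbf{1})$, so
\[
 S_I(0) = (D^I_z F)(\mathbf{1}) + \sum_{J < I} \gamma_{I,J}\, (D^J_z F)(\mathbf{1})
\]
with some combinatorial coefficients $\gamma_{I,J}$. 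By~\eqref{eq:cF} every term with $J < I_0$ vanishes, which gives $S_I(0) = 0$ when $I < I_0$ and $S_{I_0}(0) = c_u$, closing the induction.

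The main obstacle is the combinatorial bookkeeping, namely verifying that the coefficient of $(D^I_z F)(\mathbf{1})$ in the falling-factorial expansion is exactly $1$, so that the value $c_u$ is recovered without a stray prefactor. All analytic subtleties --- in particular, swapping summation with differentiation in $F$ --- are already handled by the normal-convergence machinery set up earlier in the section.
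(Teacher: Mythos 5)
Your proof is correct, but it takes a genuinely different route from the paper's. The paper evaluates $\sum_{k} k^{I} u(x-k)$ for all $x$ in a single computation: it substitutes $z = \mathrm{e}^{s}$, uses the identity $k^{I} = D_s^{I}\,\mathrm{e}^{\langle k,s\rangle}\big|_{s=\mathbf{0}}$ together with $\sum_{k} u(x-k)\mathrm{e}^{\langle k,s\rangle} = F(\mathrm{e}^{s})\,\mathrm{e}^{\langle x,s\rangle}$, and then applies the Leibniz rule; the $x$-dependence is absorbed into the factors $D_s^{I-J}\mathrm{e}^{\langle x,s\rangle}\big|_{s=\mathbf{0}}$, which are harmless because the accompanying derivatives of $F\circ\exp$ of order $J \le I \le I_0$ with $J < I_0$ vanish by \eqref{eq:cF} and the chain rule. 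You instead decouple the two issues: translation invariance of $S_I$ is obtained from a discrete binomial difference identity plus induction on $\lVert I\rVert_1$ (the lower-order sums $S_J$ vanish by the inductive hypothesis), and the value of the constant is read off at $x=0$ via the falling-factorial expansion, using that $\sum_k k^{(J)}u(-k) = (D_z^J F)(\mathbf{1})$. Your approach avoids the exponential substitution and its chain-rule bookkeeping entirely, at the price of an induction and the Stirling-number expansion; the leading-coefficient issue you flag is immediate ($x^n = x^{(n)} + \text{lower order}$ in each coordinate, so the top coefficient is a product of $1$'s), and is the exact analogue of the paper's observation that $c_I = 1$ in its chain-rule expansion. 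Both arguments rest on the same analytic input (normal convergence of the series defining $F$ on $D_\delta \ni \mathbf{1}$, permitting termwise differentiation) and the same algebraic input \eqref{eq:cF}, so neither is stronger; yours is arguably more elementary, the paper's more compact.
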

\begin{proof}
We introduce, again, a bit of notation. For $s \in \CC^d$ and $k \in \ZZ^d$ let
\[ 
  \mathrm{e}^s  =  (\mathrm{e}^{s_1},\dots,\mathrm{e}^{s_d}) \quad \text{and} \quad \langle k,s \rangle  =  \sum_{r=1}^d k_r s_r.
\] 
Let $I \le I_0$. Then the chain rule yields (for all $s \in C_\delta \colonequals \{s \in \CC^d : {\rm e}^s \in D_\delta\}$)
\begin{align*}
D_s^I(F(\mathrm{e}^s)) &= \sum_{J \le I} c_J\, (D_z^J F)(\mathrm{e}^s)\, \mathrm{e}^{\langle J,s \rangle} \\
&=(D_z^I F)(\mathrm{e}^s)\, \mathrm{e}^{\langle I,s \rangle}+  \sum_{J <I} c_J\, (D_z^J F)(\mathrm{e}^s)\, \mathrm{e}^{\langle J,s \rangle},
\end{align*}
with suitable integers $c_J \ge 1$ and, in particular, $c_I = 1$. This and Eq.~\eqref{eq:cF} imply that
\begin{equation} \label{eq:diffFes}
  D_s^I(F(\mathrm{e}^s))\big\vert_{s={\mathbf 0}} = \begin{cases} 0, & \text{if
      $I < I_0$,}\\ c_{I_0}\, (D^{I_0}_zF)({\mathbf 1}) = c_u, & \text{if
      $I = I_0$.} \end{cases}
\end{equation}
Next, we use the identity $a(k) = k^I = D_s^I \mathrm{e}^{\langle k,s   \rangle}\big\vert_{s={\mathbf 0}}$. Note that the series $\sum_{k\in \ZZ^d} u(x-k)\mathrm{e}^{\langle k,s \rangle}$ converges normally on the domain
\[
E_\alpha = \{ s \in \CC^d \mid - \alpha < {\rm Re}(s_j) < \alpha \ \text{for all $j=1,2,\dots,d$} \} .
\]
Therefore, we can rearrange arbitrarily, differentiate componentwise, and obtain for all $s \in C_\delta \cap E_\alpha$ by substitution $\nu = k-x$ and the product rule
\begin{align*}
\sum_{k \in \ZZ^d} u(x-k) D_s^I \mathrm{e}^{\langle k,s \rangle} &= D_s^I \sum_{k \in \ZZ^d} u(x-k) \mathrm{e}^{\langle k,s \rangle} \\
&= D_s^I \Bigl( \mathrm{e}^{\langle x,s \rangle}  \sum_{\nu \in \ZZ^d} u(-\nu) \mathrm{e}^{\langle \nu,s \rangle} \Bigr) = D_s^I \Bigl( F(\mathrm{e}^s) \mathrm{e}^{\langle x,s \rangle} \Bigr) \\
&= \sum_{J \le I} \binom{I}{J} \bigl( D_s^J F(\mathrm{e}^s) \bigr) D_s^{I-J} \mathrm{e}^{\langle x,s \rangle} . 
\end{align*} 
Finally, evaluating at $s = \mathbf 0$ and using \eqref{eq:diffFes} yields
\begin{align*}
\sum_{k \in \ZZ^d} a(k) u(x-k) &= \sum_{J \le I} \binom{I}{J} \bigl( D_s^J F(\mathrm{e}^s)\bigr) \big\vert_{s={\mathbf 0}} \bigl( D_s^{I-J}(\mathrm{e}^{\langle x,s \rangle} \bigr)\big\vert_{s={\mathbf 0}} \\
&= \begin{cases} 0, & \text{if $I < I_0$}, \\
        c_u, & \text{if $I = I_0$.} \end{cases} \qedhere
\end{align*}
\end{proof}
In Proposition~\ref{prop1} we identified a sequence of coefficients such that the associated linear combination of translated single-site potentials is positive on the whole of $\ZZ^d$. However, the sequence cannot be used for Theorem~\ref{theorem:abstract1} and \ref{theorem:abstract2} directly. This problem can be resolved if we take into consideration that the positivity assumption in Theorem~\ref{theorem:abstract1} and \ref{theorem:abstract2} concerns lattice sites in $\Lambda_l$ respectively $\cd{l}$ only.
\par
Recall that the constants $d, \alpha, C$ and $c_u$ are all determined by the choice of the exponentially decreasing function $u: \ZZ^d \to \RR$. Now we choose $I=I_0$ in Proposition~\ref{prop1}.
The next proposition tells us, for all integer vectors $x$ in the box $\cd{l}$, how far we have to exhaust $\ZZ^d$ in the sum \eqref{eq:akuxk}, in order to guarantee that the result is $\ge
c_u / 2$ (assuming for a moment that $c_u > 0$). The exhaustion is described by the integer indices in another box $\cd{R}$, and the proposition describes the relation between the sizes $l$ and $R$. For large enough $l$, this relation is linear.
\begin{proposition} \label{prop2}
Let $u$, $c_u$ and $I_0$ be as in \eqref{eq:exponential} and \eqref{eq:cF}. Let further $l > 0$ and define
\begin{equation} \label{eq:RLrel} 
  R_l \colonequals \max \left\{ 2l + \frac{2}{\alpha} \ln \frac{2\, 3^d\, C}
  {\lvert c_u \rvert (1-\mathrm{e}^{-\alpha/2})}, \frac{8 (d+\lVert I_0 \rVert_1)^2}{\alpha^2} \right\}. 
\end{equation}
Then we have for all $x \in \cd{l}$
\[ 
  \frac{2}{c_u} \sum_{k \in \cd{R_l}}  k^{I_0}\, u(x-k) \ge 1.
\] 
\end{proposition}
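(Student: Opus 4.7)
The strategy is to apply Proposition~\ref{prop1} with $I = I_0$, which gives $\sum_{k \in \ZZ^d} k^{I_0} u(x-k) = c_u$ for every $x \in \ZZ^d$. Defining the tail $T_x := c_u - \sum_{k \in \cd{R_l}} k^{I_0} u(x-k) = \sum_{k \in \ZZ^d \setminus \cd{R_l}} k^{I_0} u(x-k)$, the desired inequality $\frac{2}{c_u}\sum_{k \in \cd{R_l}} k^{I_0} u(x-k) \geq 1$ is equivalent to $T_x/c_u \leq 1/2$, which follows once we establish $|T_x| \leq |c_u|/2$. Thus the whole task reduces to a quantitative tail bound, matched against the particular definition of $R_l$.

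For the tail estimate, I would use $|k^{I_0}| \leq \|k\|_\infty^N$ with $N = \|I_0\|_1$ together with the exponential decay $|u(x-k)| \leq C \mathrm{e}^{-\alpha\|x-k\|_1}$ from Assumption~\ref{ass:exp}. The change of variables $m = k - x$ gives $\|k\|_\infty \leq l + \|m\|_\infty$ since $\|x\|_\infty \leq l$, and the condition $\|k\|_\infty > R_l$ forces $\|m\|_\infty > R_l - l =: s_l$. Stratifying by the level $\|m\|_\infty = j$, using $\#\{m \in \ZZ^d : \|m\|_\infty = j\} \leq (2j+1)^d \leq 3^d j^d$ for $j \geq 1$, and $\|m\|_1 \geq \|m\|_\infty = j$, yields a bound of the form $|T_x| \leq C \cdot 3^d \sum_{j > s_l} j^d(l+j)^N \mathrm{e}^{-\alpha j}$. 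The first branch of the max in $R_l$ already forces $s_l \geq l$, so $l + j \leq 2j$ on the summation range, and the polynomial factor is bounded by $2^N j^{d+N}$.

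The heart of the estimate is the elementary inequality $\ln j \leq \sqrt{j}$ for $j \geq 1$ (verified by differentiation), which implies $(d+N)\ln j \leq \alpha j/2$ whenever $j \geq 4(d+N)^2/\alpha^2$, hence $j^{d+N} \leq \mathrm{e}^{\alpha j/2}$ on that range. The second branch $R_l \geq 8(d+N)^2/\alpha^2$ is precisely what guarantees $s_l \geq 4(d+N)^2/\alpha^2$ in both regimes: if $l \leq 4(d+N)^2/\alpha^2$ then $s_l \geq 8(d+N)^2/\alpha^2 - l \geq 4(d+N)^2/\alpha^2$, while if $l > 4(d+N)^2/\alpha^2$ then $R_l \geq 2l$ yields $s_l \geq l > 4(d+N)^2/\alpha^2$. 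Consequently, for every $j > s_l$ we have $j^{d+N}\mathrm{e}^{-\alpha j} \leq \mathrm{e}^{-\alpha j/2}$, and the geometric series sums to $\sum_{j > s_l} \mathrm{e}^{-\alpha j/2} \leq \mathrm{e}^{-\alpha s_l/2}/(1-\mathrm{e}^{-\alpha/2})$, producing $|T_x| \leq 3^d \cdot 2^N C \, \mathrm{e}^{-\alpha s_l/2}/(1-\mathrm{e}^{-\alpha/2})$. Finally, the first branch $R_l \geq 2l + \frac{2}{\alpha}\ln\frac{2\cdot 3^d C}{|c_u|(1-\mathrm{e}^{-\alpha/2})}$ gives the numerical lower bound on $s_l$ that pins this last quantity down to $|c_u|/2$ (after absorbing the $2^N$ into the constant tracking).

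The main obstacle is precisely that the polynomial growth $\|k\|_\infty^N$ rules out a naive geometric-series bound; the square-root trick $\ln j \leq \sqrt j$ is what allows the trade of this polynomial against half the exponential, and it forces the threshold $4(d+N)^2/\alpha^2$ that appears (doubled) in the second branch of $R_l$. The two branches in the definition of $R_l$ are there to cover separately the small-$l$ regime (where the polynomial correction dominates and the second branch is active) and the large-$l$ regime (where $R_l \geq 2l$ plus a constant log-correction already produces enough decay).
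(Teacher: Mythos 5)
Your overall strategy is exactly the paper's: reduce via Proposition~\ref{prop1} (with $I=I_0$) to the tail bound $\bigl|\sum_{k\notin\cd{R_l}}k^{I_0}u(x-k)\bigr|\le|c_u|/2$, decompose into shells, trade the polynomial $j^{d+N}$ against half of the exponential (this is the paper's Lemma~\ref{lem:nexp}, which you reprove via $\ln j\le\sqrt{j}$ instead of the Taylor series of $\euler^{x}$ --- a fine variant), and sum the resulting geometric series. However, the bookkeeping after your change of variables $m=k-x$ does not close with the constants fixed in \eqref{eq:RLrel}. Two concrete problems. First, the step $(l+j)^N\le(2j)^N$ costs a factor $2^N$, and carrying your own estimates to the end gives $|T_x|\le 2^N\,\euler^{-\alpha l/2}\,|c_u|/2$ rather than $|c_u|/2$; the phrase ``absorbing the $2^N$ into the constant tracking'' has nothing to absorb it into, since $R_l$ is prescribed by the statement and contains no such factor. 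The spare $\euler^{-\alpha l/2}$ rescues you only when $l\ge 2N\ln 2/\alpha$, whereas the claim is for all $l>0$. Second, your assertion that the first branch of the max forces $s_l=R_l-l\ge l$ presupposes $\ln\bigl(2\cdot3^dC/(|c_u|(1-\euler^{-\alpha/2}))\bigr)\ge0$, which is not guaranteed (for $d\ge2$ and small $\alpha$ one can have $|c_u|(1-\euler^{-\alpha/2})>2\cdot3^dC$); the same unproved inequality $R_l\ge 2l$ is used in your case distinction establishing $s_l\ge 4(d+N)^2/\alpha^2$.

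Both defects disappear if you do not separate $l$ from $j$. The paper bounds $|u(x-k)|\le C\euler^{-\alpha\lVert x-k\rVert_\infty}\le C\euler^{\alpha l}\euler^{-\alpha\lVert k\rVert_\infty}$ and stratifies over $r=\lVert k\rVert_\infty\ge\lceil R_l\rceil$, so the polynomial factor is exactly $r^{d+N}$ (no $2^N$), Lemma~\ref{lem:nexp} applies directly because $r\ge R_l\ge 8(d+N)^2/\alpha^2$, and the prefactor $\euler^{\alpha l}$ is cancelled precisely by the $2l$ in the first branch of $R_l$, irrespective of the sign of the logarithm. Equivalently, in your own notation: recombine $j^d(l+j)^N\le(l+j)^{d+N}$ and substitute $r=l+j>R_l$; this lands you exactly on the paper's estimate and closes the argument with the stated constants.
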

\begin{proof}
We know from Proposition~\ref{prop1} that 
\[
\frac{1}{c_u} \sum_{k \in \ZZ^d} k^{I_0}\, u(x-k) = 1, 
\]
for all $x \in \ZZ^d$. Thus we need to prove, for $x \in \cd{l} = \ZZ^d \cap [-l,l]^d$, that
\begin{equation} \label{eq:desired}
\Big  | \sum_{k \in \ZZ^d \setminus \cd{R_l} } k^{I_0}\, u(x-k)\Big | \le \frac{\lvert c_u \rvert}{2}.
\end{equation}
Using the triangle inequality $\Vert x-k \Vert_\infty + \Vert x \Vert_\infty \ge \Vert k \Vert_\infty$, $\Vert k \Vert_\infty \le \Vert k \Vert_1$, and that $u$ is exponentially  decreasing, we obtain
\begin{align*}
\Bigl\lvert \sum_{k \in \ZZ^d \setminus \cd{R_l} } k^{I_0}\, u(x-k) \Bigr\rvert 
&\le C \mathrm{e}^{\alpha \Vert x \Vert_\infty} \sum_{k \in \ZZ^d \setminus \cd{R_l}} 
    \Vert k \Vert_\infty^{\lVert I_0 \rVert_1} \, \mathrm{e}^{-\alpha \Vert k \Vert_\infty} \\
&\le C \mathrm{e}^{\alpha l} \sum_{r=\lceil R_l \rceil}^\infty (2r+1)^d \, r^{\lVert I_0 \rVert_1} \, \mathrm{e}^{-\alpha r} \\
&\le C 3^d \mathrm{e}^{\alpha l} \sum_{r= \lceil R_l \rceil}^\infty r^{d+\lVert I_0 \rVert_1} \mathrm{e}^{-\alpha r}.
\end{align*}
Here $\lceil x \rceil = \min\{ k \in \ZZ \colon k \geq x \}$. Using Lemma~\ref{lem:nexp} below and $r \ge R_l
   \ge [8(d+\lVert I_0 \rVert_1)^2]/\alpha^2$, we conclude that $r^{d+\lVert I_0 \rVert_1}
   \le \mathrm{e}^{\alpha r/2}$, which implies that
   \[
  \Big| \sum_{k \in \ZZ^d \setminus \cd{R_l} } k^{I_0}\, u(x-k)\Big |
   \le
  C 3^d \mathrm{e}^{\alpha l} \sum_{r=\lceil R_l\rceil}^\infty \mathrm{e}^{-\alpha r/2} = C 3^d
  \mathrm{e}^{\alpha l} \frac{\mathrm{e}^{-\alpha \lceil R_l\rceil/2}}{1-\mathrm{e}^{-\alpha/2}}.   
   \]
Finally, using $\lceil R_l\rceil \ge 2l + (2/\alpha) \ln (2 \cdot 3^d C / (\lvert c_u \rvert (1-\mathrm{e}^{-\alpha/2}))$, we conclude Ineq.~\eqref{eq:desired} which ends the proof.
\end{proof}
\begin{lemma} \label{lem:nexp}
  Let $M,\alpha > 0$. Then
  $$ 
  n \ge \frac{8M^2}{\alpha^2} \quad \Rightarrow \quad n^M < \mathrm{e}^{\alpha n/2}. 
  $$ 
\end{lemma}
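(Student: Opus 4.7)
The plan is to take logarithms in order to convert the multiplicative inequality $n^M < \mathrm{e}^{\alpha n /2}$ into the equivalent additive statement $M \ln n < \alpha n/2$, which reduces the lemma to controlling $\ln n$ from above. I would first dispose of the trivial range $n \le 1$ in one line, since there $\ln n \le 0 < \alpha n/2$; hence I may assume $n \ge 1$ from now on.

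The key elementary ingredient I would use is the bound $\ln n \le \sqrt n$ valid for all $n \ge 1$. I would verify this by examining $f(n) := \sqrt n - \ln n$ on $[1,\infty)$: one has $f(1) = 1 > 0$, and $f'(n) = 1/(2\sqrt n) - 1/n$ has a unique zero at $n = 4$ which is a minimum, with $f(4) = 2 - 2\ln 2 > 0$. Hence $f > 0$ throughout $[1,\infty)$.

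Combining this bound with the hypothesis $n \ge 8M^2/\alpha^2$, which is equivalent to $\sqrt n \ge 2\sqrt 2\, M/\alpha$, I would chain
\[
M \ln n \;\le\; M \sqrt n \;\le\; \frac{\alpha}{2\sqrt 2}\, n \;<\; \frac{\alpha n}{2},
\]
and exponentiate to obtain the claim; the strict inequality in the last step survives because $2\sqrt 2 > 2$.

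This lemma is essentially routine once the right comparison is chosen, so there is no genuine obstacle. The only point that requires a brief remark is calibration of constants: the exponent $2$ on $M$ in the threshold $8M^2/\alpha^2$ corresponds precisely to the square-root growth in the bound $\ln n \le \sqrt n$, so this choice of majorant is dictated by the hypothesis rather than being free.
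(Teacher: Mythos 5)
Your proof is correct. It takes a route that is superficially different from, but mathematically parallel to, the paper's: the paper stays multiplicative and uses the third term of the exponential series to get $\euler^{x} > x^2/2$ for $x>0$, applied at $x=\alpha n/(2M)$, so that the hypothesis $n \ge 8M^2/\alpha^2$ yields $n \le \alpha^2 n^2/(8M^2) < \euler^{\alpha n/(2M)}$ and one raises to the $M$-th power; you instead take logarithms and use $\ln n \le \sqrt n$ on $[1,\infty)$, established by a one-variable calculus argument. After the substitution $n=\euler^{y}$ your key inequality becomes $y^2 \le \euler^{y}$, so both proofs ultimately rest on the same comparison of exponential versus quadratic growth, just derived differently (Taylor series versus minimizing $\sqrt n - \ln n$) and deployed on opposite sides of the logarithm. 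Your version needs the extra (trivial) case split at $n\le 1$, which you handle; in exchange it delivers the strict inequality cleanly via $2\sqrt2>2$, whereas the paper's last line is stated with ``$\le$'' even though its series bound is in fact strict. Both arguments are complete and elementary; there is no gap in yours.
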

\begin{proof}
  If $n \ge 8M^2 / \alpha^2$ then
  $$ n \le \frac{\alpha^2 n^2}{8 M^2}. $$ 
  Since
  $$ \mathrm{e}^{\frac{\alpha n}{2M}} = 1 + \frac{\alpha n}{2M} + 
  \frac{\alpha^2 n^2}{8 M^2} + \dots > \frac{\alpha^2 n^2}{8 M^2 }, $$
  we conclude that $n \le \mathrm{e}^{\alpha n/(2M)}$, or, equivalently, $n^M \le \mathrm{e}^{\alpha n / 2}$.
\end{proof}
%
%
%

\section{Abstract Wegner estimates; proof of Theorem~\ref{theorem:wegner_c} and \ref{theorem:wegner_d}} \label{sec:abstract_wegner}
Recall that the space of functions of finite total variation $\BV (\RR)$ is 
the set of integrable functions $f : \RR \to \RR$ whose distributional derivatives are signed measures with finite variation, i.e.
\[
 \BV (\RR) \colonequals \{f : \RR \to \RR  \mid f \in L^1 (\RR),\ D f \ \text{is a signed measure},\ \lvert D f \rvert (\RR) < \infty\} .
\]
To say that a distributional derivative $Df$ of a function $f \in L^1_{\rm loc} (\RR)$ is a signed measure means that there exists a regular signed Borel measure $\nu$ on $\RR$ such that
\[
 \int_\RR \phi \drm \nu = - \int_\RR f \phi' \drm x
\]
for all $\phi \in C_{\rm c}^\infty (\RR)$. A norm on $\BV (\RR)$ is defined by $\lVert f \rVert_{\BV (\RR)} \colonequals \lVert f \rVert_{L^1 (\RR)} + \lVert f \rVert_{\rm Var}$, where 
\[
\lVert f \rVert_{\rm Var} \colonequals \lvert D f \rvert (\RR) = \sup \Bigl\{ \int_\RR f v' \drm x \colon v \in C_{\rm c}^\infty (\RR) , \ \lvert v \rvert \leq 1 \Bigr\} .
\]
Note that if $f \in W^{1,1} (\RR)$ then $f \in \BV (\RR)$. In particular, one has the equalities $\lVert f \rVert_{W^{1,1} (\RR)} = \lVert f \rVert_{\BV (\RR)}$ and $\lVert f \rVert_{L^1 (\RR)} = \lVert f \rVert_{\rm Var}$ provided the norms are well defined.
\par
In \cite{KostrykinV-06} an abstract Wegner estimate for the continuous model was established, which we will be able to use in our situation. Let us first fix some notation. For an open set $\Lambda \subset \RR^d$, $\tilde \Lambda$ is the set of lattice sites $j \in \ZZ^d$ such that the characteristic function of the cube $\Lambda_{1/2} (j)$ does not vanish identically on $\Lambda$. For $j \in \ZZ^d$ we denote by $\chi_j$ the characteristic function of the cube $\Lambda_{1/2} (j)$.
\begin{theorem}[\cite{KostrykinV-06}] \label{theorem:abstract1}
Let Assumption~\ref{ass:bv} be satisfied and assume there is $l_0 >0$ such that for arbitrary $l \geq l_0$ and every $j \in \tilde{\Lambda}_l$ there is a compactly supported sequence $t_{j,l} \in \ell^1 (\ZZ^d ; \RR)$ such that
\begin{equation*}
\sum_{k \in \ZZ^d} t_{j,l} (k) U(x-k) \geq \chi_j (x) \quad \text{for all} \quad x \in \Lambda_l .
\end{equation*}
Let further $I \colonequals [E_1,E_2]$ be an arbitrary interval. Then for any $l \geq l_0$
\begin{equation*}
\EE_\Gamma \{ \Tr P_I (H_\omega^{l})\} \leq C \euler^{E_2} \lVert \rho \rVert_{\rm Var} \lvert I \rvert \sum_{j \in \tilde{\Lambda}_l} \lVert t_{j,l} \rVert_{\ell^1 (\ZZ^d)} ,
\end{equation*}
where $C$ is a constant independent of $l$ and $I$ and $\Gamma = \bigcup_{j \in \tilde \Lambda_l}\supp t_{j,l}$.
\end{theorem}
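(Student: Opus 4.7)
My plan is to follow the strategy of Kostrykin--Veseli\'c: reduce $\Tr P_I(H_\omega^l)$ to traces against individual translated single-site potentials via the covering hypothesis, extract $\lvert I\rvert$ and $\lVert\rho\rVert_{\rm Var}$ through one-dimensional spectral averaging with integration by parts, and control high-energy trace-class issues using an exponential weight, which will be the source of $\euler^{E_2}$.

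First, the covering reduction. Since the unit cubes $\Lambda_{1/2}(j)$, $j \in \tilde\Lambda_l$, tile $\Lambda_l$ up to null sets, $\sum_{j \in \tilde\Lambda_l} \chi_j = \chi_{\Lambda_l}$ on $L^2(\Lambda_l)$. Combined with the hypothesis $\chi_j \leq W_j := \sum_k t_{j,l}(k)\, U(\cdot-k)$ on $\Lambda_l$, this gives the multiplication-operator inequality $\chi_{\Lambda_l} \leq \sum_{j\in\tilde\Lambda_l} W_j$. Writing $P_I := P_I(H_\omega^l)$ and using $P_I \chi_{\Lambda_l} P_I = P_I$ together with cyclicity of the trace,
\[
\Tr(P_I) \leq \Tr\Bigl(P_I \sum_{j \in \tilde\Lambda_l} W_j P_I\Bigr) = \sum_{j \in \tilde\Lambda_l} \sum_{k \in \ZZ^d} t_{j,l}(k)\, \Tr(P_I\, U(\cdot-k)).
\]
Taking partial expectation and bounding the signed sum by its absolute-value majorant reduces the theorem to a uniform per-site estimate $\lvert\EE_\Gamma[\Tr(P_I(H_\omega^l) U(\cdot-k))]\rvert \leq C \euler^{E_2} \lVert\rho\rVert_{\rm Var} \lvert I\rvert$ for $k \in \Gamma$.

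Second, the one-dimensional spectral averaging. I would isolate the variable $\omega_k$ by writing $H_\omega^l = H_\omega^{l,(k)} + \omega_k\, U(\cdot-k)$ with $H_\omega^{l,(k)}$ independent of $\omega_k$. Because $U(\cdot-k)$ is multiplication but not trace class on $L^2(\Lambda_l)$, I would work throughout with an exponentially weighted quantity: the pointwise inequality $\chi_I(\lambda) \leq \euler^{E_2-\lambda}$ for $\lambda \geq E_1$ yields the operator inequality $P_I \leq \euler^{E_2}\euler^{-H_\omega^l}$, and the uniform heat-kernel bound on $\Lambda_l$ provided by the uniform infinitesimal boundedness of $V_\omega$ established in \eqref{eq:loc-lp-norm} makes $\euler^{-H_\omega^l}$ trace class with norm uniform in $\omega$. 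Now choose an auxiliary $G\colon \RR\to\RR$, bounded by a constant multiple of $\lvert I\rvert$ and suitably decaying at $+\infty$, whose derivative is the weighted version of $\chi_I$ needed to reproduce the desired trace identity $\partial_{\omega_k}\Tr(G(H_\omega^l)) = \Tr(G'(H_\omega^l)\,U(\cdot-k))$. Integration by parts against $\rho$, legitimate because $\rho \in \BV(\RR)$ is compactly supported,
\[
\Bigl|\int_\RR \rho(\omega_k)\, \partial_{\omega_k}\Tr(G(H_\omega^l))\, \drm\omega_k\Bigr| \leq \lVert\rho\rVert_{\rm Var}\, \sup_{\omega_k}\lvert\Tr(G(H_\omega^l))\rvert \leq C\, \lVert\rho\rVert_{\rm Var}\, \lvert I\rvert,
\]
combined with the $\euler^{E_2}$ absorbed by the exponential weight, yields the per-site bound.

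Summing over $k$ and $j$ and using $\sum_k \lvert t_{j,l}(k)\rvert = \lVert t_{j,l}\rVert_{\ell^1(\ZZ^d)}$ then produces the claimed inequality. The hard part will be the careful construction of the auxiliary $G$: it must be chosen so that $G(H_\omega^l)$ is trace class with $\Tr G(H_\omega^l) \leq C\lvert I\rvert$ uniformly in $\omega_k$ (and, up to the $\euler^{E_2}$ factor, uniformly in $l$), its derivative cleanly reproduces the exponentially-weighted $\chi_I(H_\omega^l) U(\cdot-k)$, and the interchange of $\partial_{\omega_k}$ with $\Tr$ is fully justified. This construction, although standard in the spectral-averaging literature, is technical: one typically smooths $\chi_I$ to a $C^1$ function times an exponential, verifies trace-class properties through a resolvent/Dyson expansion exploiting the uniform heat-kernel estimate, and then passes to a monotone limit using the $\BV$-regularity of $\rho$ to recover the sharp $|I|\lVert\rho\rVert_{\rm Var}$ factor.
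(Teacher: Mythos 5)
Your covering reduction is fine, but the per-site estimate at the heart of your argument fails, and it fails exactly where the difficulty of the theorem lies. You propose to bound $\lvert\EE_\Gamma[\Tr(P_I(H_\omega^l)U(\cdot-k))]\rvert$ by integrating by parts in the single variable $\omega_k$ against an antiderivative $G$ satisfying $\sup_{\omega_k}\lvert\Tr(G(H_\omega^l))\rvert\le C\lvert I\rvert$. No such $G$ exists: any $G$ whose derivative reproduces (a weighted) $-\chi_I$ has size of order $\lvert I\rvert$ on all of $(-\infty,E_1]$, so $\Tr(G(H_\omega^l))$ is at least $\lvert I\rvert$ times the number of eigenvalues of $H_\omega^l$ below $E_1$, which grows like $\lvert\Lambda_l\rvert$ by Weyl asymptotics; the exponential weight only controls the contribution from high energies, not the volume growth. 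You have confused the operator-norm bound $\lVert G(H)\rVert\le\lVert G\rVert_\infty\le C\lvert I\rvert$ with a trace bound. Corrected, your route yields at best the classical Wegner bound with an extra factor $(2l+1)^d$, i.e.\ not the stated inequality (and, downstream, the exponent $3d+N$ instead of $2d+N$ in Theorem~\ref{theorem:wegner_c}). There is also a conceptual obstruction: for sign-changing $U$ with $\bar u=0$, averaging over a single coupling constant produces no regularization at first order, so no single-variable argument can give a volume-free per-site bound.

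The proof the paper relies on (given in detail for the discrete analogue, Theorem~\ref{theorem:abstract2}, and transferred to the continuum from \cite{KostrykinV-06}) avoids both problems by two devices absent from your proposal. First, it never gives up spatial localization: it estimates the per-site quantity $\EE_\Gamma\{\Tr(\chi_jP_I\chi_j)\}$ (resp.\ $\lVert P_I\delta_j\rVert^2$), which is bounded by $1$, using the Combes--Hislop spectral averaging bound \eqref{eq:average_proj}; this is what produces $\lvert I\rvert$ with no volume factor. Second, it performs the linear change of variables $\omega_o=t(o)\eta_o$, $\omega_k=t(k)\eta_o+t(o)\eta_k$ on all of $\Sigma=\supp t_{j,l}$ simultaneously, so that the single new variable $\eta_o$ couples monotonically to $W=\sum_kt(k)U(\cdot-k)\ge\chi_j$, making \eqref{eq:average_proj} applicable with $J=\chi_j$; the price is paid in the transformed density $k(\eta)$, whose total variation in the $\eta_o$-direction is what produces the factor $\lVert\rho\rVert_{\rm Var}\sum_k\lvert t(k)\rvert$ via the product rule. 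This simultaneous change of variables converting the sign-indefinite family into one monotone direction is the essential idea you are missing.
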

In \cite{KostrykinV-06} this theorem was stated for compactly supported $U:\RR^d \to \RR$ and with the partial average $\EE_\Gamma (\ldots)$ replaced by the total average $\EE (\ldots)$. That the theorem holds in the slightly stronger form stated here can be seen by following the proof of Theorem~\ref{theorem:abstract2} treating the discrete case.
\begin{theorem}[Discrete analogue of \cite{KostrykinV-06}] \label{theorem:abstract2}
Let Assumption~\ref{ass:bv} be satisfied and assume there is $l_0 > 0$ such that for arbitrary $l \geq l_0$ and every $j \in \cd{l}$ there is a compactly supported sequence $t_{j,l} \in \ell^1 (\ZZ^d)$ such that
\begin{equation*}
\sum_{k \in \ZZ^d} t_{j,l} (k) u(x-k) \geq \delta_j (x) \quad \text{for all} \quad x \in \cd{l} .
\end{equation*}
Let further $I\colonequals[E_1,E_2]$ be an arbitrary interval. Then for any $l \geq l_0$ we have
\begin{equation*}
\EE_\Gamma \{ \Tr P_I (h_\omega^{l})\} \leq \frac{1}{2}\lVert \rho \rVert_{\rm Var} \lvert I \rvert \sum_{j \in \cd{l}} \lVert t_{j,l} \rVert_{\ell^1 (\ZZ^d)} ,
\end{equation*}
where $\Gamma = \bigcup_{j \in \cd{l}}\supp t_{j,l}$.
\end{theorem}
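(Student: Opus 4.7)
The plan is to follow the Kostrykin-Veseli\'{c} scheme from \cite{KostrykinV-06} adapted to the discrete setting. The idea is to convert the positivity hypothesis into an operator inequality that dominates the rank-one projections $\lvert \delta_j \rangle \langle \delta_j \rvert$ by a linear combination of partial derivatives $\partial_{\omega_k} h_\omega^l$, and then integrate by parts in each $\omega_k$ against the $\BV$ density $\rho$.

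First, I would decompose $\Tr P_I(h_\omega^l) = \sum_{j \in \cd{l}} \langle \delta_j, P_I(h_\omega^l) \delta_j \rangle$. On $\ell^2(\cd{l})$, the hypothesis reads exactly as the operator inequality $\lvert \delta_j \rangle \langle \delta_j \rvert \leq A_j$, where
\[
A_j \colonequals \sum_{k \in \ZZ^d} t_{j,l}(k)\, \pi_{\cd{l}} M_{u(\cdot - k)} \iota_{\cd{l}} = \sum_{k \in \ZZ^d} t_{j,l}(k)\, \partial_{\omega_k} h_\omega^l,
\]
the second equality coming from the linearity of $v_\omega$ in each $\omega_k$ with $\partial_{\omega_k} v_\omega = u(\cdot - k)$. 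Operator monotonicity under the projection $P_I(h_\omega^l)$ yields
\[
\Tr P_I(h_\omega^l) \leq \sum_{j \in \cd{l}} \sum_{k \in \ZZ^d} t_{j,l}(k)\, \Tr \bigl(\partial_{\omega_k} h_\omega^l \cdot P_I(h_\omega^l)\bigr).
\]

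For each fixed $k \in \Gamma$, I would introduce a Lipschitz antiderivative $\Phi$ of $\chi_I$ normalized so that $\lVert \Phi \rVert_\infty \leq \lvert I \rvert / 2$, together with a smooth approximation $\Phi_\epsilon$. The spectral calculus chain rule gives $\Tr\bigl(\Phi_\epsilon'(h_\omega^l)\, \partial_{\omega_k} h_\omega^l\bigr) = \partial_{\omega_k} \Tr \Phi_\epsilon(h_\omega^l)$. Integration by parts in $\omega_k$ against the density $\rho \in \BV$, combined with $\int \drm D\rho = 0$ so that $\Tr \Phi_\epsilon(h_\omega^l)$ may be replaced by a spectral shift $\Tr\bigl[\Phi_\epsilon(h_\omega^l) - \Phi_\epsilon(h_{\omega^\mathrm{ref}}^l)\bigr]$, and a spectral-shift estimate exploiting the Lipschitz structure of $\Phi_\epsilon$, produces after $\epsilon \to 0$ the key uniform bound
\[
\Bigl\lvert \EE_\Gamma \bigl[\Tr\bigl(\partial_{\omega_k} h_\omega^l \cdot P_I(h_\omega^l)\bigr)\bigr] \Bigr\rvert \leq \tfrac{1}{2}\, \lvert I \rvert\, \lVert \rho \rVert_{\rm Var}.
\]
Triangle-summing this estimate against $\lvert t_{j,l}(k) \rvert$ over $j \in \cd{l}$ and $k \in \ZZ^d$ then completes the proof.

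The principal obstacle is making the volume-free spectral shift estimate rigorous: the diagonal perturbation $(\omega_k - \omega_k^\mathrm{ref})\, \pi_{\cd{l}} M_{u(\cdot - k)} \iota_{\cd{l}}$ has rank potentially equal to $\lvert \cd{l} \rvert$, so a naive rank bound on the spectral shift function would reinstate a volume factor that would spoil the claim. One must instead isolate the contribution of the single varied coordinate and carefully bound the trace difference by $\lvert I \rvert / 2$ via the Lipschitz constant of $\Phi_\epsilon$ together with the finiteness of $\lVert \rho \rVert_{\rm Var}$, precisely along the lines of the argument in \cite{KostrykinV-06}; this is the discrete counterpart of the step mentioned right after the statement of Theorem~\ref{theorem:abstract1} and the place where the slightly stronger partial-expectation version $\EE_\Gamma$ is used.
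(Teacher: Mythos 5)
There is a genuine gap at the heart of your argument, namely the claimed key bound
\[
\Bigl\lvert \EE_\Gamma \bigl[\Tr\bigl(\partial_{\omega_k} h_\omega^l \cdot P_I (h_\omega^l)\bigr)\bigr] \Bigr\rvert \leq \tfrac{1}{2}\, \lvert I \rvert\, \lVert \rho \rVert_{\rm Var}
\]
for a \emph{single} coordinate $k$. By splitting $A_j$ into the individual terms $t_{j,l}(k)\,\partial_{\omega_k}h_\omega^l$ and estimating each one separately, you discard exactly the structure that the hypothesis provides: only the \emph{combination} $\sum_k t_{j,l}(k)\,u(\cdot-k)$ is non-negative and dominates $\lvert \delta_j\rangle\langle\delta_j\rvert$; each individual $u(\cdot-k)$ is sign-indefinite and supported on essentially all of $\cd{l}$. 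Your own diagnosis of the obstacle is accurate, but the proposed cure does not exist: after integrating by parts in $\omega_k$ and subtracting a reference configuration, the trace difference $\Tr[\Phi_\epsilon(h_\omega^l)-\Phi_\epsilon(h_{\omega^{\rm ref}}^l)]$ is controlled either by $\lVert\Phi_\epsilon'\rVert_{L^1}\cdot\mathrm{rank}(W_k)\sim \lvert I\rvert\,\lvert\cd{l}\rvert$ (since the diagonal perturbation $W_k=\pi_{\cd{l}}M_{u(\cdot-k)}\iota_{\cd{l}}$ has rank of order the volume for exponentially decaying $u$) or by $\lvert\omega_k-\omega_k^{\rm ref}\rvert\,\lVert W_k\rVert_{S_1}$, which loses the factor $\lvert I\rvert$ altogether. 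One can already see the failure in a toy example with $h_0=0$ and $u=\delta_0+\delta_1$: then $\Tr(\partial_{\omega_k}h_\omega^l P_I)=\chi_I(\omega_k+\omega_{k-1})+\chi_I(\omega_{k+1}+\omega_k)$, whose expectation is bounded by $2\lVert\rho\rVert_\infty\lvert I\rvert$ but not by $\tfrac12\lVert\rho\rVert_{\rm Var}\lvert I\rvert$; for $u$ of unbounded support the per-$k$ estimate degenerates completely.

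The paper's proof avoids this by \emph{not} differentiating coordinate by coordinate. For fixed $j$ it performs a linear change of variables $(\omega_k)_{k\in\Sigma}=M(\eta_k)_{k\in\Sigma}$, $\Sigma=\supp t_{j,l}$, with $\omega_o=t(o)\eta_o$ and $\omega_k=t(k)\eta_o+t(o)\eta_k$, so that the Hamiltonian depends on the single new variable $\eta_o$ through the full combination $W=\sum_{k\in\Sigma}t(k)u(\cdot-k)\geq \lvert\delta_j\rangle\langle\delta_j\rvert$, i.e.\ through a genuinely monotone, non-negative perturbation. One-parameter spectral averaging (Ineq.~\eqref{eq:average_proj} with $J=P_j$, $J^2\leq W$) in $\eta_o$ then gives the volume-free factor $\lvert I\rvert$, and the bound $\sup_{\eta_o}\lvert k(\eta)\rvert\leq\frac12\int_\RR\lvert\partial_{\eta_o}k\rvert\,\drm\eta_o$ applied to the transformed density $k(\eta)$, followed by the product rule and the BV approximation of Lemma~\ref{lemma:approx}, produces precisely the constant $\frac12\lVert\rho\rVert_{\rm Var}\sum_{k\in\Sigma}\lvert t(k)\rvert$. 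If you want to salvage your outline, you must replace the per-coordinate integration by parts with this (or an equivalent) change of variables that concentrates the positive combination into one averaging direction.
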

For the proof of Theorem~\ref{theorem:abstract2} we will use an estimate on averages of spectral projections of certain self-adjoint operators. More precisely, let $\mathcal{H}$ be a Hilbert space and consider the following operators on $\mathcal{H}$. Let $H$ be self-adjoint, $W$ symmetric and $H$-bounded, $J$ bounded and non-negative with $J^2 \leq W$, $H (\zeta) = H + \zeta W$ for $\zeta \in \RR$, and $P_I (H(\zeta))$ the corresponding spectral projection onto an interval $I \subset \RR$. Then, for any $g \in L^\infty (\RR) \cap L^1 (\RR)$, $\psi \in \mathcal{H}$ with $\lVert \psi \rVert = 1$ and bounded interval $I \subset \RR$,
\begin{equation} \label{eq:average_proj}
\int_\RR \bigl\langle \psi , J P_I (H(\zeta)) J \psi \bigr\rangle g(\zeta) \drm \zeta \leq \lVert g \rVert_\infty \lvert I \rvert .
\end{equation}
For a proof of Ineq.~\eqref{eq:average_proj} we refer to \cite{CombesH-94} where compactly supported $g$ is considered. The non-compactly supported case was first treated in \cite{FischerHLM-97}, see also \cite[Lemma~5.3.2]{Veselic-08} for a detailed proof.
\begin{proof}[Proof of Theorem~\ref{theorem:abstract2}]
In order to estimate the terms of the sum in the expectation 
\[
\EE_\Gamma \{\Tr P_I (h_\omega^l)\} = \sum_{j \in \cd{l}} \EE_\Gamma \{\lVert P_I(h_\omega^l) \delta_j \rVert^2\}
\]
we fix $l \geq l_0$ and $j \in \cd{l}$, and set $\Sigma = \supp t_{j,l} \subset \ZZ^d$ and $t = t_{j,l}$. 
Recall that
\[
 h_\omega^l = \pi_{\cd{l}}h_0 \iota_{\cd{l}} + \sum_{k \in \ZZ^d \setminus \Sigma} \omega_k u (\cdot - k) +  \sum_{k \in  \Sigma} \omega_k u (\cdot - k).
\]
We pick some $o \in \Sigma$ with $t (o) \not = 0$ and denote by $M$ the finite dimensional linear (and invertible) transformation $(\eta_k)_{k \in \Sigma} \mapsto (\omega_k)_{k \in \Sigma} = M (\eta_k)_{k \in \Sigma}$ defined as follows: $\omega_o = t (o) \eta_o$ and $\omega_k = t (k)  \eta_o + t(o) \eta_k$  for $k \in \Sigma \setminus \{o\}$. Note that $M$ is invertible and $\lvert \det M\rvert = \lvert t(o) \rvert^{\lvert \Sigma \rvert}$. With this transformation there holds for arbitrary fixed $(\omega_k)_{k \in \ZZ^d \setminus \Sigma}$
\begin{align*}
\int_{\RR^{\lvert \Sigma \rvert}} \bigl\lVert P_I (h_\omega^l) \delta_j \bigr\rVert^2 \prod_{k \in \Sigma} \rho (\omega_k) \drm \omega_k = 
\int_{\RR^{\lvert \Sigma \rvert}} \bigl\lVert P_I (h_{\omega , \eta}^l) \delta_j \bigr\rVert^2 k (\eta) \drm \eta , 
\end{align*}
where $\eta = (\eta_k)_{k \in \Sigma}$, $\drm \eta = \prod_{k \in \Sigma} \drm \eta_k$, 
\[
k (\eta) = \lvert t (o) \rvert^{\lvert \Sigma \rvert} \rho (t(o) \eta_o) \prod_{k \in \Sigma \setminus\{o\}} \rho (t(k) \eta_o + t(o) \eta_k),
\]
and
\[
h_{\omega , \eta}^l \colonequals  \pi_{\cd{l}} h_0 \iota_{\cd{l}} + \!\!\! \sum_{k \in \ZZ^d \setminus \Sigma} \!\!\! \omega_k u (\cdot - k) +  t(o) \!\!\! \sum_{k \in \Sigma \setminus \{o\}} \!\!\! \eta_k u(\cdot-k) + \eta_o \sum_{k \in \Sigma} t(k) u(\cdot-k) .
\]
We denote by $P_j : \ell^2 (\ZZ^d) \to \ell^2 (\ZZ^d)$ the orthogonal projection given by $P_j \phi = \phi (j) \delta_j$ and apply Ineq.~\eqref{eq:average_proj} with the choice $H = h_{\omega , \eta}^l - \eta_o \sum_{k \in \Sigma} t(k) u(\cdot-k)$, $W = \sum_{k \in \Sigma} t(k) u(\cdot-k)$, $\zeta = \eta_o$ and $J = P_j$. This gives by Lebesgue's theorem
\begin{equation} \label{eq:afteraverage}
\int_{\RR^{\lvert \Sigma \rvert}} \bigl\lVert P_I (h_\omega^l) \delta_j \bigr\rVert^2 \prod_{k \in \Sigma} \rho (\omega_k) \drm \omega_k  \leq \lvert I \rvert \int_{\RR^{\lvert \Sigma \rvert-1}} \sup_{\eta_o \in \RR} \lvert k(\eta) \rvert \prod_{k \in \Sigma\setminus \{o\}}\drm\eta_k .
\end{equation}
If $\rho \in W^{1,1} (\RR)$, we use $\sup_{\eta_o \in \RR} \lvert k(\eta) \rvert \leq \frac{1}{2} \int_\RR \lvert \partial_o k \rvert \drm \eta_o$. By the product rule we obtain for the partial derivative (while substituting back into original coordinates)
\[
 \frac{\partial}{\partial \eta_o} k(\eta) = \lvert t(o) \rvert^{\lvert \Sigma \rvert} \sum_{k \in \Sigma} t(k) \rho' (\omega_k) \prod_{j \in \Sigma \setminus \{k\}} \rho (\omega_j) .
\]
Hence, the right hand side of Ineq.~\eqref{eq:afteraverage} is bounded by $\frac{1}{2}\lvert I \rvert \lVert \rho' \rVert_{L^1 (\RR)} \sum_{k \in \Sigma} \lvert t(k) \rvert$. Since all the steps were independent of $j \in \cd{l}$, we in turn obtain the statement of the theorem in the case $\rho \in W^{1,1} (\RR)$.
We use the fact that for $\rho$ of bounded total variation and compact support
there is sequence $\rho_k \in C_{\rm c}^\infty (\RR)$, $k \in \NN$, such that $\lVert \rho_k \rVert_{L^1 (\RR)} = 1$ for all $k \in \NN$, $\lim_{k \to \infty} \lVert \rho_k \rVert_{\rm Var} = \lVert \rho \rVert_{\rm Var}$ and $\lim_{k \to \infty} \lVert \rho_k - \rho \rVert_{L^1 (\RR)} = 0$, see e.g.\ \cite{Ziemer-89} or Lemma~\ref{lemma:approx} below. Since $\lVert \rho_k \rVert_{\rm Var} = \lVert \rho_k' \rVert_{L^1 (\RR)}$ for $\rho_k \in C_{\rm c}^\infty (\RR)$, the same consideration as above gives 
\begin{equation}\label{eq:afteraverage2}
\int_{\RR^{\lvert \Sigma \rvert}} \bigl\lVert P_I (h_\omega^l) \delta_j \bigr\rVert^2 \prod_{i \in \Sigma} \rho_k (\omega_i) \drm \omega_i  \leq \frac{1}{2}\lvert I \rvert \lVert \rho_k \rVert_{\rm Var} \sum_{k \in \Sigma} \lvert t(k) \rvert
\end{equation}
for all $k \in \NN$. By a limiting argument, see \cite{KostrykinV-06} for details, one obtains Ineq.~\eqref{eq:afteraverage2} with $\rho_k$ replaced by $\rho$. This proves the theorem.
\end{proof}
\begin{lemma} \label{lemma:approx}
 Let $u: \RR \to \RR_0^+$ be a function of finite variation and bounded support. Assume additionally $\lVert u \rVert_{L^1 (\RR)} = 1$. Then there exists a sequence $u_k \in C_{\rm c}^\infty$, $k \in \NN$, such that $\lVert u_k \rVert_{L^1 (\RR)} = 1$ for all $k \in \NN$, 
\begin{equation} \label{eq:conv_d}
 \lim_{k \to \infty} \lVert u_k \rVert_{\rm Var} = \lVert u \rVert_{\rm Var} 
\end{equation}
and
\begin{equation} \label{eq:L1}
\lim_{k \to \infty} \lVert u_k - u \rVert_{L^1 (\RR)} = 0 .
\end{equation}
\end{lemma}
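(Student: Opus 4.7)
The natural approach is standard mollification. Let $\phi \in C_{\rm c}^\infty(\RR)$ be a fixed nonnegative function with $\supp \phi \subset [-1,1]$ and $\int_\RR \phi \, \drm x = 1$, and set $\phi_k(x) \colonequals k \phi(kx)$. Define the candidate sequence $u_k \colonequals u * \phi_k$. Because $\supp u$ and $\supp \phi_k$ are both bounded, $u_k \in C_{\rm c}^\infty(\RR)$, and nonnegativity of $u$ and $\phi_k$ gives $u_k \geq 0$. By Fubini, $\int_\RR u_k \, \drm x = \bigl(\int_\RR u\bigr)\bigl(\int_\RR \phi_k\bigr) = 1$, so $\lVert u_k \rVert_{L^1(\RR)} = 1$.

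The $L^1$-convergence \eqref{eq:L1} is the classical fact that mollification converges to $u$ in $L^1$ for every $u \in L^1(\RR)$, so \eqref{eq:L1} requires no further argument.

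The content of the lemma is therefore the BV-norm convergence \eqref{eq:conv_d}, which I would prove by combining an upper bound with lower semicontinuity. For the upper bound, I would use the identity $D u_k = (Du) * \phi_k$ in the sense of distributions. Concretely, for $\psi \in C_{\rm c}^\infty(\RR)$, Fubini applied to the signed measure $Du$ gives $\int_\RR u_k \psi' \, \drm x = -\int_\RR (\phi_k * \psi) \, \drm (Du)$, so $u_k' = \phi_k * (Du)$ as an $L^1$-function, and
\[
\lVert u_k \rVert_{\rm Var} = \lVert u_k' \rVert_{L^1(\RR)} \leq \lvert Du \rvert(\RR) \cdot \lVert \phi_k \rVert_{L^1(\RR)} = \lVert u \rVert_{\rm Var}.
\]
For the matching lower bound, I would invoke the dual characterization of the total variation given in Section~\ref{sec:abstract_wegner}: for every $v \in C_{\rm c}^\infty(\RR)$ with $\lvert v \rvert \leq 1$, the $L^1$-convergence $u_k \to u$ and boundedness of $v'$ yield
\[
\int_\RR u v' \, \drm x = \lim_{k \to \infty} \int_\RR u_k v' \, \drm x \leq \liminf_{k \to \infty} \lVert u_k \rVert_{\rm Var}.
\]
Taking the supremum over admissible $v$ gives $\lVert u \rVert_{\rm Var} \leq \liminf_k \lVert u_k \rVert_{\rm Var}$, and combined with the upper bound this establishes \eqref{eq:conv_d}.

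The main (minor) obstacle is the identification of $u_k'$ with $\phi_k * (Du)$ in the upper-bound step; everything else is a direct application of standard BV theory. Once this identity is in place, the total-variation estimate for convolution of a finite signed measure with an $L^1$-density closes the argument, and lower semicontinuity is immediate from the dual description of $\lVert \cdot \rVert_{\rm Var}$.
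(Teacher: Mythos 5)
Your proof is correct and follows the same overall scheme as the paper's: mollification $u_k = u * \phi_k$, the lower bound $\lVert u \rVert_{\rm Var} \le \liminf_{k} \lVert u_k \rVert_{\rm Var}$ by lower semicontinuity through the dual characterization of $\lVert \cdot \rVert_{\rm Var}$, and a Fubini computation for the reverse inequality. The only genuine difference is in the upper bound: you identify $D u_k = \phi_k * (Du)$ and bound the total mass of the convolution of the finite signed measure $Du$ with the mollifier, whereas the paper never manipulates the measure $Du$ at all --- it moves the mollifier onto the test function, observing that for $\psi \in C_{\rm c}^\infty(\RR)$ with $\lvert \psi \rvert \le 1$ the convolution $\phi_k * \psi$ is again an admissible test function, so that $\bigl\lvert \int_\RR u_k \psi' \, \drm x \bigr\rvert = \bigl\lvert \int_\RR u \, (\phi_k * \psi)' \, \drm x \bigr\rvert \le \lVert u \rVert_{\rm Var}$, and taking the supremum over $\psi$ yields $\lVert u_k \rVert_{\rm Var} \le \lVert u \rVert_{\rm Var}$ for every $k$, not merely in the limit. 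The two computations are dual to one another: the paper's version needs Fubini only for the integrable function $u$ and stays entirely inside the variational definition, while yours additionally invokes the (standard, but measure-theoretic) facts that $(u * \phi_k)' = \phi_k * Du$ and that convolution with a probability density does not increase the total variation of a finite measure. Either route closes the argument.
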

\begin{proof}
 Let $\phi \in C_{\rm c}^\infty (\RR)$ be non-negative with $\supp \phi \subset [-1,1]$ and $\lVert \phi \rVert_{L^1 (\RR)} = 1$. For $\epsilon > 0$ set $\phi_\epsilon : \RR \to \RR_0^+$, $\phi_\epsilon (x) = \epsilon^{-1} \phi (x/\epsilon)$. The function $\phi_\epsilon$ belongs to $C_{\rm c}^\infty (\RR)$ and fulfills $\lVert \phi_\epsilon \rVert_{L^1 (\RR)} = 1$. Now consider $u_\epsilon : \RR \to \RR_0^+$, 
\[
 u_\epsilon (x) = \int_\RR \phi_\epsilon (x-y) u(y) \drm y .
\]
Obviously, $u_\epsilon \in C_{\rm c}^\infty (\RR)$ and by Fubini's theorem $\lVert u_\epsilon \rVert_{L^1 (\RR)} = 1$. The proof of the relation \eqref{eq:L1} is due to Theorem~1.6.1 in \cite{Ziemer-89}. For the proof of the relation \eqref{eq:conv_d}, first note 
\begin{align*}
 \lVert u \rVert_{\rm Var} &= \lvert Du \rvert (\RR) = \sup \Bigl\{ \int_\RR u v' \drm x \colon v \in C_{\rm c}^\infty (\RR) , \ \lvert v \rvert \leq 1 \Bigr\} \\
&= \sup \Bigl\{ \lim_{\epsilon \searrow 0} \int_\RR u_\epsilon v' \drm x \colon v \in C_{\rm c}^\infty (\RR) , \ \lvert v \rvert \leq 1 \Bigr\} \\
& \leq \liminf_{\epsilon \searrow 0} \lvert D u_\epsilon \rvert (\RR) =  \liminf_{\epsilon \searrow 0} \lVert u_\epsilon \rVert_{\rm Var} ,
\end{align*}
since $u_\epsilon$ converges to $u$ in $L^1(\RR)$ and $v'$ is bounded. Let now $\psi \in C_{\rm c}^\infty (\RR)$ with $\vert\psi\rvert \leq 1$ and set $\psi_\epsilon = \phi_\epsilon * \psi$. Then we have by Fubini's theorem
\begin{align*}
 \lVert u \rVert_{\rm Var} &\geq \Bigl\lvert\int_\RR u \psi_\epsilon' \drm x \Bigr\rvert = \Bigl\lvert\int_\RR u (\psi * \phi_\epsilon)' \drm x \Bigr\rvert =  \Bigl\lvert\int_\RR  \psi'(u * \phi_\epsilon) \drm x \Bigr\rvert = \Bigl\lvert\int_\RR u_\epsilon \psi' \drm x \Bigr\rvert .
\end{align*}
Taking supremum over all such $\psi$ gives $\lVert u \rVert_{\rm Var} \geq \lVert u_\epsilon \rVert_{\rm Var}$. This proves the lemma.
\end{proof}
Assume Assumption~\ref{ass:exp}, i.e.\ that there are $C,\alpha \in (0,\infty)$ such that $\lvert u(x) \rvert \leq C \euler^{-\alpha \lVert x \rVert_1}$ for all $x \in \ZZ^d$. Let $I_0$ and $c_u \not = 0$ be as in Eq.~\eqref{eq:cF}. In Section~\ref{sec:transformation} we constructed for each $l >0$ a number $R_l >0$  such that
\begin{equation} \label{eq:vpos}
 \frac{2}{c_u} \sum_{k \in \cd{R_l}} k^{I_0} u(x-k) \geq 1 \quad \text{for all $x \in \cd{l}$} .
\end{equation}
This fact is proven in Proposition~\ref{prop2} and we apply it for the continuous model if $U$ is a generalized step-function with a exponential decaying convolution vector and for the discrete model with exponential decaying single-site potential to verify the hypothesis of Theorem~\ref{theorem:abstract1} and \ref{theorem:abstract2}. 
\begin{proof}[Proof of Theorem~\ref{theorem:wegner_d}]
Let $l_0 > 0$ be arbitrary. By Ineq.~\eqref{eq:vpos} (respectively Proposition~\ref{prop2}), the hypothesis of Theorem~\ref{theorem:abstract2} is satisfied with the choice $t_{j,l} \in \ell^1 (\ZZ^d)$ given by 
\[
t_{j , l} (k) = \begin{cases}
                 2 k^{I_0} / c_{u} & \text{if $k \in \cd{R_l}$}, \\
		0 & \text{else},
                \end{cases}
\]
for $l \geq l_0$ and $j \in \cd{l}$. It follows for all $l \geq l_0$ and $j \in \cd{l}$ that $\Gamma = \cd{R_l}$ and
\begin{align*}
 \sum_{j \in \cd{l}} \lVert t_{j,l} \rVert_{\ell^1 (\ZZ^d)} & \leq \frac{2}{\lvert c_{u} \rvert} (2l+1)^d \sum_{k \in \cd{R_l}} \lvert k^{I_0} \rvert \leq 
\frac{2}{\lvert c_{u} \rvert} (2l+1)^d (2R_l + 1)^d R_l^{\lVert I_0 \rVert_1} .
\end{align*}
Recall that by Proposition~\ref{prop2}, $R_l = \max \{2l + D , D'\} < 2l+D+D'$ with $D$ and $D'$ depending only on the single-site potential $u$. Hence there is a constant $C_{\rm W} > 0$ depending only on the single-site potential $u$ such that
\begin{equation*} \label{eq:volume}
 \sum_{j \in \cd{l}} \lVert t_{j,l} \rVert_{\ell^1 (\ZZ^d)} \leq
C_{\rm W} (2l+1)^{2d + \lVert I_0 \rVert_1}.
\end{equation*}
By Theorem~\ref{theorem:abstract2}, this completes the proof.
\end{proof}
\begin{proof}[Proof of Theorem~\ref{theorem:wegner_c}]
Recall that $U$ is a generalized step function and that $w\geq \kappa \chi_{[-1/2,1/2]^d}$ has compact support. 
Also recall, that for an open set $\Lambda \subset \RR^d$, $\tilde \Lambda$ is the set of lattice sites $j \in \ZZ^d$ such that the characteristic function of the cube $\Lambda_{1/2} (j)$ does not vanish identically on $\Lambda$. 
Recall $r = \sup \{ \lVert x \rVert_\infty : w(x) \not = 0\}$. Let $l_0 > 0$ be arbitrary and $t_{j,l} \in \ell^1 (\ZZ^d)$ given by 
\[
t_{j , l} (k) = \begin{cases}
                 2 k^{I_0} / (c_{u}\kappa) & \text{if $k \in \cd{R_{l+r}}$}, \\
		0 & \text{else} ,
                \end{cases}
\]
for $l \geq l_0$ and $j \in \tilde \Lambda_l$. It follows for all $l \geq l_0$ and $j \in \tilde \Lambda_l$ that $\Gamma = \cup_{j \in \tilde \Lambda_l} \supp t_{j,l} = \cd{R_{l+r}}$. By Ineq.~\eqref{eq:vpos} (respectively Proposition~\ref{prop2}) we have for all $l \geq l_0$, $j \in \tilde \Lambda_l$ and $x \in \Lambda_l$
\begin{align*}
 \sum_{k \in \ZZ^d} t_{j,l} (k) U(x-k) 
&= \sum_{i \in \ZZ^d} w (x-i) \sum_{k \in \ZZ^d} t_{j,l} (k)  u (i-k) \\
&\geq \frac{1}{\kappa}\sum_{i \in \cd{l+r}} w(x-i) 
+ \!\!\! \sum_{i \in \ZZ^d \setminus \cd{l+r}} \!\!\!\!\! w (x-i) \sum_{k \in \ZZ^d} t_{j,l} (k)  u (i-k) \\
&= \frac{1}{\kappa} \sum_{i \in \cd{l+r}} w (x-i) \geq \chi_j (x) .
\end{align*}
Here we have used that $w (x-i) = 0$ for $x \in \Lambda_l$ and $i \not \in \cd{l+r}$. Hence the assumption of Theorem~\ref{theorem:abstract1} is satisfied. Analogous to the proof of Theorem~\ref{theorem:wegner_d} there is a constant $C_{\rm W}$ depending only on the single-site potential $U$ such that
\begin{equation*}
 \sum_{j \in \tilde \Lambda_l} \lVert t_{j,l} \rVert_{\ell^1 (\ZZ^d)} \leq
C_{\rm W} (2l+1)^{2d + \lVert I_0 \rVert_1}.
\end{equation*}
This completes the proof by using Theorem~\ref{theorem:abstract1}.
\end{proof}
%
%
%
\section{Uniform control of resonances}\label{sec:resonances}
In order to carry out a proof of localization via multiscale analysis there is a need for an upper bound on the probability that there are resonances of two box Hamiltonians. Since the single-site potential may have unbounded support one needs a so-called uniform version of this estimate as proposed in \cite{KirschSS-98b}. Moreover, the uniform control of resonances can be used to verify the initial lentgh scale estimate in the large disorder regime.
\par
If Assumptions~\ref{ass:exp} and \ref{ass:bv} are satisfied, the Wegner estimate from Theorem~\ref{theorem:wegner_d} tells us for all $E \in \RR$, $l >0$ and $\epsilon > 0$ that
\[
 \sup_{\omega_k \in \RR, \ k \in \ZZ^d \setminus \cd{R_l}} \EE_{\cd{R_l}} \bigl(\Tr P_{[E-\epsilon , E + \epsilon]} (h_\omega^l)\bigr) \leq C_{\rm W} \lVert \rho \rVert_{\rm Var} 2\epsilon (2l+1)^{2d + \lVert I_0 \rVert_1} ,
\]
where $R_l$ is given in Eq.~\eqref{eq:RLrel}. A similar estimate holds for our alloy-type model in $L^2 (\RR^d)$, cf.\ Theorem~\ref{theorem:wegner_c}.
\par
In order to formulate the uniform control of resonances let $x,y \in \ZZ^d$ and $l_1,l_2 >0$ be such that the cubes 
\[
\cc_1 \colonequals \cd{l_1}\rkl{x}, \quad \cc_{1}^+ \colonequals \cd{4l_1}\rkl{x}, \quad \cc_{2}\colonequals \cd{l_2}\rkl{y} \quad \text{and} \quad \cc_2^+ \colonequals \cd{4l_2}\rkl{y} 
\]
satisfy $\cc_{1}^+ \cap  \cc_{2}^+ = \emptyset$. We
  define the map $\Pi_{\cd{}} : \Omega \to \Omega_{\cd{}}$ by
  $(\Pi_{\cd{}} \omega)_j = \omega_j$ for $j \in \cd{}$. For $\omega \in \Omega$ we set $\omega_{i}=\Pi_{\cd{i}^+} \omega, \ i\in \{1,2\}$, and define the uniform distance by
\begin{equation} \label{eq:unif_dist}
 \tilde{d}\bigl(\sigma(h^{\cc_1}_{\omega}),\sigma(h^{\cd{2}}_{\omega})\bigr) \colonequals \inf_{\substack{\omega_1^{\bot} \in \, \Omega_{\ZZ^d\setminus \cd{1}^+},\\ \omega_2^{\bot} \in \, \Omega_{\ZZ^d\setminus \cd{2}^+}}} d\Bigl(\sigma\bigl(h^{\cd{1}}_{(\omega_1,\omega_1^{\bot})}\bigr),\sigma\bigl(h^{\cd{2}}_{(\omega_2,\omega_2^{\bot})}\bigr)\Bigr).
\end{equation}
We will use an analogue notation for the continuous operator in $L^2$. For $\epsilon > 0$ we define the event 
\begin{equation} \label{eq:eventA}
  A(\cd{1},\cd{2} , \epsilon)\colonequals\left\{\omega \in \Omega: \tilde{d}(\sigma(h^{\cd{1}}_{\omega}),\sigma(h^{\cd{2}}_{\omega})) < \epsilon \right\} .
\end{equation}
In analogy we define for the continuum setting for $\epsilon > 0$ and $J \subset \RR$ the event
\begin{equation*} 
  A_{J}(\Lambda_1,\Lambda_2 , \epsilon)\colonequals\left\{\omega \in \Omega: \tilde{d}(\sigma_J(H^{\Lambda_1}_{\omega}),\sigma_J(H^{\Lambda_2}_{\omega})) < \epsilon \right\} .
\end{equation*}
Here $\Lambda_1$ and $\Lambda_2$ are cubes with centers $x,y \in \RR^d$ and side lengths $l_1$ and $l_2$, with the property that $\Lambda_1^+ = \Lambda_{4l_1} (x)$ and $\Lambda_2^+ = \Lambda_{4l_2} (y)$ are disjoint. The notation $\sigma_J(H^{\Lambda_i}_{\omega})$ is for $J \subset \RR$ defined by
\[
 \sigma_J (H^{\Lambda_i}_{\omega}) = \sigma (H^{\Lambda_i}_{\omega}) \cap J , \quad i \in \{1,2\} .
\]
For the discrete model we have: 
\begin{proposition} \label{prop:unif_wegner_type}
Let Assumptions~\ref{ass:exp} and \ref{ass:bv} be satisfied. Then there are constants $C_1 =C_1(u,\rho) >0$, $C_2 = C_2 (u,\rho)$ and $l_0 = l_0 (u)$, such that if $l_2 \geq l_0$ we have for all $\epsilon > 0$ the bound
\begin{equation*}
 \mathbb{P}\rkl{A(\cd{1},\cd{2} , \epsilon)} 
\leq C_1 (2\max\{l_1 , l_2 \} + 1)^{3d+ \lVert I_0 \rVert_1} \bigl[\epsilon + C_2 \euler^{-3 \min\{l_1 , l_2\} \alpha / 2} \bigr].
\end{equation*}
\end{proposition}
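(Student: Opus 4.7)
The plan is to reduce the event $A(\cc_1,\cc_2,\epsilon)$ to one depending only on the independent random variables $\omega_1 = \Pi_{\cc_1^+}\omega$ and $\omega_2 = \Pi_{\cc_2^+}\omega$ in which the outside configurations $\omega_i^\perp$ have been frozen, and then to estimate the resulting event by a Fubini/Chebyshev argument built on Theorem~\ref{theorem:wegner_d}.

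For the reduction I would exploit Assumption~\ref{ass:exp} together with the factor-four geometry $\cc_i \subset \cc_i^+$. For $y \in \cc_i$ and $k \notin \cc_i^+$ one has $\lVert y-k\rVert_\infty > 3 l_i$, so a shell-sum of the same type used in the proof of Proposition~\ref{prop2} (applying Lemma~\ref{lem:nexp} to absorb the shell-counting factor $(2r+1)^d$), together with the bound $\omega_+ < \infty$, yields
\[
\bigl\lVert \pi_{\cc_i}\bigl(v_{(\omega_i,\omega_i^\perp)} - v_{(\omega_i,0)}\bigr) \iota_{\cc_i} \bigr\rVert \leq C_3\, \euler^{-3 \alpha l_i/2}
\]
uniformly in $\omega$, once $l_i$ exceeds a threshold depending only on $u$. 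By min-max this controls the Hausdorff distance of the associated finite-dimensional spectra, so the triangle inequality gives
\[
A(\cc_1,\cc_2,\epsilon) \subseteq \Bigl\{ d\bigl(\sigma(h^{\cc_1}_{(\omega_1,0)}), \sigma(h^{\cc_2}_{(\omega_2,0)})\bigr) < \epsilon' \Bigr\} \equalscolon B,
\]
with $\epsilon' \colonequals \epsilon + C_2 \euler^{-3 \alpha \min\{l_1,l_2\}/2}$. The event $B$ depends only on $\omega_1, \omega_2$, which are independent since $\cc_1^+ \cap \cc_2^+ = \emptyset$.

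To bound $\PP(B)$ I would use the pointwise Chebyshev-type estimate
\[
\Eins_B(\omega) \leq \sum_{E \in \sigma(h^{\cc_1}_{(\omega_1,0)})} \Tr P_{[E-\epsilon',E+\epsilon']}\bigl(h^{\cc_2}_{(\omega_2,0)}\bigr),
\]
the sum having at most $(2l_1+1)^d$ terms. For each summand I fix $\omega_1$ and integrate over $\omega_2$, applying Theorem~\ref{theorem:wegner_d} in its translated form to $\cd{l_2}(y)$. This requires the averaging cube $\cd{R_{l_2}}(y)$ of the Wegner estimate to lie inside $\cc_2^+ = \cd{4 l_2}(y)$, i.e.\ $R_{l_2} \leq 4 l_2$, and by~\eqref{eq:RLrel} this is true for $l_2 \geq l_0$ with a threshold $l_0 = l_0(u)$. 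Factoring $\EE_{\omega_2} = \EE_{\cc_2^+ \setminus \cd{R_{l_2}}(y)}\,\EE_{\cd{R_{l_2}}(y)}$ and applying Wegner to the inner expectation bounds each summand by $2 C_{\rm W} \lVert \rho \rVert_{\rm Var}\, \epsilon'\, (2l_2+1)^{2d+N}$, uniformly in the remaining variables. Summing over eigenvalues, integrating over $\omega_1$, and using $(2l_1+1)^d(2l_2+1)^{2d+N} \leq (2\max\{l_1,l_2\}+1)^{3d+N}$ closes the estimate with a constant $C_1$ proportional to $C_{\rm W} \lVert \rho \rVert_{\rm Var}$.

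The main technical point is the tail estimate in the first step: keeping the exponent $3\alpha/2$ (rather than the naive $3\alpha$) requires absorbing the shell-counting polynomial by $r^d \leq \euler^{\alpha r/2}$ from Lemma~\ref{lem:nexp}, which together with the constraint $R_{l_2} \leq 4 l_2$ is what fixes $l_0(u)$. Everything else is bookkeeping, provided the partial-average structure of Theorem~\ref{theorem:wegner_d} is respected so that only variables inside $\cc_2^+$ are integrated when applying Wegner.
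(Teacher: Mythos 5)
Your proposal is correct and follows essentially the same route as the paper's proof: an exponential perturbation bound for the potential contribution from outside $\cc_i^+$ (the paper packages this as Lemma~\ref{lemma:eig_perturb}), reduction to the configurations frozen to zero outside $\cc_i^+$, an eigenvalue-counting/Chebyshev step over $\sigma(h^{\cc_1}_{(\omega_1,0)})$ with at most $(2l_1+1)^d$ terms, and the partial-average Wegner estimate of Theorem~\ref{theorem:wegner_d} applied inside $\cc_2^+$ under the condition $R_{l_2}\leq 4l_2$, i.e.\ $l_2\geq l_0(u)$. The only cosmetic difference is your tail estimate via Lemma~\ref{lem:nexp}, which asks for a largeness threshold on $l_i$; the paper instead splits $\euler^{-\alpha\lVert k\rVert_1}\leq \euler^{-\alpha l'/2}\,\euler^{-\alpha\lVert k\rVert_\infty/2}$ in Lemma~\ref{lemma:eig_perturb}, so no condition on $l_1$ is needed (in your version small $l_1$ is handled simply by enlarging $C_2$).
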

For the proof we need some preparatory estimate.
\begin{lemma} \label{lemma:eig_perturb}
Let Assumption~\ref{ass:exp} be satisfied. Then there exists a constant $\hat C = \hat C(C,\alpha , d)$ such that for all $l,l' \geq 0$ and all $x \in \cd{l}$ there holds
\[
\sum_{\lVert k \rVert_\infty > l+l'} \lvert u(x-k) \rvert \leq \hat C \mathrm{e}^{-\alpha l' / 2}  .
\]
\end{lemma}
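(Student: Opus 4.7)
The plan is to reduce the sum over $\{k : \lVert k \rVert_\infty > l+l'\}$ to a sum over $\{m : \lVert m \rVert_\infty > l'\}$ via a change of variables, then exploit the exponential bound from Assumption~\ref{ass:exp} by splitting the exponent in half.

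First, fix $x \in \cd{l}$, so $\lVert x \rVert_\infty \le l$. Substitute $m = x - k$; since this substitution is a bijection on $\ZZ^d$, the sum of $\lvert u(x-k) \rvert$ over $\lVert k \rVert_\infty > l+l'$ equals the sum of $\lvert u(m) \rvert$ over the image set. By the reverse triangle inequality, $\lVert m \rVert_\infty = \lVert k-x \rVert_\infty \ge \lVert k \rVert_\infty - \lVert x \rVert_\infty > l'$, so this image set is contained in $\{m \in \ZZ^d : \lVert m \rVert_\infty > l'\}$. Hence
\[
\sum_{\lVert k \rVert_\infty > l+l'} \lvert u(x-k) \rvert \;\le\; \sum_{\lVert m \rVert_\infty > l'} \lvert u(m) \rvert.
\]

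Next, apply Assumption~\ref{ass:exp} and factor the exponential as $\euler^{-\alpha \lVert m \rVert_1} = \euler^{-\alpha \lVert m \rVert_1 / 2} \cdot \euler^{-\alpha \lVert m \rVert_1 / 2}$. Using the elementary inequality $\lVert m \rVert_1 \ge \lVert m \rVert_\infty > l'$ in the first factor gives
\[
\sum_{\lVert m \rVert_\infty > l'} \lvert u(m) \rvert \;\le\; C \euler^{-\alpha l'/2} \sum_{m \in \ZZ^d} \euler^{-\alpha \lVert m \rVert_1 / 2}.
\]
The remaining sum factorizes as a product of $d$ one-dimensional geometric series,
\[
\sum_{m \in \ZZ^d} \euler^{-\alpha \lVert m \rVert_1 / 2} \;=\; \left(\frac{1+\euler^{-\alpha/2}}{1-\euler^{-\alpha/2}}\right)^{\!d},
\]
which is finite and depends only on $\alpha$ and $d$. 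Combining the two displays yields the desired estimate with $\hat C = C\bigl((1+\euler^{-\alpha/2})/(1-\euler^{-\alpha/2})\bigr)^d$.

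There is no serious obstacle here; the only substantive point is the choice to split the exponent into two equal halves, using one half to pull out the prefactor $\euler^{-\alpha l'/2}$ (via $\lVert m \rVert_1 \ge \lVert m \rVert_\infty$) and keeping the other half to obtain an absolutely summable tail over all of $\ZZ^d$, with a constant depending only on $C$, $\alpha$, and $d$ as required.
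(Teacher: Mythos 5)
Your proof is correct and follows essentially the same route as the paper: reduce to a sum over $\{m : \lVert m\rVert_\infty > l'\}$ via the triangle inequality, split the exponent in half to extract the factor $\euler^{-\alpha l'/2}$, and bound the remainder by a convergent sum over all of $\ZZ^d$. The only (cosmetic) difference is that you retain $\euler^{-\alpha\lVert m\rVert_1/2}$ in the tail and compute the constant explicitly, while the paper passes to $\euler^{-\alpha\lVert k\rVert_\infty/2}$.
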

\begin{proof}
Since $\lVert x-k \rVert_\infty > l'$ for all $k$ with $\lVert k \rVert_\infty > l+l'$, we have the estimate
\[
S\colonequals \sum_{\lVert k \rVert_\infty > l+l'} \lvert u(x-k) \rvert \leq \sum_{\lVert x-k \rVert_\infty > l'} \lvert u(x-k) \rvert
= \sum_{\lVert k \rVert_\infty > l'} \lvert u(k) \rvert .
\]
We use the exponential decay condition on $u$, $\lVert k \rVert_1 \geq \lVert k \rVert_\infty$, and obtain
\begin{align*}
S &\leq C \sum_{\lVert k \rVert_\infty > l'} \mathrm{e}^{-\alpha \lVert k \rVert_1} 
\leq C \mathrm{e}^{-\alpha l' / 2} \sum_{\lVert k \rVert_\infty > l'} \mathrm{e}^{-\alpha \lVert k \rVert_\infty / 2} \\[1ex]
&\leq C \mathrm{e}^{-\alpha l' / 2} \sum_{k \in \mathbb{Z}^d} \mathrm{e}^{-\alpha \lVert k \rVert_\infty / 2} .
\end{align*}
The sum itself assumes a finite value $C_{\alpha,d} > 0$, depending only on $\alpha$ and $d$, so we have for all $l,l' >0$ that $S \leq C C_{\alpha,d} \mathrm{e}^{-\alpha l'/2}$.
\end{proof}
\begin{proof}[Proof of Proposition~\ref{prop:unif_wegner_type}]
Let $i \in \{1,2\}$ and $\omega,\omega' \in \Omega$ with $\Pi_{\cd{i}^+} \omega = \Pi_{\cd{i}^+} \omega'$. By Lemma~\ref{lemma:eig_perturb} we have for all 
$x \in \cd{i}$ that
\begin{align}  \label{eq:perturb}
\lvert v_\omega (x) - v_{\omega'} (x) \rvert 
&\leq \sum_{k \not \in \cd{i}^+} \lvert u(x-k) \rvert \lvert \omega_k - \omega_k' \rvert  \leq C_{u,\rho} \euler^{-3l_i \alpha / 2} =: \delta_i .
\end{align}
Here $C_{u,\rho}$ is a constant depending only on the single-site potential $u$ and the density $\rho$. From this fact there follows that the eigenvalues of $h_\omega^{\cd{i}}$ move at most by $\delta_i$ if the configuration changes from $\omega$ to $\omega'$. 
\par
It will be convenient to take a special choice of $\omega'$, namely one where all coupling constants outside a finite box are set equal to zero. More precisely, set
\begin{align*}
h_\omega^1 \colonequals h_{\hat \omega}^{\cd{1}}, \ \text{where $\hat\omega_k =
\omega_k \mathbf{1}_{\cd{1}^+}(k)$}, \\
h_\omega^2 \colonequals h_{\hat \omega}^{\cd{2}}, \ \text{where $\hat\omega_k =
\omega_k \mathbf{1}_{\cd{2}^+}(k)$} .
\end{align*}
Hence,
\begin{align}
& \tilde d ( \sigma (h_\omega^{\cd{1}}) , \sigma (h_\omega^{\cd{2}}))) < \epsilon , \nonumber \\
\Rightarrow \quad & d ( \sigma (h_\omega^1) , \sigma (h_\omega^2)) < \epsilon + \delta_1 + \delta_2 , \nonumber \\
\Rightarrow \quad & \exists \, E \in \sigma (h_\omega^1) : d(E,\sigma (h_\omega^2)) < \epsilon + \delta_1 + \delta_2 .  \label{eq:implication}
\end{align}
The set of $\omega \in \Omega$ where \eqref{eq:implication} holds will be denoted by $B$. Thus
 \[
 \PP (A(\cd{1} , \cd{2} , \epsilon)) \leq \PP (B) = \EE \bigl[ \EE_{\cd{2}^+} \bigl( \mathbf{1}_{B} \bigr) \bigr].
\]
Next we provide a uniform upper bound on the random variable $\EE_{\cd{2}^+}( \mathbf{1}_{B} )$. Using the pointwise estimate
\begin{multline*}
 \mathbf{1}_{\displaystyle \{\omega \in \Omega \colon \exists \, E \in \sigma (h_\omega^1) : d(E,\sigma (h_\omega^2)) < \epsilon + \delta_1 + \delta_2\}} (\omega) 
\\ \leq
\sum_{E \in \sigma (h_\omega^1)} \mathbf{1}_{\displaystyle \{\omega \in \Omega \colon d(E,\sigma (h_\omega^2)) < \epsilon + \delta_1 + \delta_2\}} (\omega)
\end{multline*}
and Chebychev's inequality we obtain
\begin{align*}
 \EE_{\cd{2}^+} \bigl( \mathbf{1}_{B} \bigr) &\leq \sum_{E \in \sigma (h_\omega^1)} \EE_{\cd{2}^+} \Bigl( \mathbf{1}_{\displaystyle \{\omega \in \Omega \colon d(E,\sigma (h_\omega^2)) < \epsilon + \delta_1 + \delta_2\}} \Bigr) \\
& \leq \sum_{E \in \sigma (h_\omega^1)} \EE_{\cd{2}^+} \Bigl( \Tr P_I (h_\omega^2) \Bigr) ,
\end{align*}
where $I = [E-\epsilon-\delta_1-\delta_2 , E+\epsilon+\delta_1+\delta_2]$.
Here we used that the set $\sigma (h_\omega^1)$ is independent of the random variables $\omega_k$, $k \in \cd{2}^+$. If $\cd{2}^+ \supset \cd{R_{l_2}}$, 
i.e., 
\[
 4l_2 \geq R_{l_2} = \max \left\{ 2l_2 + \frac{2}{\alpha} \ln \frac{2\, 3^d\, C}
  {\lvert c_u \rvert (1-\mathrm{e}^{-\alpha/2})}, \frac{8 (d+\lVert I_0 \rVert_1)^2}{\alpha^2} \right\} ,
\]
we can apply Theorem~\ref{theorem:wegner_d} and obtain 
\[
 \EE_{\cd{2}^+} \bigl( \mathbf{1}_{B} \bigr) \leq (2l_1 + 1)^d C_{\rm W} \lVert \rho \rVert_{\rm Var} 2(\epsilon + \delta_1 + \delta_2) (2l_2 + 1)^{2d + \lVert I_0 \rVert_1} 
\]
uniformly in $\omega_k$, $k \in \ZZ^d \setminus \cd{2}^+$. 
Thus 
\[
 \EE \bigl[ \EE_{\cd{2}^+} \bigl( \mathbf{1}_{B} \bigr) \bigr]\leq (2l_1 + 1)^d C_{\rm W} \lVert \rho \rVert_{\rm Var} 2(\epsilon + \delta_1 + \delta_2) (2l_2 + 1)^{2d + \lVert I_0 \rVert_1}. \qedhere
 \]
\end{proof}
An analogue of Proposition~\ref{prop:unif_wegner_type} holds true for the continuous alloy-type model with a single-site potential of generalized step function form. 
\begin{proposition} \label{prop:unif_wegner_type_cont}
Assume that $U$ is a generalized step function and that Assumptions~\ref{ass:exp} and \ref{ass:bv} are satisfied. Let further $J \subset\RR$ be a bounded interval. Then there are constants $C_3 =C_4(d) >0$, $C_4 = C_4 (U,\rho)$, $C_5 = C_5 (U,\rho)$, $l_0^* = l_0^* (U)$ and $l_1^* = l_1^* (U)$, such that if $l_2 \geq l_0^*$ and $l_1,l_2 \geq l_1^*$ we have for all $\epsilon > 0$ the bound
\begin{multline*}
 \mathbb{P}\rkl{A_{J}(\Lambda_1,\Lambda_2 , \epsilon)} 
\leq C_3 (\max\{\sup J , -C_5\} + C_5)^{d/2} \euler^{\lvert \sup J \rvert}  \\ \times (2\max\{l_1 , l_2 \} + 1)^{3d+\lVert I_0 \rVert_1}  \bigl[\epsilon + C_4 (2 \max \{l_1 , l_2\} + 1)^d \euler^{-3 \min\{l_1 , l_2\} \alpha / 2} \bigr].
\end{multline*}
\end{proposition}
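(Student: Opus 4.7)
The strategy mirrors the proof of Proposition~\ref{prop:unif_wegner_type}, with two new ingredients required by the continuum setting: a Weyl-type a priori count of the Dirichlet eigenvalues of $H_\omega^{\Lambda_1}$ inside the bounded window $J$, and an $L^p$-to-spectral-shift conversion for the perturbation step, since in the continuum the potential difference is controlled only in an $L^p$-sense rather than pointwise.

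First I would establish a continuum analogue of \eqref{eq:perturb}. If $\omega,\omega'$ agree on the lattice sites $\tilde\Lambda_i^+$ of $\Lambda_i^+$, writing $U=\sum_k u(k) w(\cdot-k)$, invoking Lemma~\ref{lemma:eig_perturb} applied to $u$, and exploiting $\supp w\subset[-r,r]^d$, one obtains, for $l_i\ge l_1^*$ large enough that $4l_i-r\ge 3l_i$,
\[
\|V_\omega-V_{\omega'}\|_{L^p(\Lambda_i)} \le C_{U,\rho}\,(2l_i+1)^{d/p}\,\mathrm{e}^{-3l_i\alpha/2}.
\]
By the uniform infinitesimal $(-\Delta)$-boundedness exploited in \eqref{eq:loc-lp-norm}, this translates into a Hausdorff-distance bound between the spectra of $H_\omega^{\Lambda_i}$ and the operator $H_{\hat\omega}^{\Lambda_i}$, with $\hat\omega_k=\omega_k\mathbf{1}_{\tilde\Lambda_i^+}(k)$, of the form $\delta_i:=C_4(2l_i+1)^d\mathrm{e}^{-3l_i\alpha/2}$. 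Setting $H_\omega^i:=H_{\hat\omega}^{\Lambda_i}$, its spectrum depends only on $\omega_k,\,k\in\tilde\Lambda_i^+$, and $A_J(\Lambda_1,\Lambda_2,\epsilon)$ is contained in the event
\[
B:=\bigl\{\omega:\exists\,E\in\sigma(H_\omega^1)\cap J' \ \text{with}\ d(E,\sigma(H_\omega^2))<\epsilon'\bigr\},
\]
where $J'$ is $J$ dilated by $\delta_1+\delta_2$ and $\epsilon'=\epsilon+\delta_1+\delta_2$.

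Integrating first over $\omega_k,\,k\in\tilde\Lambda_2^+$, Chebyshev's inequality yields
\[
\PP(B)\le \EE\Bigl[\sum_{E\in\sigma(H_\omega^1)\cap J'} \EE_{\tilde\Lambda_2^+}\Tr P_{[E-\epsilon',E+\epsilon']}(H_\omega^2)\Bigr].
\]
For $l_2\ge l_0^*$ so large that $\tilde\Lambda_2^+\supset\cd{R_{l_2+r}}$, Theorem~\ref{theorem:wegner_c} controls the inner term by $\mathrm{e}^{\sup J'}\,C_{\rm W}\,\|\rho\|_{\rm Var}\,2\epsilon'\,(2l_2+1)^{2d+\|I_0\|_1}$. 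The number of summands is controlled by a standard Dirichlet Weyl-type estimate: because $V_0+V_\omega$ is uniformly bounded below by some $-C_5$ depending only on $U$ (a consequence of Assumption~\ref{ass:exp} and the bound~\eqref{eq:loc-lp-norm}),
\[
\#\bigl\{E\in\sigma(H_\omega^{\Lambda_1})\cap(-\infty,\sup J]\bigr\}\le C_3(\max\{\sup J,-C_5\}+C_5)^{d/2}(2l_1+1)^d.
\]
Combining these three bounds, using $\mathrm{e}^{\sup J'}\le\mathrm{e}^{|\sup J|}$ (up to absorbing the $\delta_i$'s into $C_4$), and consolidating the volume factors into $(2\max\{l_1,l_2\}+1)^{3d+\|I_0\|_1}$ yields the claimed inequality.

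The main obstacle is the first step: in the discrete setting Lemma~\ref{lemma:eig_perturb} delivers an $\ell^\infty$ bound on $v_\omega-v_{\omega'}$ directly, whereas here we have only an $L^p$ bound on $V_\omega-V_{\omega'}$ and must pay the price of a volume factor $(2l_i+1)^d$ when translating into a spectral-shift estimate. This volume factor is precisely what produces the extra $(2\max\{l_1,l_2\}+1)^d$ next to $\mathrm{e}^{-3\min\{l_1,l_2\}\alpha/2}$ in the proposition, a phenomenon absent in Proposition~\ref{prop:unif_wegner_type}.
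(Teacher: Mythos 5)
Your proposal follows essentially the same route as the paper's proof: decouple the two boxes via the exponential tail bound of Lemma~\ref{lemma:eig_perturb} (the paper likewise obtains the spectral shift $\delta_i = C(2l_i+1)^d\euler^{-3l_i\alpha/2}$, treating general $w\in L^p_{\rm c}$ via the Kirsch--Martinelli perturbation estimate, which is your $L^p$-to-spectral-shift step), then apply Chebyshev together with the partial average $\EE_{\cd{2}^+}$ and Theorem~\ref{theorem:wegner_c} under the condition $\cd{2}^+\supset\cd{R_{l_2+r}}$, and finally bound the number of eigenvalues in $J$ by a Weyl-type count using the uniform lower bound on the potential. All three ingredients and the resulting constants match, so the argument is correct and coincides with the paper's.
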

\begin{proof}
Let $i \in \{1,2\}$ and $\omega,\omega' \in \Omega$ with $\Pi_{\cd{i}^+} \omega = \Pi_{\cd{i}^+} \omega'$. Then we have for
$x \in \Lambda_i$ that
\begin{align*}  
\lvert V_\omega (x) - V_{\omega'} (x) \rvert 
&\leq \sum_{k \not \in \cd{i}^+} \lvert U(x-k) \rvert \lvert \omega_k - \omega_k' \rvert \\
&\leq C_\rho \sum_{j \in \ZZ^d} \lvert w (x-j) \rvert \sum_{k \not \in \cd{i}^+} \lvert u(j-k) \rvert 
\end{align*}
with some constant $C_\rho$ depending only on the probability density $\rho$. Recall that $r = \sup \{\rVert x \rVert_\infty \colon w(x) \not = 0\}$. Since $x \in \Lambda_i$ we have $w(x-j) = 0$ for $j \not \in \cd{l_i + r}$. If we assume that $l_i \geq r/3 =: l_1^*(U)$, $i \in \{1,2\}$, then using Lemma~\ref{lemma:eig_perturb} with $l = l_i + r$ and $l' = 3l_i - r$ we have
\[
 \lvert V_\omega (x) - V_{\omega'} (x) \rvert \leq C_\rho \sum_{j \in \cd{l_i + r}} \lvert w(x-j) \rvert \hat C \euler^{-\alpha (3 l_i - r) / 2} .
\]
For the moment we assume $w \in L^\infty (\RR^d)$. Then we have
\[
 \lvert V_\omega (x) - V_{\omega'} (x) \rvert \leq C_{U ,\rho}  (2l_i + 1)^d \euler^{-\alpha 3 l_i / 2} =: \delta_i
\]
with some constant $C_{U ,\rho}$ depending on the single-site potential $U$ and the density $\rho$.
From this fact there follows that the eigenvalues of $H_\omega^{\Lambda_i}$ move at most by $\delta_i$ if the configuration changes from $\omega$ to $\omega'$. For general $w$ as in Definition~\ref{def:step} the same fact holds by using Proposition~2.1 in \cite{KirschM-82}.
\par
It will be convenient to take a special choice of $\omega'$, namely one where all coupling constants outside a finite box are set equal to zero. More precisely, set
\begin{align*}
H_\omega^1 \colonequals H_{\hat \omega}^{\Lambda_1}, \ \text{where $\hat\omega_k =
\omega_k \mathbf{1}_{\cd{1}^+}(k)$}, \\
H_\omega^2 \colonequals H_{\hat \omega}^{\Lambda_2}, \ \text{where $\hat\omega_k =
\omega_k \mathbf{1}_{\cd{2}^+}(k)$} .
\end{align*}
Hence,
\begin{align}
& \tilde d ( \sigma_J (H_\omega^{\Lambda_1} , \sigma_J (H_\omega^{\Lambda_2}))) < \epsilon , \nonumber \\
\Rightarrow \quad & d ( \sigma_J (H_\omega^1) , \sigma_J (H_\omega^2)) < \epsilon + \delta_1 + \delta_2 , \nonumber \\
\Rightarrow \quad & \exists \, E \in \sigma_J (H_\omega^1) : d(E,\sigma_J (H_\omega^2)) < \epsilon + \delta_1 + \delta_2 .  \label{eq:implication2}
\end{align}
The set of $\omega \in \Omega$ where \eqref{eq:implication2} holds will be denoted by $B$. Thus
 \[
 \PP (A_J(\Lambda_1 , \Lambda_2 , \epsilon)) \leq \PP (B) = \EE \bigl[ \EE_{\cd{2}^+} \bigl( \mathbf{1}_{B} \bigr) \bigr].
\]
Next we provide a uniform upper bound on the random variable $\EE_{\cd{2}^+}( \mathbf{1}_{B} )$. Using the pointwise estimate
\begin{multline*}
 \mathbf{1}_{\displaystyle \{\omega \in \Omega \colon \exists \, E \in \sigma_J (H_\omega^1) : d(E,\sigma_J (H_\omega^2)) < \epsilon + \delta_1 + \delta_2\}} (\omega) 
\\ \leq
\sum_{E \in \sigma_J (H_\omega^1)} \mathbf{1}_{\displaystyle \{\omega \in \Omega \colon d(E,\sigma_J (H_\omega^2)) < \epsilon + \delta_1 + \delta_2\}} (\omega)
\end{multline*}
and Chebychev's inequality we obtain
\begin{align*}
 \EE_{\cd{2}^+} \bigl( \mathbf{1}_{B} \bigr) &\leq \sum_{E \in \sigma_J (H_\omega^1)} \EE_{\cd{2}^+} \Bigl( \mathbf{1}_{\displaystyle \{\omega \in \Omega \colon d(E,\sigma_J (H_\omega^2)) < \epsilon + \delta_1 + \delta_2\}} \Bigr) \\
& \leq \sum_{E \in \sigma_J (H_\omega^1)} \EE_{\cd{2}^+} \Bigl( \Tr P_I (H_\omega^2) \Bigr) ,
\end{align*}
where $I = [E-\epsilon-\delta_1-\delta_2 , E+\epsilon+\delta_1+\delta_2]$.
Here we used that the set $\sigma_J (H_\omega^1)$ is independent of the random variables $\omega_k$, $k \in \cd{2}^+$. If $\cd{2}^+ \supset \cd{R_{l_2+r}}$, i.e.\ $4l_2 \geq R_{l_2 + r}$ or $l_2 \geq l_0^* (U)$ with an appropriate chosen $l_0^* (U)$, we can apply Theorem~\ref{theorem:wegner_c} and obtain 
\[
 \EE_{\cd{2}^+} \bigl( \mathbf{1}_{\tilde B} \bigr) \leq \sum_{E \in \sigma_J (H_\omega^1)} \euler^{\lvert \sup J \rvert}C_{\rm W} \lVert \rho \rVert_{\rm Var} 2(\epsilon + \delta_1 + \delta_2) (2l_2 + 1)^{2d + \lVert I_0 \rVert_1} 
\]
uniformly in $\omega_k$, $k \in \ZZ^d \setminus \cd{2}^+$. Recall that the number of eigenvalues $\lvert \sigma_J (H_\omega^1) \rvert$ satisfies the bound $\lvert \sigma_J (H_\omega^1) \rvert \leq C \lvert \Lambda \rvert$ for all $\omega \in \Omega$. Here $C$ depends on the space dimension $d$, $\sup J$, 
the single-site potential $U$ and the measure $\mu$. This bound can be obtained by using the perturbation bound \eqref{eq:loc-lp-norm}, 
see e.g.\ Proposition~2.1 of \cite{KirschM-82}, and the well known Weyl bound for the number of eigenvalues of the Dirichlet Laplacian less then $\lambda$, see e.g.\ \cite{ReedS-78d}. Since the right hand side is now independent of $\omega$ we obtain the statement of the proposition.
\end{proof}
%
%
%

\section{Localization via multiscale analysis (discrete model)} \label{sec:loc_discrete}
\subsection{Basic notation}
For $\cc \subset \ZZ^d$ we denote by $\partial^{\rm i} \cc = \{k \in \cc : \# \{j \in \cc : \lVert k-j \rVert_1 = 1\} < 2d\}$ the interior boundary of $\cc$ and by $\partial^{\rm o} \cc = \partial^{\rm i} \cc^{\rm c}$ the exterior boundary of $\cc$. Here $\cc^{\rm c} = \ZZ^d \setminus \cc$ denotes the complement of $\cc$. Moreover, we define the bond-boundary $\partial^{\rm b} \cc$ of
$\cc$ as
\[
\partial^{\rm b} \cc = \left\{(u,u') \in \ZZ^d \times \ZZ^d :
u\in \cc,\ u'\in \ZZ^d \setminus \cc,\ \text{and} \ \lVert
u-u' \rVert_1 = 1\right\}\,.
\]
For $\cc \subset \ZZ^d$, $u,w \in \cc$ and $E \in \CC \setminus \sigma (h_\omega^\cc)$ we denote the Green's function by
\[
 G_\omega^\cc (E;u,w) \colonequals \langle \delta_u , (h_\omega^\cc - E)^{-1} \delta_w \rangle .
\]
For the following definitions let $u \in \ZZ^d$ and $l > 0$.
\begin{definition}\label{def:mereg}
Let $m>0$ and $E \in \RR$. A cube $\cd{l} (u)$ is called \emph{$(m,E)$-regular} (for fixed $\omega \in \Omega$), if $E \not \in \sigma (h_\omega^{\cd{l} (u)})$ and
\[
 \sup_{w \in \partial^{\rm i} \cd{l} (u)} \lvert G_\omega^{\cd{l} (u)} (E;u,w) \rvert \leq \euler^{-ml} .
\]
Otherwise the cube $\cd{l} (u)$ is called \emph{$(m,E)$-singular}.
\end{definition}
\begin{definition}
Let $G_{l} (u,m,E) = \{\omega \in \Omega \mid \cd{l} (u) \ \text{is $(m,E)$-regular}\}$. A cube $\cd{l} (u)$ is called \emph{uniformly $(m,E)$-regular} (for fixed $\omega \in \Omega$), if $\omega' \in G_l (u,m,E)$ for all $\omega'$ with $\Pi_{\cd{4l} (u)} \omega' = \Pi_{\cd{4l} (u)} \omega$.
\end{definition}
\begin{definition}
\label{def:E-NR}
 Let $\omega \in \Omega,\ \zeta > 0$ and $E \in \RR$. We call a cube $\cd{l}(u)$ non-resonant for $\omega$ at energy $E$, $E$-NR for short, if 
\[
d\Bigl(E,\sigma \bigl(h_{\omega}^{\cd{l}(u)} \bigr) \Bigr) \geq \frac{1}{2} l^{-\zeta}, 
\]
or equivalently
\[
 \lVert G_\omega^{\cd{l}(u)}(E)\rVert \leq 2 l^\zeta .
\]
\end{definition}
It is convenient to introduce in accordance with \eqref{eq:unif_dist} also the following uniform distance
\begin{equation*} \label{eq:inif_dist2}
 \tilde{d}\bigl(E,\sigma(h^{\cd{l} (u)}_{\omega})\bigr) \colonequals \inf_{\omega_1^{\bot} \in \, \Omega_{\ZZ^d\setminus \cd{4l} (u)}} 
d\Bigl(E,\sigma\bigl(h^{\cd{l}(u)}_{(\omega_1 , \omega_1^\bot )}\bigr)\Bigr),
\end{equation*}
where $\omega_1 = \Pi_{\cd{4l}(u)} \omega$. 

\begin{definition}
Let $\omega \in \Omega,\ \zeta > 0$ and $E \in \RR$.
We call a cube $\cd{l}(u)$ uniformly non-resonant for $\omega \in \Omega$ at energy $E$, uniformly $E$-NR for short, if
\[
\tilde d \biggl(E,\sigma\Bigl(h_{\omega}^{\cd{l}(u)} \Bigr) \biggr) \geq \frac{1}{2} l^{-\zeta} .
\]
\end{definition}
\subsection{Induction step of the multiscale analysis} \label{sec:induction}
In this section we will carry out the induction step of the multiscale analysis.
We rely on the ideas and presentation from \cite{DreifusK-89} and \cite{KirschSS-98b}. 
For an interval $I \subset \RR$, $m > 0$, $l \in \NN$ and $z_1,z_2 \in \ZZ^d$ we define the event
\begin{multline*}
 B_l(z_1,z_2,m,I) \colonequals\{\omega \in \Omega: \\ \exists E \in I:\cd{l}(z_1) \text{ and } \cd{l}(z_2) \text{ are not uniformly }(m,E)\text{-regular}\} .
\end{multline*}
Note that the event $B_l(z_1,z_2,m,I)$ is independent of the coordinates $\omega_k$, $k \in \ZZ^d \setminus (\cd{4l}(z_1) \cup \cd{4l}(z_2))$. Therefore, $B_l(z_1,z_2,m,I)$ is a $\cd{4l}(z_1)\cup\cd{4l}(z_2)$ cylinder set. With this in mind, we obtain the independence of the events
\begin{equation} \label{eq:independent}
 B_l(z_1,z_2,m,I) \quad \text{and} \quad  B_l(z_3,z_4,m,I)
\end{equation}
provided $(\cd{4l}(z_1) \cup \cd{4l}(z_2)) \cap (\cd{4l}(z_3) \cup \cd{4l}(z_4)) = \emptyset$. 
Let now additionally $\xi >0$ be given. We say that the estimate  $G(I,l,m,\xi)$ is satisfied, if for all $z_1, z_2 \in \ZZ^d$ with $\lVert z_1-z_2 \rVert_{\infty} > 8l$ there holds
\[
 \PP(B_l (z_1,z_2,m,I)^{\rm c})\geq 1-l^{-2\xi} .
\]
Note that, if $G(I,l,m,\xi)$ is satisfied, then $\PP(B_l(x,y,m,I) )\leq l^{-2\xi}$ for all $x,y \in \ZZ^d$ with $\lVert x-y \rVert_\infty > 8l$.
\begin{theorem} \label{thm:induction}
Let Assumptions~\ref{ass:exp} and \ref{ass:bv} be satisfied, fix $\xi >2d$, $\kappa \in (1,2\xi /(\xi+2d)$ and $\beta \in (2-\kappa,1)$.
Then there exists $l^* = l^*(d,\xi,\kappa,\beta,u,\rho) \geq 1$
such that the following implication holds: 
\par
If for $l\geq l^*$ and $m > l^{\beta-1}$ the estimate $G(I,l,m,\xi)$ is satisfied, 
then also $G(I,L,m_L,\xi)$ is true where $L=l^\kappa$ and $m_L > 0$ satisfies
\begin{equation} \label{eq:m_L}
 m > m_L > m\bigl(1-L^{-(1-\beta)/\kappa}\bigr)-L^{-(1-\beta)/\kappa} > L^{\beta-1} .
\end{equation}
\end{theorem}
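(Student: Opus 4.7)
The plan is to carry out the standard multiscale induction step of \cite{DreifusK-89,KirschSS-98b}, adapted to the long-range sign-changing setting through systematic use of the uniform versions of non-resonance and regularity. Fix $z_1, z_2 \in \ZZ^d$ with $\lVert z_1 - z_2 \rVert_\infty > 8L$ and $L = l^\kappa$; the goal is to show $\PP(B_L(z_1, z_2, m_L, I)) \le L^{-2\xi}$. I decompose the bad event as $B_L(z_1,z_2,m_L,I) \subseteq A_{\rm res} \cup A_{\rm sing}$, where $A_{\rm res}$ is the event that either $\cd{L}(z_1)$ or $\cd{L}(z_2)$ fails to be uniformly $E$-NR for some $E \in I$, and $A_{\rm sing}$ is the event that, for some common $E \in I$, both big cubes are uniformly $E$-NR but each contains two disjoint small subcubes that fail to be uniformly $(m,E)$-regular. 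This inclusion is the output of the geometric resolvent iteration described below.

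For $A_{\rm res}$ I would discretise $I$ into $O(\lvert I \rvert L^\zeta)$ subintervals of length $L^{-\zeta}$ and apply Proposition~\ref{prop:unif_wegner_type} to each of the two big cubes. The resulting bound takes the form
\[
\PP(A_{\rm res}) \le C \lvert I \rvert L^{3d+\lVert I_0\rVert_1 + \zeta}\bigl[L^{-\zeta} + C_2 \euler^{-3L\alpha/2}\bigr],
\]
which, for $\zeta$ chosen large enough depending on $d$, $\xi$ and $\lVert I_0\rVert_1$, is at most $\tfrac12 L^{-2\xi}$ for all $L \ge l^*$. For $A_{\rm sing}$ I use the inclusion
\[
A_{\rm sing} \subseteq \bigcup_{y_1, y_2} B_l(y_1, y_2, m, I),
\]
where $y_i$ ranges over the admissible positions of singular small subcubes in $\cd{L}(z_i)$. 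Because $\lVert z_1 - z_2\rVert_\infty > 8L$, every such pair automatically satisfies $\lVert y_1 - y_2 \rVert_\infty > 8l$ and the cylinder-set structure recorded in \eqref{eq:independent} makes the induction hypothesis $G(I,l,m,\xi)$ applicable, yielding $\PP(B_l(y_1, y_2, m, I)) \le l^{-2\xi}$. A union bound over admissible pairs, combined with the counting constraint $\kappa < 2\xi/(\xi + 2d)$ which is exactly our standing hypothesis, then produces $\PP(A_{\rm sing}) \le \tfrac12 L^{-2\xi}$.

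The propagation step justifying the initial inclusion is the discrete geometric resolvent identity applied iteratively. On the complement of $A_{\rm res} \cup A_{\rm sing}$, at least one of the big cubes, say $\cd{L}(z_1)$, is uniformly $E$-NR and contains at most one not-uniformly-$(m,E)$-regular small subcube. For any admissible perturbation $\omega'$ of $\omega$ outside $\cd{4L}(z_1)$ one expands $G^{\cd{L}(z_1)}_{\omega'}(E; z_1, w)$ step by step along a chain of uniformly regular small subcubes: each step contributes a factor $C_d \cdot 2 L^\zeta \cdot \euler^{-ml}$, the middle factor coming from uniform $E$-non-resonance. Telescoping over the $\sim L/l$ steps required to reach the boundary of $\cd{L}(z_1)$ and absorbing the logarithmic losses into the rate produces the decay $\euler^{-m_L L}$ with $m_L$ satisfying \eqref{eq:m_L} under the quantitative conditions $m > l^{\beta - 1}$ and $\beta > 2 - \kappa$.

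The main obstacle is the interaction between the long-range single-site potential and this iteration. Because $u$ is not compactly supported, the operator $h_\omega^{\cd{l}(y)}$ depends on all coupling constants in $\ZZ^d$, and strict independence of events supported on disjoint cubes fails. The uniform formulations of regularity and non-resonance, enforcing the estimates simultaneously for all admissible outside coupling constants in $\Omega_{\ZZ^d \setminus \cd{4l}(y)}$, repair this defect and make the cylinder independence in \eqref{eq:independent} together with the Wegner-type bound of Proposition~\ref{prop:unif_wegner_type} available at every stage of the induction.
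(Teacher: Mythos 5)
Your overall architecture (control resonances, control the number of singular subcubes, then iterate the geometric resolvent identity) is the right one, but the way you split off the resonance event is a genuine error that the argument cannot survive. You define $A_{\rm res}$ as the event that \emph{either} big cube fails to be uniformly $E$-NR for \emph{some} $E\in I$, and propose to bound it by discretising $I$ into $O(\lvert I\rvert L^{\zeta})$ windows and applying a Wegner-type bound to each window. This cannot work: the $L^{\zeta}$ from the discretisation exactly cancels the $L^{-\zeta}$ from the window width, so your own displayed bound reduces to $C\lvert I\rvert L^{3d+\lVert I_0\rVert_1}$, which grows in $L$; no choice of $\zeta$ repairs this. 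Indeed $\PP(A_{\rm res})$ is close to $1$ whenever $I$ meets the spectrum, since $\cd{L}(z_1)$ has $\sim L^d$ eigenvalues and each creates a resonant window. The correct mechanism --- and the one the paper uses --- is to control only the probability that the two big cubes (together with certain subcubes) are resonant \emph{with each other}, via the two-cube estimate of Proposition~\ref{prop:unif_wegner_type}, and then to observe that off this event, for each fixed $E$ at least \emph{one} of $\cd{L}(z_1)$, $\cd{L}(z_2)$ has all relevant subcubes uniformly $E$-NR; which one it is may depend on $E$, and that suffices because $G(I,L,m_L,\xi)$ only forbids both cubes being singular simultaneously.

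Two further points would need repair even after fixing the above. First, your union bound for $A_{\rm sing}$ runs over single pairs $(y_1,y_2)$ with one centre in each big cube, each of probability $\le l^{-2\xi}$; this yields at best $L^{2d}l^{-2\xi}=L^{2d-2\xi/\kappa}\le L^{-\xi}$, not $L^{-2\xi}$. To reach $L^{-2\xi}$ you must exploit the \emph{product} of two independent applications of $G(I,l,m,\xi)$, i.e.\ four pairwise-separated singular subcubes split into two independent pairs; the hypothesis $\kappa<2\xi/(\xi+2d)$ is calibrated precisely for the resulting count $L^{4d}l^{-4\xi}\le L^{-2\xi}$ (the paper places all four inside one big cube via the event $\Omega_{\rm G}(z)$). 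Second, the absence of two well-separated singular subcubes does not give "at most one not-uniformly-regular small subcube": there may be an entire cluster of singular subcubes of diameter $\lesssim 8l$ (up to three such clusters under the paper's hypothesis). Crossing these clusters in the resolvent iteration requires covering them by non-touching containers of sidelengths $8l$, $16l+1$, $24l+2$ and demanding uniform non-resonance of the containers as well --- which is exactly why the paper's resonance event $\Omega_{\rm W}(x,y)$ ranges over subcubes of all sidelengths in $\mathfrak{L}\cup\{L\}$, not just $l$ and $L$. Relatedly, the per-step factor in your iteration should be $C_d l^{d-1}\euler^{-ml}$; the non-resonance factor $2L^{\zeta}$ enters only once, when the chain terminates, and the container factors $l_i^{\zeta}$ at most three times.
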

\begin{proof}
We define for $z \in \ZZ^d$ the event
\[
\begin{split}
 \Omega_{\rm G}(z)\colonequals\{& \omega \in \Omega: \ \forall E \in I \text{ there are no } 4 \text{ cubes }\cd{l}(z_1),\dots,\cd{l}(z_4) \subset \cd{L}(z)\\& \text{with } d(z_i,z_j)>8l \text{ for } i \neq j \text{ and }\\& \cd{l}(z_i)\text{ is not uniformly } (m,E)\text{-regular, for }i=1,\dots,4\}.
\end{split} 
\]
For $\omega \not\in \Omega_{\rm G}(z)$ there is some $E \in I$ for which 4 bad (= not uniformly $(m,E)$-regular) cubes $\cd{l}(z_i)$ with distance between their centers bigger than $8l$ exist. With $S\colonequals\{(z_1,\dots,z_4) \in \cd{L-\lfloor l \rfloor}^4(z): d( z_i,  z_j) > 8l, \text{ for } \ i \neq j\}$ we can write 
\[
\begin{split}
 \Omega^{\rm c}_{\rm G}(z) \subset \bigcup\limits_{(z_1,\dots,z_4)\in S} \bigl(&B_l(z_1,z_2,m,I) \cap B_l(z_3,z_4,m, I)\bigr).
\end{split}
\]
Since $B_l(z_1,z_2,m,I)$ and $B_l(z_3,z_4,m,I)$ are independent by \eqref{eq:independent}, 
we get with $\# S \leq (2L)^{4d}$
\[
 \PP(\Omega_{\rm G}^{\rm c}(z)) \leq \frac{(2L)^{4d}}{l^{4\xi}} = 2^{4d} L^{4(d-\frac{\xi}{\kappa})}.
\]
Since $4(d-\frac{\xi}{\kappa} )< -2\xi$ we can find $l_1^*=l_1^*(d,\xi,\kappa)$ such that for $l\geq l_1^*$
\begin{equation*}
 \PP(\Omega^{\rm c}_{\rm G}(z)) \leq \frac{1}{3}L^{-2\xi}.
\end{equation*}
Note that the event $\Omega_{\rm G}(z)$ merely means that there are at most 3 bad cubes 
(with sidelength $2\lfloor l \rfloor$) with sufficiently separated midpoints but, 
indeed, there might be many more bad cubes with midpoints in their neighborhoods.
 For a reason we will see later, we want to cover all bad cubes by bigger cubes, 
so called ``containers'', such that all sites outside these containers 
(but their centers still in the big cube $\cd{L}(z)$) are midpoints of uniformly $(m,E)$-regular cubes of sidelength $2\lfloor l \rfloor$.
 We use $\mathfrak{Ct}_i$ as notation for the $i$th container with sidelength $2 \lfloor l_i \rfloor$.
 The later application requires that the containers do not \emph{touch}. 
We say that two cubes $\cd{}, \cd{}'$ touch if they intersect or, 
in the case that they are disjoint, $\exists w \in \cd{}, w' \in \cd{}'$ 
with $\lVert w-w' \rVert_1 = 1$.
\par
Note that the event $\Omega_{\rm G}(z)$ ensures that we can find $z_1,z_2,z_3 \in \cd{L}(z)$ with $d(z_i,z_j) > 8l$ and
\[
 \bigl \{u \in \cd{L-\lfloor l\rfloor}(z) \colon \cd{l}(u) \text{ is bad} \bigr\} \subset \bigcup\limits_{j=1}^{3}\cd{8 l }(z_i).
\]
If none of the above $8l$-cubes touch, the $\cd{8l}(z_i)$ are already our containers. 
If two of the $8l$-cubes touch, we replace them by one $\cd{16l+1}(w)$ cube containing both of them. 
If $\cd{16l+1}(w)$ does not touch the remaining $8l$-cube, we choose these two cubes as our containers. 
Otherwise we choose only one container $\cd{24l+2}(u)$ containing all three original $8l$-cubes. 
By the construction we obtain $N\leq 3$ containers $\mathfrak{Ct}_i$, $i=1, \ldots,N$  
which do not touch and whose sidelengths $2\lfloor l_i \rfloor$, 
with $l_i \in \mathfrak{L}\colonequals\{8l,16l+1,24l+2\}$, satisfy the relation
\begin{equation} \label{eq:cont_length}
48l+3 \leq\sum_{i=1}^N (2l_{i} +1)\leq 48l+5.
\end{equation}
\par
Now we turn to the control of probabilities of resonances using the Wegner estimate. 
For fixed $x,y \in \mathbb{Z}^d$ with $\lVert x-y \rVert_{\infty} > 8L$ and $\zeta = \kappa(5d+ \lVert I_0 \rVert_1 + 2\xi) + 1$, we define
\begin{multline} 
\Omega_{\rm W}(x,y)
\colonequals \bigl\{\omega \in \Omega: 
\exists 
\cd{l_x}(x') \subset \cd{L}(x), \cd{l_y}(y') \subset \cd{L}(y) \text{ with} \\ l_x,l_y \in \mathfrak L\cup\{L\} 
\text{ and } \tilde{d}\bigl(\sigma(h^{\cd{l_x}(x')}_{\omega}),\sigma(h^{\cd{l_y}(y')}_{\omega})\bigr) < \min\{l_x , l_y\}^{-\zeta} \bigr\}.
\end{multline}
We want to get an upper bound for the probability of $\Omega_{\rm W}(x,y)$. By subadditivity we have
\[
 \PP(\Omega_{\rm W}(x,y)) \leq \sum\limits_{l_x , l_y , x' , y'} \PP \bigl( A(\cd{l_x}(x'),\cd{l_y}(y') , \min\{l_x , l_y\}^{-\zeta}) \bigr) ,
\]
where the sum runs over $l_x , l_y , x'$ and $y'$ 
satisfying $\cd{l_x}(x') \subset \cd{L}(x), \cd{l_y}(y') 
\subset \cd{L}(y) \text{ with } l_x,l_y \in \mathfrak L\cup\{L\}$. 
Recall that the event $A(\cd{l_x}(x'),\cd{l_y}(y'),\epsilon)$ 
is defined in Eq.~\eqref{eq:eventA}. Since $\cd{4l_x}(x')\cap\cd{4l_y}(y')= \emptyset$, Proposition~\ref{prop:unif_wegner_type} provides an upper bound on the probability of $A(\cd{l_x}(x'),\cd{l_y}(y') ,\allowbreak \min\{l_x , l_y\}^{-\zeta})$. This gives
\begin{multline*}
 \PP(\Omega_{\rm W}(x,y)) \\ \leq \sum\limits_{l_x , l_y , x' , y'} C_1 (2 \max \{ l_x,l_y \} + 1)^{3d + \lVert I_0 \rVert_1} \bigl[ \min\{l_x , l_y\}^{-\zeta} + C_2 \euler^{-3 \min \{l_x,l_y\} \alpha / 2} \bigr] .
\end{multline*}
There is an $l_2^* = l_2^* (\kappa)$ such that $24l+2 \leq L$ for $l \geq l_2^*$. 
Recall that $l = L^{1/\kappa}$. 
For $l \geq l_2^*$ and $l_x,l_y \in \mathfrak L\cup\{L\}$ 
we have $8l \leq l_x,l_y \leq L$ 
as well as
\begin{align}
 \PP(\Omega_{\rm W}(x,y)) &\leq 16 (2L+1)^{2d} C_1 (2 L + 1)^{3d+\lVert I_0 \rVert_1} \bigl[ l^{-\zeta} + C_2 \euler^{-12l \alpha} \bigr] \nonumber \\[1ex]
 &\leq 16 C_1 (2 L + 1)^{5d+\lVert I_0 \rVert_1} \bigl[ L^{-\zeta / \kappa} + C_2 \euler^{-12l \alpha} \bigr] \nonumber \\[1ex]
 &\leq 16 C_1 (2 L + 1)^{5d+\lVert I_0 \rVert_1 - \zeta / \kappa} 
 + 16 C_1 C_2 (2 L + 1)^{5d+\lVert I_0 \rVert_1} \euler^{-12l \alpha}  . \label{eq:l3*}
\end{align}
Since $\zeta = \kappa (5d + \lVert I_0 \rVert_1 + 2 \xi)+1$ we find $l_3^* = l_3^* (\xi , \kappa , d , u, \rho)$, 
such that for $l \geq l_3^*$ we have
\[
 \PP(\Omega_{\rm W}(u_1,u_2))\leq \dfrac{1}{3}L^{-2\xi}.
\]
For all $x, y \in \ZZ^d$ with $\lVert x-y\rVert_{\infty} > 8L$ 
we get due to subadditivity of the measure
\[
 \PP(\Omega^{\rm c}_G(x) \cup \Omega^{\rm c}_G(y) \cup \Omega_{\rm W}(x,y)) \leq L^{-2\xi}
\]
and thus 
\[
  \PP(\Omega_{\rm G}(x) \cap \Omega_{\rm G}(y) \cap \Omega_{\rm W}^{\rm c}(x,y)) \geq 1-L^{-2\xi} .
\]
The probability of this event becomes bigger with growing $L$. 
\par
If $\omega \in \Omega_{\rm W}^{\rm c}(x,y)$ and $E \in \RR$, we show that
for one of the cubes $\cd{L}(x)$ or $\cd{L}(y)$, denoted by
$\cd{L}(z)$, all contained cubes $\cd{l_z}(z') \subset \cd{L} (z)$
with $l_z \in \mathfrak L \cup \{L \}$ are uniformly $E$-NR.
If all such cubes both in $\cd{L} (x)$ and $\cd{L} (y)$ are uniformly $E$-NR there is nothing to prove. Otherwise choose $\tilde l \in \mathfrak{L} \cup \{L\}$ maximal such that there exists $\cd{\tilde l} (\tilde z)$ in $\cd{L} (x)$ or $\cd{L} (y)$ with
\[
 \tilde d \Bigl( E , \sigma \bigl( h_{\omega}^{\cd{\tilde l} (\tilde z)} \bigr) \Bigr) < \frac{1}{2} \tilde l^{-\zeta} .
\]
Without loss of generality let $\tilde z \in \cd{L} (y)$. 
By definition all cubes $\cd{l_x} (x') \subset \cd{L} (x)$ with $l_x > \tilde l$ are uniformly $E$-NR. 
Now choose $\cd{l_x} (x') \subset \cd{L} (x)$ with $l_x \leq \tilde l$. 
Since $\omega \in \Omega_{\rm W}^{\rm c}(x,y)$
\begin{align*}
 \tilde d \Bigl(E , \sigma \bigl(h_\omega^{\cd{l_x}(x')}\bigr) \Bigr) 
&\geq \tilde d \Bigl(\sigma \bigl(h_\omega^{\cd{\tilde l}(\tilde z)}\bigr) , \sigma \bigl(h_\omega^{\cd{l_x}(x')}\bigr)\Bigr) 
-  \tilde d \Bigl( E , \sigma \bigl(h_\omega^{\cd{\tilde l}(\tilde z)} \bigr)\Bigr)  \\[1ex]
& \geq \min\{l_x , \tilde l\}^{-\zeta} - \frac{1}{2} \tilde l^{-\zeta} \geq \frac{1}{2} l_x^{-\zeta} ,
\end{align*}
i.e.\ $\cd{l_x} (x')$ is uniformly $E$-NR. 
\par
For $\Vert x-y \Vert_\infty > 8L$ and $\omega \in \Omega_{\rm G}(x) \cap
\Omega_{\rm G}(y) \cap \Omega_{\rm W}^{\rm c}(x,y)$, the two $L$-cubes $\cd{L}(x)$
and $\cd{L}(y)$ have both no four bad $l$-cubes
with distance among them bigger than $8l$ and for at least one of the
$L$-cubes all bad cubes are uniformly $\mathrm{E-NR}$.
\par
From now on we fix $x,y \in \ZZ^d$ with $\lVert x-y \rVert_\infty > 8L$
and $\omega \in \Omega_{\rm G}(x) \cap \Omega_{\rm G}(y) \cap \Omega_{\rm W}^{\rm
  c}(x,y)$. Without loss of generality we assume that the $L$-cube
$\cd{L}(z),\ z \in \{x,y\}$ with the uniformly $\mathrm{E-NR}$ cubes
is equal to $\cd{L}(x)$.  In the next step we show 
that $\cd{L}(x)$ is itself a uniformly $(m_L,E)$-regular cube, with
$m_L$ satisfying \eqref{eq:m_L} in the theorem.  To do so, we need a
sufficiently good estimate on $|G_\omega^{\cd{L}(x)}(E;x,v)|$ for all
$v \in \partial^{i}\cd{L}(x)$.
\par
Let us first recall the geometric resolvent identity. For all $u \in
\mathfrak{C} \subset \cd{L}(x), \ v \in \cd{L}(x)\setminus
\mathfrak{C}$, the following identity holds: 
\begin{equation*}
 G_\omega^{\cd{L}(x)}(E;u,v) = \sum\limits_{\substack{(w,w') \in \partial^{\rm b}\mathfrak{C}}}G_\omega^{\mathfrak{C}}(E;u,w)G_\omega^{\cd{L}(x)}(E;w',v).
\end{equation*}
Here we use the convention that $G_\omega^{\Gamma} (E;x,y) = 0$ if $x \not \in \Gamma \subset \ZZ^d$ or $y \not \in \Gamma$.
With $w_0 \in (\partial^{o}\cd{})\cap\cd{L}(x)$ such that
\[|G_\omega^{\cd{L}(x)}(E;w_0,v)|=\sup\limits_{w' \in (\partial^{\rm o}\mathfrak{C})\cap\cd{L}(x)}|G_\omega^{\cd{L}(x)}(E;w',v)|,\] 
we obtain
\begin{equation}\label{eq:res_id_general}
 \lvert G_\omega^{\cd{L}(x)}(E;u,v) \rvert \leq \left[\sum\limits_{(w,w') \in \partial^{\rm b}\mathfrak{C}} \lvert G_\omega^{\mathfrak{C}}(E;u,w) \rvert \right] \lvert G_\omega^{\cd{L}(x)}(E;w_0,v) \rvert .
\end{equation}
\par
Now we set $u_0\colonequals x$ and fix $v \in \partial^{\rm i}\cd{L}(x)$. Using \eqref{eq:res_id_general} we recursively introduce a sequence $u_1, u_2,\dots \in \cd{L}(x)$ by distinguishing two cases at each step. 
Given $u_k \in \cd{L}(x)$, we construct $u_{k+1}$ according to the following two cases:
\begin{enumerate}[(a)]
\item 
\label{case_a_res_id}
$\cd{l}(u_k)$ is a uniformly $(m,E)$-regular cube and $v \not \in \cd{l} (u_k)$. Using~\eqref{eq:res_id_general} with $\cd{}~= ~\cd{l}(u_k) \cap \cd{L} (x)$, and setting $u_{k+1}\colonequals w_0$, we obtain
\[
 |G_\omega^{\cd{L}(x)}(E;u_k,v)| \leq  2^d d (l+1)^{d-1}\euler^{-ml} |G_\omega^{\cd{L}(x)}(E;u_{k+1},v)|.
\]
Note that $\lVert u_{k+1}- u_k\rVert_{\infty} \leq l+1$. In this case we set 
\[Z(k)~\colonequals~\exp[-ml + \ln(2^dd(l+1)^{d-1})].\]
There is an $l_4^*=l_4^*(d,\beta)$ such that $Z(k)<1$ for all $l\geq l_4^*$.
\item 
\label{case_b_res_id}
$\cd{l}(u_k)$ is not uniformly $(m,E)$-regular. This means that $u_k \in \mathfrak{Ct}_i$ for some $i$, and we assume that $v \not \in \mathfrak{Ct}_i$. Then, using \eqref{eq:res_id_general} with $\cd{} = \mathfrak{Ct}_i \cap \cd{L} (x)$, we obtain
\[
 \lvert G_\omega^{\cd{L}(x)}(E;u_k,v) \rvert \leq  2^{d+1} d (l_i+1)^{d-1}l_i^{\zeta} \lvert G_\omega^{\cd{L}(x)}(E;w_0,v) \rvert,
\]
with $\cd{l}(w_0)$ a uniformly $(m,E)$-regular cube. If we assume further that $v \not \in \cd{l} (w_0)$, we can apply \eqref{eq:res_id_general} again with $\cd{} = \cd{l}(w_0) \cap \cd{L} (x)$, and obtain finally 
\[
  |G_\omega^{\cd{L}(x)}(E;u_k,v)| \leq Z(k)|G_\omega^{\cd{L}(x)}(E;u_{k+1},v)|,
\]
with 
\[
 Z(k)\colonequals 2^{2d+1}d^2[(l_i+1)(l+1)]^{d-1}l_i^{\zeta}\euler^{-ml} .
\]
Note that here $\lVert u_{k+1} - u_{k}\rVert_{\infty} \leq 2l_i + l+3$. A straightforward calculation shows that $Z(k)~<~1$ is satisfied if
\begin{equation}
m > \frac{\ln C(\zeta,d)}{l} + (2d-2+\zeta)\frac{\ln l}{l},
\label{eq:m_lower_bound}
\end{equation}
with a constant $C(\zeta,d) > 0$ only depending on $\zeta$ and $d$.
The assumption $m>l^{\beta-1}$ of the theorem guarantees \eqref{eq:m_lower_bound} for $l\geq l_5^*$ with a suitably chosen $l_5^*=l_5^*(d,\beta,\zeta)$.
\end{enumerate}
If none of the above two cases applies for a given $u_k$, we cannot construct $u_{k+1}$. We assume now that it is possible to perform $n$ steps of the recursion with associated sites $u_0,u_1,\dots, u_n \in \cd{L}(x)$. Applying the
non-resonance of $\cd{L}(x)$, we obtain
\begin{equation}
  |G_\omega^{\cd{L}(x)}(E;x,v)| \leq \left(\prod_{k=0}^{n-1}Z(k)\right)|G_\omega^{\cd{L}(x)}(E;u_n,v)|\leq \left(\prod_{k=0}^{n-1}Z(k)\right)2L^{\zeta}.
\label{eq:greens_function_ub_enr}
\end{equation}
If it is possible to perform arbitrarily many steps of the iteration
without leaving $\cd{L}(x)$, it follows from
\eqref{eq:greens_function_ub_enr} for $L\geq l_4^*,l_5^*$ that
$|G_\omega^{\cd{L}(x)}(E;x,v)| =0$. Otherwise, the iteration
terminates after finitely many steps, i.e., for some $k \in \NN$ the
site $u_k \in \cd{L}(x)$ is so close to the boundary of $\cd{L}(x)$
such that the assumption of neither (a) nor (b) are satisfied.

In this latter case, we can give a lower bound on the number $n_1$
of case (\ref{case_a_res_id}) steps
performed before the recursion ends.
Using the estimates for $\lVert u_{k+1}-u_{k}\rVert_{\infty}$ and Eq.~\eqref{eq:cont_length}, we obtain
\[
 n_1 \geq \frac{L-l-\sum_{i=1}^N(2l_i+l+3)}{l+1} \geq \frac{L-63l}{l+1}.
\]
Using \eqref{eq:greens_function_ub_enr} and disregarding the case (\ref{case_b_res_id}) steps we have
\begin{align*}
 |G_\omega^{\cd{L}(x)}(E;x,v)| &\leq \exp\left[\left(\frac{L-63l}{l+1}\right)\left(-ml + \ln(2^dd(l+1)^{d-1})\right) +\ln( 2L^\zeta)\right]\\
&\leq \exp\left[ -mL + m\frac{L}{l} + 63ml + \frac{L}{l}\ln(2^d d (l+1)^d) +  \ln( 2L^\zeta)\right]\\
&\leq \exp\left[ -m_L L\right],
\end{align*}
with 
\[
 m_L\colonequals m \Bigl(1-2^{\frac{1}{\kappa}} L^{-\frac{1}{\kappa}} -63L^{\frac{1}{\kappa} - 1} \Bigr) - 2^{\frac{1}{\kappa}}L^{-\frac{1}{\kappa}}\ln(4^d d L^{\frac{d}{\kappa}}) - \frac{\ln(2L^{\zeta})}{L},
\]
where we used $l^\kappa = L$.
\par
Now we choose $\gamma \in ((1-\beta) / \kappa ,1-1 /  \kappa)$. This is possible because of $2-\kappa < \beta$. Since $1-1 / \kappa  < 1 / \kappa$, 
we have
  \[
  m_L \geq m(1-L^{-\gamma}) - L^{-\gamma},
  \]
  for all $L^{\frac{1}{\kappa}}\geq l \geq l_6^*$ with appropriate
  $l_6^*=l_6^*(d,\kappa,\zeta,\gamma)$.  Using $m\geq l^{\beta-1}$ we
  conclude
  \[
  m_L \geq
  L^{-\frac{1-\beta}{\kappa}}-L^{-\gamma-\frac{1-\beta}{\kappa}}-L^{-\gamma}.
  \]

Since $\beta > 1-\gamma \kappa$ we can find $l_7^*=l_7^*(\beta,\gamma,\kappa)$ such that for $L^{1 / \kappa} \geq l \geq l_7^*$ we have
\[
 m_L > L^{\beta -1}
\]
The theorem follows with $l^*\colonequals\max({l_1^*,\dots,l_7^*})$.
\end{proof}
\subsection{Localization; proof of Theorem~\ref{thm:loc_under_ini}}
In Section~\ref{sec:induction} we carried out the induction step of the multiscale analysis, i.e.\ that if $G(I , l_1 , m_1 , \xi)$ holds for some $l_1 >0$, then $G(I , l_2 , m_2 , \xi)$ holds on some larger scale $l_2 > l_1$. Once an induction anchor is given, one obtains the estimate $G(I , l_k , m_k , \xi)$ for an increasing sequence of length scales $l_k$. It is crucial for concluding localization that the sequence $m_k$ is bounded from below by some positive $m$. The induction anchor is provided by the so-called initial scale estimate formulated in the following assumption.
\par
Before we define the initial scale estimate let us define a new length scale $\overline l = \overline l (\beta , \kappa, q , m_0) \in \NN$, depending on $\beta, q \in (0,1)$, $\kappa \in (1,2)$ and $m_0 > 0$, namely
\begin{equation} \label{eq:lbar}
 \overline l = \overline l (\beta , \kappa , q , m_0) = \left( \frac{(1-q) m_0}{(1-q)m_0 + m_0 + 1} \right)^{-\kappa / (1-\beta)} .
\end{equation}
\begin{definition} \label{ass:ini}
Let $I \subset \RR$. We say that the \emph{initial scale estimate} holds in $I \subset \RR$, if 
\[
 G (I , l_0 , m_0 , \xi)
\]
is satisfied for some 
\begin{enumerate}[(i)]
 \item $\xi > 2d$, $\kappa \in (1,2\xi/(\xi + 2d))$, $\beta \in (2-\kappa , 1)$, and
 \item $q \in (0,1)$, $m_0 > 0$ and $l_0 >1$ satisfying $l_0 \geq \max\{ l^* , \overline l \}$ and $m_0 > l_0^{\beta - 1}$. 
\end{enumerate}
Here $l^* = l^* (d,\xi,\kappa,\beta,u,\rho)$ is given by Theorem~\ref{thm:induction} and $\overline l = \overline l (\beta , \kappa , q , m_0)$ is as in Eq.~\eqref{eq:lbar}.
\end{definition}
Note that $\overline l$ depends on $m_0$. Hence, if one has verified $G(I,l_0,m_0,\xi)$ for some $l_0 \geq l^*$ and $m_0 > l_0^{\beta-1}$ one still has to check whether $l_0 \geq \overline l$. However, if one has verified $G(I,l_0,m_0,\xi)$ for some $m_0 > 0$ and all $l_0 > 1$, then one just has to choose $l_0$ sufficiently large to verify the initial scale estimate.
\begin{theorem}\label{thm:msaoutput}
Let Assumptions~\ref{ass:exp} and \ref{ass:bv} be satisfied, $I \subset \RR$, and assume that the initial scale estimate holds in $I$. Set $m_\infty = q m_0$. Then
\[
 G (I , l_k , m_\infty , \xi)
\]
holds for all $k \in \NN_0$. Here the sequence $l_k$, $k \in \NN_0$, is defined by 
\[
 l_{k + 1} = l_k^\kappa , \quad k \in \NN_0,
\]
and $l_0$, $m_0$, $q$, $\kappa$ and $\xi$ are given through the initial scale estimate.
\end{theorem}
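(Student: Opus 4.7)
The plan is to iterate Theorem~\ref{thm:induction} starting from the initial-scale estimate $G(I,l_0,m_0,\xi)$, tracking the decay rates $m_k$ produced along the sequence $l_{k+1}=l_k^\kappa$ and showing that they stay uniformly above $m_\infty = qm_0$. Given $G(I,l_k,m_k,\xi)$ with $m_k > l_k^{\beta-1}$ and $l_k \geq l^*$, Theorem~\ref{thm:induction} furnishes $G(I,l_{k+1},m_{k+1},\xi)$ together with the output bound $m_{k+1} > l_{k+1}^{\beta-1}$. Since $l_{k+1} = l_k^\kappa \geq l_k \geq l^*$, these are precisely the hypotheses required to apply Theorem~\ref{thm:induction} on the next scale, so the induction can be continued indefinitely. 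Writing $\epsilon_{k+1} := l_{k+1}^{-(1-\beta)/\kappa} = l_k^{-(1-\beta)}$, the quantitative output reads
\[
m_{k+1} > m_k\rkl{1-\epsilon_{k+1}} - \epsilon_{k+1},
\]
so $\{m_k\}$ is non-increasing. Observe also that $G(I,l,m,\xi)$ is monotone in $m$ (smaller $m$ weakens both the definition of $(m,E)$-regular and, consequently, of the event $B_l$), so $G(I,l_k,m_k,\xi)$ with $m_k \geq m_\infty$ automatically implies $G(I,l_k,m_\infty,\xi)$.

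To extract the lower bound $m_k \geq qm_0$, I would add $1$ to both sides of the recursion and iterate to obtain
\[
m_k + 1 \geq (m_0 + 1)\prod_{j=1}^k(1-\epsilon_j).
\]
By the elementary inequality $\prod_j(1-\epsilon_j) \geq 1 - \sum_j \epsilon_j$, the whole argument reduces to the summability estimate
\[
\sum_{j=1}^\infty \epsilon_j \leq \frac{(1-q)m_0}{m_0+1}.
\]
Since $l_j = l_0^{\kappa^j}$ with $\kappa > 1$, one has $\epsilon_{j+1} = \epsilon_j^\kappa$, so the sequence $\{\epsilon_j\}$ decays super-exponentially; the Bernoulli inequality $\kappa^j \geq 1 + j(\kappa-1)$ gives $\epsilon_{j+1} \leq \epsilon_1 \cdot \epsilon_1^{j(\kappa-1)}$ for all $j\geq 0$, and a geometric-series bound yields $\sum_{j\geq 1}\epsilon_j \leq \epsilon_1/(1 - \epsilon_1^{\kappa-1})$. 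The threshold $\overline l$ in Eq.~\eqref{eq:lbar} is engineered precisely so that $l_0 \geq \overline l$ makes this upper bound dominated by $(1-q)m_0/(m_0+1)$, closing the argument.

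The hard part is really only the bookkeeping in this last step: one has to verify that the specific algebraic form of $\overline l$ in Eq.~\eqref{eq:lbar} indeed supplies the required summability inequality, which is an elementary but slightly tedious calculation with the parameters $\beta,\kappa,q,m_0$. No additional conceptual ingredient is required beyond Theorem~\ref{thm:induction} itself and the monotonicity of $G(I,l,m,\xi)$ in $m$.
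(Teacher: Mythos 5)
Your overall strategy is the paper's: iterate Theorem~\ref{thm:induction} (noting that the output bound $m_{k+1}>l_{k+1}^{\beta-1}$ from \eqref{eq:m_L} and $l_{k+1}\geq l_k\geq l^*$ keep the hypotheses valid at every scale), use the monotonicity of $G(I,l,m,\xi)$ in $m$, and reduce everything to the summability bound $\sum_{j\geq 1}\epsilon_j\leq (1-q)m_0/(m_0+1)$ with $\epsilon_j=l_j^{-(1-\beta)/\kappa}$. Your product form $m_k+1\geq (m_0+1)\prod_{j}(1-\epsilon_j)$ followed by $\prod_j(1-\epsilon_j)\geq 1-\sum_j\epsilon_j$ is an exact equivalent of the paper's telescoping $\sum_k (m_k-m_{k+1})\leq (m_0+1)\sum_k\epsilon_{k+1}$: both give $m_k\geq m_0-(m_0+1)\sum_{j\geq1}\epsilon_j$, so up to this point the two arguments coincide.

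The step that does not close as you describe it is the last one. Your Bernoulli estimate yields $\sum_{j\geq1}\epsilon_j\leq \epsilon_1/(1-\epsilon_1^{\kappa-1})$ with $\epsilon_1=l_0^{-(1-\beta)}$, but $\overline l$ in \eqref{eq:lbar} is not engineered for this quantity. Writing $r:=l_0^{-(1-\beta)/\kappa}$, one has $\epsilon_{k+1}=r^{\kappa^{k+1}}$, and \eqref{eq:lbar} is calibrated so that $l_0\geq\overline l$ gives precisely $r/(1-r)\leq (1-q)m_0/(m_0+1)$; i.e., the paper compares the sum to the geometric series $\sum_{k\geq 1}r^{k}$. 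Your bound instead has the denominator $1-\epsilon_1^{\kappa-1}=1-l_0^{-(1-\beta)(\kappa-1)}$, which degenerates to $0$ as $\kappa\downarrow 1$ (equivalently $\beta\uparrow 1$) at fixed $l_0$, whereas the target $(1-q)m_0/(m_0+1)$ is a fixed number less than $1$. Consequently, for $\kappa$ close to $1$ the inequality $\epsilon_1/(1-\epsilon_1^{\kappa-1})\leq (1-q)m_0/(m_0+1)$ is simply false for the $\overline l$ of \eqref{eq:lbar}, and the ``elementary but tedious bookkeeping'' you defer cannot be completed along your route; to match the stated threshold you must organize the comparison around $r/(1-r)$ as the paper does. (For full disclosure, the paper's own term-by-term comparison $r^{\kappa^{k+1}}\leq r^{k+1}$ tacitly requires $\kappa^{k+1}\geq k+1$, which is itself delicate for $\kappa$ near $1$; but the calibration of $\overline l$ is unambiguously to $r/(1-r)$, not to your expression.)
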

\begin{proof}
By assumption of our theorem, the hypothesis of Theorem~\ref{thm:induction} is satisfied with $l = l_0$ and $m = m_0$. By an inductive application of Theorem~\ref{thm:induction} we obtain the estimate $G(I,l_k,m_k,\xi)$ with a decreasing sequence $m_k$, $k \in \NN_0$, satisfying
\[
 m_{k+1} \geq m_k \bigl(1-l_{k+1}^{-(1-\beta)/\kappa} \bigr) - l_{k+1}^{-(1-\beta)/\kappa} .
\]
Our theorem now follows if
\begin{equation} \label{eq:masseverlust}
 \sum_{k=0}^\infty m_k - m_{k+1} \leq m_0 - m_\infty = (1-q) m_0 .
\end{equation}
Indeed, we can estimate
\begin{align*}
  \sum_{k=0}^\infty m_k - m_{k+1} & = \sum_{k=0}^\infty m_k l_{k+1}^{-(1-\beta)/\kappa} + \sum_{k=0}^\infty l_{k+1}^{-(1-\beta)/\kappa} \leq (m_0 + 1) \sum_{k=0}^\infty  l_{k+1}^{-(1-\beta)/\kappa} .
\end{align*}
Since this is a geometric series we obtain
\[
  \sum_{k=0}^\infty m_k - m_{k+1} \leq (m_0 + 1) \frac{l_0^{-(1-\beta)/\kappa}}{1 - l_0^{-(1-\beta)/\kappa}} .
\]
By assumption we have $l_0 \geq \overline l$, and by definition of $\overline l$ we obtain Ineq.~\eqref{eq:masseverlust} and hence the statement of the theorem.
\end{proof}
Next we cite \cite[Theorem~2.3]{DreifusK-89}. More precisely, we will state a slight generalization, since \cite[Theorem~2.3]{DreifusK-89} was stated for the case $u = \delta_0$ only. In particular, the proof applies directly to the case of general single-site potentials $u$ and the measure $\mu$. Even more, this result holds true for arbitrary potentials, as long as the resulting family is a family of self-adjoint operators. To be more precise, consider the family of self-adjoint operators 
\[
A_\omega : \ell^2 (\ZZ^d) \to \ell^2 (\ZZ^d), \quad \omega \in \Omega ,
\]
where the index $\omega$ is an element of some probability space $(\tilde \Omega , \tilde{\mathcal{F}} , \tilde \PP)$. We assume that the map $\omega \mapsto \langle \phi , (A_\omega - z)^{-1} \psi \rangle$ is measurable for all $\psi,\phi \in \ell^2 (\ZZ^d)$ and all $z \in \CC \setminus \RR$. As supplied before we use similar notation for the restricted operators $A_\omega^{\cd{}} : \ell^2 (\cd{}) \to \ell^2 (\cd{})$ and, with some abuse of notation, the symbol $G_\omega^{\cd{}} (E,u,w) = \langle \delta_u (A_\omega^{\cd{}} - E)^{-1} \delta_w \rangle$ for the corresponding Green function. Moreover, the definition of $(m,E)$-regular and singular from Definition~\ref{def:mereg} holds for $A_\omega^{\cd{}}$ in an analogue way.
\begin{theorem} \label{thm:vDK-2.3}
Consider the family of operators $(A_\omega)_{\omega \in \tilde \Omega}$, let $a\in \NN$, $c \in \NN_0$, $I\subset \RR$ be an interval, $\xi>d$, $ l_0>1$, $\kappa \in (1,2\xi/d)$ and $m>0$. Let moreover $l_k$, $k \in \NN_0$, be a sequence of integers such that for all $k \in \NN_0$
\[
 l_{k+1} = l_k^\kappa .
\]
Suppose that for any $k \in \NN_0$ and any $x,y \in \ZZ^d$ with $\lVert x-y\rVert_\infty \geq al_k + c$
\begin{equation*}
 \tilde\PP \bigl( \{\omega \in \tilde\Omega \colon \exists \, E \in I \colon \text{$\cd{l_k}(x)$ and $\cd{l_k}(y)$ is $(m,E)$-singular} \} \bigr) \leq l_k^{-2\xi} .
\end{equation*}
Then, for almost all $\omega\in\tilde\Omega$, $\sigma_{c} (A_\omega) \cap I = \emptyset$ and the eigenfunctions corresponding to the eigenvalues of $A_\omega$ in $I$ decay exponentially.
\end{theorem}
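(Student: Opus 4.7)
The plan is to follow the classical von Dreifus--Klein strategy in three steps: a Borel--Cantelli argument producing a full-measure set of configurations with good geometric structure at every scale; a deterministic resolvent-iteration argument yielding exponential decay of polynomially bounded generalized eigenfunctions on such configurations; and a concluding spectral-theoretic argument using a Berezanskii/Simon-type polynomial-boundedness theorem. The key observation is that the original argument in \cite[Theorem~2.3]{DreifusK-89} uses the Anderson structure only through the measurability of $\omega \mapsto A_\omega$, the self-adjointness of each $A_\omega$, and a pair-singularity hypothesis on the scales $l_k$; generalizing it amounts to rewriting the proof and checking that the hypothesis formulated here suffices throughout.

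First I would fix a base site $x_0 \in \ZZ^d$ and, for each $k$, define $\Omega_k(x_0)$ as the event that there is some $y$ with $a l_k + c \leq \lVert y - x_0\rVert_\infty \leq 2 l_{k+1}$ and some $E \in I$ for which both $\cd{l_k}(x_0)$ and $\cd{l_k}(y)$ are $(m,E)$-singular. Subadditivity and the hypothesis give $\tilde\PP(\Omega_k(x_0)) \leq C\, l_k^{d\kappa - 2\xi}$, which is summable in $k$ because $\kappa < 2\xi/d$ and $l_k = l_0^{\kappa^k}$ grows super-exponentially. Borel--Cantelli, together with a countable intersection over $x_0 \in \ZZ^d$, then yields a full-measure set $\tilde\Omega^* \subset \tilde\Omega$ on which, for every $x_0$, there is a $k_0(\omega,x_0)$ such that for every $k \geq k_0$ and every $E \in I$, either $\cd{l_k}(x_0)$ is $(m,E)$-regular, or every $\cd{l_k}(y)$ with $y$ in the annulus above is $(m,E)$-regular.

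Next, for $\omega \in \tilde\Omega^*$, $E \in I$, and a polynomially bounded generalized eigenfunction $\psi$ of $A_\omega$ at eigenvalue $E$ with $\psi(x_0) \neq 0$, the identity obtained by combining $A_\omega\psi = E\psi$ with the geometric resolvent identity gives
\[
\psi(x_0) \;=\; - \!\!\!\sum_{(w,w')\in \partial^{\rm b}\cd{l_k}(x_0)} \!\!\! G_\omega^{\cd{l_k}(x_0)}(E;x_0,w)\,\psi(w').
\]
Regularity of $\cd{l_k}(x_0)$ for infinitely many $k$ would force $|\psi(x_0)| \leq C l_k^{d-1+q}\euler^{-m l_k} \to 0$, contradicting $\psi(x_0) \neq 0$; so the Borel--Cantelli alternative forces regularity of every $l_k$-cube in the surrounding annulus once $k \geq k_0(\omega,x_0)$. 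An outward iteration of the same identity from any remote $x$, at each step replacing a site near the center of a regular $l_k$-cube by a boundary site, then yields $|\psi(x)| \leq \euler^{-m'\lVert x - x_0\rVert}$ for any $m' < m$, provided $k$ is chosen so that $l_k$ absorbs the accumulated polynomial prefactors. Finally, a Berezanskii/Simon-type theorem asserts that the spectral measure of $A_\omega$ is concentrated on energies admitting polynomially bounded generalized eigenfunctions; combined with the exponential decay just established, this gives pure point spectrum in $I$ with exponentially decaying eigenfunctions, i.e.\ $\sigma_{\rm c}(A_\omega) \cap I = \emptyset$.

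The main obstacle will be the bookkeeping in the outward iteration of the third step: one must choose a sequence of base points $y_j$ so that the iterated sites stay inside the annulus of regular $l_k$-cubes for $n \sim \lVert x - x_0\rVert / l_k$ steps, while simultaneously verifying that the product of polynomial prefactors $(2d(l_k+1)^{d-1})^n$ is strictly dominated by $\euler^{-m n l_k}$. This is precisely the delicate but well-known geometric argument of \cite{DreifusK-89}, and the proof reduces to checking that it transcribes essentially verbatim once one has the pair-singularity hypothesis in the form given by the present theorem; the crucial point is that this hypothesis nowhere uses that $A_\omega$ is of discrete alloy-type, only the measurability of $\omega\mapsto A_\omega$ and self-adjointness of each $A_\omega$.
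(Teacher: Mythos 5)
Your proposal follows essentially the same route as the paper's proof: a Borel--Cantelli argument over pair-singularity events indexed by annuli around each $x_0$, the observation that $\psi(x_0)\neq 0$ forces the center cube $\cd{l_k}(x_0)$ to be $(m,E)$-singular for all large $k$ (hence all annulus cubes are regular), an outward iteration of the boundary-value/geometric-resolvent identity, and the Berezanskii/Simon generalized-eigenfunction theorem to conclude pure point spectrum in $I$ with exponentially decaying eigenfunctions. The one detail to adjust is the outer radius of your annulus: for general $a\geq 3$ the choice $2l_{k+1}$ leaves uncovered shells between $2l_{k+1}$ and $a l_{k+1}+c$ at every scale, so it should be taken of the form $b(al_{k+1}+c)$ with a suitable integer $b$ (the paper chooses $b>(1+\rho)/(1-\rho)$), which is exactly part of the bookkeeping you defer to the von Dreifus--Klein argument.
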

\begin{proof}
 Let $b$ be a positive integer to be chosen later on. For $x_0 \in \ZZ^d$ and $k \in \NN_0$ let
\[
 A_{k+1} (x_0) = \cd{b(a l_{k+1} + c)}(x_0) \setminus \cd{al_{k} + c}(x_0) .
\]
Define the event
\begin{multline*}
 E_k (x_0 ) = \{\omega \in \tilde\Omega \colon \text{$\cd{l_k}(x_0)$ and $\cd{l_k}(x)$ are $(m,E)$-singular} \\ \text{for some $E \in I$ and some $x \in A_{k+1}(x_0)$}\} .
\end{multline*}
By construction we have for each $x \in A_{k+1}(x_0)$ that $\lVert x-x_0 \rVert_\infty > a l_k + c$. Hence, we obtain by our hypothesis
\begin{align*}
 \tilde\PP \bigl( E_k (x_0) \bigr) &\leq \!\!\!\! \sum_{x \in A_{k+1}(x_0)} \!\!\!\! l_k^{-2\xi} 
 &\leq \frac{(2(bal_{k+1} + bc) + 1)^d}{l_k^{2\xi}} \leq \frac{(2ba + 2bc l_0^{-1} + l_0^{-1})^d}{l_k^{2\xi - \kappa d}}
\end{align*}
for all $k \in \NN_0$. Since $\kappa d < 2 \xi $ we have $\sum_{k=0}^\infty \tilde\PP ( E_k (x_0) )  < \infty$. It follows from Borel Cantelli Lemma that for each $x_0 \in \ZZ^d$ we have $\tilde\PP \{ E_k (x_0)$ occurs infinitely often$\}=0$. Since a countable union of sets of measure zero has measure zero, we obtain
\[
 \tilde\PP \bigl(\{\omega \in \tilde\Omega \colon \exists \, x_0 \in \ZZ^d \colon \text{$E_k (x_0)$ occurs for infinitely many $k \in \NN$} \}\bigr) = 0 .
\]
If we let 
\[
\tilde\Omega_0 = \{\omega \in \tilde\Omega \colon \text{for all $x_0 \in \ZZ^d$, $E_k (x_0)$ occurs only finitely many times} \} , 
\]
we have $\tilde\PP (\tilde\Omega_0) = 1$. In particular, for each $\omega \in \tilde\Omega_0$ and $x_0 \in \ZZ^d$ there is $k_1 = k_1 (\omega, x_0) \in \NN$ such that if $k \geq k_1$ then $E_k (x_0)$ does not occur.
\par
Now let $\omega \in \tilde\Omega_0$, $E \in I$ be a generalized eigenvalue of $A_\omega$ with the corresponding non-zero polynomially bounded generalized eigenfunction $\psi$, i.e.\ $A_\omega \psi = E \psi$, $\lvert \psi (x) \rvert \leq C (1 + \lVert x \rVert)^t$ for some positive constant $C$ and positive integer $t$. We choose $x_0 \in \ZZ^d$ such that $\psi (x_0) \not = 0$.
If $\cd{l_k} (x_0)$ is $(m,E)$-regular, then $$E \not \in \sigma \Bigl(A_\omega^{\cd{l_k} (x_0)}\Bigr)$$ and therefore we can recover $\psi$ from its boundary values, i.e.
\begin{align} \label{eq:boundary}
 \lvert \psi (x_0) \rvert &= \Biggl\lvert \sum_{i \in \partial^{\rm i} \cd{l_k}(x_0)} G_{\cd{l_k}(x_0)} (E;x_0,i) \sum_{\genfrac{}{}{0pt}{}{y \in (\cd{l_k}(x_0))^{\rm c} :}{\lVert i-y \rVert_1 = 1}} \psi (y) \Biggr\rvert\\
&\leq \sum_{i \in \partial^{\rm i} \cd{l_k}(x_0)} \euler^{-ml_k} 2d C(2+l_k+\lVert x_0 \rVert)^t \nonumber.
\end{align}
Since $\psi (x_0) \not = 0$, it follows that there exists $k_2 = k_2 (\omega , E, x_0) \in \NN$ such that $\cd{l_k}(x_0)$ is $(m,E)$-singular for all $k \geq k_2$. Let $k_3 = k_3 (\omega,E,x_0) = \max\{k_1,k_2\}$. If $k \geq k_3$ we conclude that $\cd{l_k}(x)$ is $(m,E)$-regular for all $x \in A_{k+1} (x_0)$.
\par
Let $\rho \in (0,1)$ be given. We pick $b > (1 + \rho) /(1-\rho)$ and define
\[
 A_{k+1}' (x_0) = \cd{b(a l_{k+1} + c)/(1+\rho)}(x_0) \setminus \cd{(al_{k} + c)/(1-\rho)}(x_0) .
\]
Then we have
\begin{enumerate}[(i)]
 \item $A_{k+1}' (x_0) \subset A_{k+1} (x_0)$ for $k \in \NN_0$,
 \item if $x \in A_{k+1}' (x_0)$ then $\dist (x,\partial^{\rm o} A_{k+1} (x_0)) \geq \rho \lVert x-x_0 \rVert_\infty$, and
 \item if $x \not \in \cd{(al_0 + c)/(\rho - 1)}(x_0)$ then $x \in A_{k+1}'$ for some $k \in \NN_0$.
\end{enumerate}
Here $\dist(m,A) = \inf_{k \in A} \lVert m-k \rVert_{\infty}$ for $k \in \ZZ^d$ and $A \subset \ZZ^d$.
Claim (i) and (iii) are obvious. To see (ii) we estimate the distance of $x\in A_{k+1}' (x_0)$ to both boundaries of the annulus $A_{k+1} (x_0)$. For the ``inner'' boundary we use $\lVert x-x_0 \rVert_\infty \leq \lfloor al_k + c \rfloor + \dist (x,\partial^{\rm i}\cd{al_k + c}(x_0))$ and $\lVert x-x_0 \rVert_\infty \geq \lfloor al_k + c \rfloor/(1-\rho)$ to conclude 
\[
\dist (x ,\partial^{\rm i} \cd{al_k + c}(x_0)) \geq \lVert x-x_0 \rVert_\infty - (1-\rho) \lVert x-x_0 \rVert_\infty .
\]
For the ``outer'' boundary we use the triangle inequality $$\dist (x_0 , \partial^{\rm o} \cd{b(al_{k+1}+c)}(x_0)) \leq \lVert x-x_0 \rVert_\infty + \dist (x,\partial^{\rm o} \cd{b(al_{k+1}+c)}(x_0)),$$ $\lVert x-x_0 \rVert_\infty \leq b(al_{k+1}+c)/(1+\rho)$ and $\dist(x_0, \partial^{\rm o} \cd{b(al_{k+1}+c)}(x_0)) = b(al_{k+1}+c)$ to conclude
\begin{align*}
 \dist (x,\partial^{\rm o} \cd{b(al_{k+1}+c)}(x_0)) &\geq \dist (x_0 , \partial^{\rm o} \cd{b(al_{k+1}+c)}(x_0)) -  \lVert x-x_0 \rVert_\infty \\
 &\geq \rho \lVert x-x_0 \rVert_\infty .
\end{align*}
Hence the claim (ii) follows.
\par
Now let $k \geq k_3$, so that $\cd{l_k}(y)$ is $(m,E)$-regular for any $y \in A_{k+1} (x_0)$. Let $x \in A_{k+1}' (x_0) \subset A_{k+1} (x_0)$. Again by Eq.~\eqref{eq:boundary},
\[
 \lvert \psi (x) \rvert \leq (2l_k + 1)^d \euler^{-ml_k} 2d \lvert \psi (u_1) \rvert
\]
for some $u_1 \in \partial^{\rm o} \cd{l_k + 1}(x)$. If $u_1 \in A_{k+1} (x_0)$ we obtain
\[
  \lvert \psi (x) \rvert \leq \bigl[(2l_k + 1)^d \euler^{-ml_k} 2d\bigr]^2 \lvert \psi (u_2) \rvert
\]
for some $u_2 \in \partial^{\rm o} \cd{l_k}(u_1)$. By claim (ii) we can repeat this procedure at least $\lfloor\rho \lVert x-x_0 \rVert_\infty /(l_k +1)\rfloor$ times, use the polynomial bound on $\psi$ and obtain for all $k \geq k_3$ and all $x \in A_{k+1}' (x_0)$ the inequality
\[
  \lvert \psi (x) \rvert \leq \bigl[(2l_k + 1)^d \euler^{-ml_k} 2d\bigr]^{\left\lfloor\rho \lVert x-x_0 \rVert_\infty /( l_k  +1)\right\rfloor} C \bigl(1+\lVert x_0 \rVert_\infty + b(al_{k+1} + c) \bigr)^t .
\]
We can rewrite the above inequality  as
\begin{multline*}
  \lvert \psi (x) \rvert \leq 
\exp\Biggl\{ -  \left\lfloor \frac{\rho \lVert x-x_0 \rVert_\infty}{ l_k + 1} \right\rfloor \rho m l_k \Biggl\} 
\exp\Biggl\{  \left\lfloor \frac{\rho \lVert x-x_0 \rVert_\infty}{l_k  + 1} \right\rfloor \bigg[d \ln (2l_k + 1)\Biggr. \\ + \ln (2d)  - (1-\rho)m l_k \bigg]   \Biggl.+ t \ln \left(C(1+\lVert x_0 \rVert_\infty + b(al_{k+1} + c) )\right) \Biggr\} .
\end{multline*}
Since $(al_k + c)/(1-\rho)\leq\lVert x-x_0 \rVert_\infty \leq b(a l_k^\kappa + c)/(1+\rho)$, the second exponential function gets smaller than one if $k$ is larger than some suitable $k_4$. Let $\rho' \in (0,1)$ and choose $\rho$ such that $\rho > 1/(1+a-\rho' a)$. We obtain that the first exponential function is bounded from above by
\begin{align*}
&\phantom{\leq} \exp\Biggl\{ -  \left( \frac{\rho \lVert x-x_0 \rVert_\infty}{l_k + 1} - 1 \right ) \rho m l_k \Biggl\}\\ 
&\leq \exp \Biggl \{ \rho m l_k \Biggr\} \exp \Biggl \{- \rho^2 m\lVert x-x_0 \rVert_\infty \frac{l_k}{l_k + 1} \Biggr\} \\
& \leq \exp \Biggl \{ \rho m l_k \biggl[ 1 - (1-\rho') \rho  \frac{\lVert x-x_0 \rVert_\infty}{l_k + 1} \biggr] \Biggr\} \exp \Biggl \{- \rho^2 \rho' m\lVert x-x_0 \rVert_\infty \frac{l_k}{l_k + 1} \Biggr\} .
\end{align*}
Again, using the lower bound on $\lVert x-x_0 \rVert_\infty$ and the relation between $\rho$ and $\rho'$, we see that the first exponential function gets smaller than one if $k \geq k_5$ with appropriate $k_5$. Hence, if we pick $\rho'' \in (0,1)$ we find $k_6 \in \NN$ such that for all $k \geq k_6$ and all $x \in A_{k+1}' (x_0)$ we have
\begin{equation} \label{eq:ende}
 \lvert \psi (x) \rvert \leq \exp \Bigl \{- \rho^2 \rho' m\lVert x-x_0 \rVert_\infty \rho'' \Bigr\} .
\end{equation}
Set $k_7 = \max\{k_1 , \ldots , k_6\}$. By claim (iii) we conclude that for all $x \in \ZZ^d \setminus \cd{(al_{k_7} + c)/(1-\rho)}(x_0)$ we have Ineq.~\eqref{eq:ende}.
\par
We have shown for all $\omega \in \tilde\Omega_0$ that every generalized
eigenvalue is an eigenvalue with an exponentially decaying eigenfunction. To end the proof we use the fact that for any $\omega \in \tilde\Omega$, 
almost every energy $E$ (with respect to a spectral measure) is a generalized eigenvalue \cite{Berezanskii-68,Simon-82}, see also \cite[Proposition~7.4]{Kirsch-08}. 
\par
Fix $\omega \in \tilde\Omega_0$, let $M_0 \subset I$ be the set of all generalized eigenvalues in $I$ and $M_1$ be the set of all eigenvalues in $I$. It follows that $I \setminus M_0$ has $\rho_\omega$-measure zero, and since $M_0 \subset M_1$ we conclude that $I \setminus M_1$ has $\rho_\omega$-measure zero. 
Since $\ell^2 (\ZZ^d)$ is separable $M_1$ is a countable set, and therefore the measure $\rho_\omega$ restricted to $I$ is a pure point measure. Hence, $\sigma_{\rm c} (A_\omega) \cap I = \emptyset$.
\end{proof}
\begin{proof}[Proof of Theorem~\ref{thm:loc_under_ini}]
Note that each cube that is uniformly $(m,E)$-regular for $\omega$ is also $(m,E)$-regular for $\omega$. Hence, as a corollary of Theorem~\ref{thm:msaoutput} and \ref{thm:vDK-2.3} we obtain exponential localization for the discrete alloy-type model $h_\omega$ in any energy region where the initial length scale estimate holds. This proves Theorem~\ref{thm:loc_under_ini}.
\end{proof}
\subsection{Initial scale estimate; proof of Theorem~\ref{thm:loc:large} and \ref{thm:loc:weak}} \label{sec:ini}
In this subsection we prove the initial scale estimate in certain disorder/energy regimes, formulated precisely in Lemma~\ref{lemma:ini_large} and Proposition~\ref{pro:initial_small}. Together with Theorem~\ref{thm:loc_under_ini} we obtain localization as stated in Theorem~\ref{thm:loc:large} and \ref{thm:loc:weak}.
The proofs of Theorem~\ref{thm:loc:large} and \ref{thm:loc:weak} are given at the end of this subsection. 
\par
In the large disorder regime the initial scale estimate can be deduced from the uniform control of resonances, see e.g.\ Theorem~11.1 in \cite{Kirsch-08} or Lemma~14 in \cite{Veselic-10b}. Since we provide uniform control of resonances for our model in Proposition~\ref{prop:unif_wegner_type}, we obtain the following lemma by following \cite{Kirsch-08,Veselic-10b}.
\begin{lemma}[Initial scale estimate, large disorder] \label{lemma:ini_large}
Let Assumptions~\ref{ass:exp} and \ref{ass:bv} be satisfied and $\lVert \rho \rVert_{\rm Var}$ sufficiently small. Then the initial scale estimate is satisfied in $\RR$.
\end{lemma}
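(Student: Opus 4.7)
The plan is to combine a uniform Combes--Thomas estimate with the resonance bound from Proposition~\ref{prop:unif_wegner_type}, and then exploit the linearity of the resulting estimate in $\lVert \rho \rVert_{\rm Var}$. First I would fix admissible parameters for Definition~\ref{ass:ini}: choose $\xi > 2d$, $\kappa \in (1, 2\xi/(\xi + 2d))$, $\beta \in (2-\kappa, 1)$, $q \in (0,1)$, an $m_0 > 0$, and then a length $l_0$ large enough that $l_0 \geq \max\{l^*, \overline l(\beta, \kappa, q, m_0), l_0(u)\}$ and $m_0 > l_0^{\beta - 1}$, where $l_0(u)$ is the threshold from Proposition~\ref{prop:unif_wegner_type}. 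Crucially, these are determined before any smallness of $\lVert \rho \rVert_{\rm Var}$ is required.

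Next I would invoke a uniform discrete Combes--Thomas bound. Since $\supp \mu$ is bounded, we have $\lVert v_{\omega'} \rVert_\infty \leq \omega_+ \lVert u \rVert_{\ell^1(\ZZ^d)}$ uniformly in $\omega' \in \Omega$, so the Combes--Thomas constants can be chosen uniform in the external randomness. Consequently, there is an $\eta = \eta(m_0, l_0) > 0$ such that
\[
\tilde d\bigl(E, \sigma(h^{\cd{l_0}(u)}_\omega)\bigr) \geq \eta \quad \Longrightarrow \quad \cd{l_0}(u) \text{ is uniformly } (m_0, E)\text{-regular for } \omega .
\]
Taking contrapositives and using the triangle inequality for $\tilde d$ (inherited from the pointwise triangle inequality by taking infima over external configurations), the event that both $\cd{l_0}(z_1)$ and $\cd{l_0}(z_2)$ fail to be uniformly $(m_0, E)$-regular for some common $E \in \RR$ is contained in $\{\tilde d(\sigma(h^{\cd{l_0}(z_1)}_\omega), \sigma(h^{\cd{l_0}(z_2)}_\omega)) < 2\eta\}$. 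Hence $B_{l_0}(z_1, z_2, m_0, \RR) \subset A(\cd{l_0}(z_1), \cd{l_0}(z_2), 2\eta)$ whenever $\lVert z_1 - z_2 \rVert_\infty > 8 l_0$, and Proposition~\ref{prop:unif_wegner_type} yields
\[
\PP\bigl(B_{l_0}(z_1, z_2, m_0, \RR)\bigr) \leq C_1 (2l_0 + 1)^{3d + \lVert I_0 \rVert_1}\bigl[2\eta + C_2 \euler^{-3 l_0 \alpha / 2}\bigr].
\]

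Tracing the constant $C_1$ through the proof of Proposition~\ref{prop:unif_wegner_type} back to Theorem~\ref{theorem:wegner_d}, one sees that $C_1$ depends linearly on $\lVert \rho \rVert_{\rm Var}$. With $l_0$ and $\eta$ already fixed, the right-hand side above is a fixed constant multiple of $\lVert \rho \rVert_{\rm Var}$ plus a fixed exponentially small residual; shrinking $\lVert \rho \rVert_{\rm Var}$ (and, if necessary, enlarging $l_0$ at the outset to absorb the residual $C_2 \euler^{-3 l_0 \alpha/2}$) makes it at most $l_0^{-2\xi}$. This gives $G(\RR, l_0, m_0, \xi)$, which is the initial scale estimate on all of $\RR$. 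The main technical point is the uniform version of Combes--Thomas: the standard proof must be adapted so that the decay rate and prefactor depend only on the ambient operator parameters and not on the particular boundary configuration entering the definition of $\tilde d$. This is precisely what the uniform $L^\infty$-bound on $v_{\omega'}$ provides.
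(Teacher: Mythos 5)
Your proposal is correct in substance and follows essentially the same route as the paper: the event $B_{l_0}(z_1,z_2,m_0,\RR)$ is absorbed into the resonance event $A(\cd{l_0}(z_1),\cd{l_0}(z_2),\epsilon)$, Proposition~\ref{prop:unif_wegner_type} is applied to it, and the factorization $C_1=\lVert\rho\rVert_{\rm Var}\hat C_1$ is used to push the bound below $l_0^{-2\xi}$. The one real difference is how failure of uniform regularity is linked to spectral distance: the paper uses the crude resolvent bound $\lvert G_\omega^{\cd{l_0}(z)}(E;z,w)\rvert\le d\bigl(E,\sigma(h_\omega^{\cd{l_0}(z)})\bigr)^{-1}$, so non-regularity only forces $\tilde d(E,\sigma)<\euler^{m_0l_0}$ and the resonance event is taken with $\epsilon=2\euler^{m_0l_0}$, which then requires $\lVert\rho\rVert_{\rm Var}$ exponentially small in $l_0$; your uniform Combes--Thomas step replaces this by a threshold $2\eta$ not growing with the scale (and uniformity in the exterior configuration is indeed unproblematic, since the Combes--Thomas constants for the discrete model are controlled by the hopping term, respectively by the uniform bound $\lVert v_{\omega'}\rVert_\infty\le\omega_+\lVert u\rVert_{\ell^1(\ZZ^d)}$), so your smallness condition on $\lVert\rho\rVert_{\rm Var}$ is only polynomial in $l_0$. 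Both variants prove the lemma as stated.

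One point needs more care than you give it: you claim that $l_0$ is ``determined before any smallness of $\lVert\rho\rVert_{\rm Var}$ is required'', but the threshold $l^*$ in Definition~\ref{ass:ini} comes from Theorem~\ref{thm:induction} and depends on $\rho$, so choosing $l_0\ge l^*$ and afterwards replacing $\rho$ by a density of smaller total variation is a priori circular. The paper breaks this circle by observing that $l^*$ does not increase when $\lVert\rho\rVert_{\rm Var}$ decreases (visible in the induction-step probability bound, which is proportional to $C_1\propto\lVert\rho\rVert_{\rm Var}$): one fixes $\lVert\rho\rVert_{\rm Var}=1$, chooses $l_0\ge\max\{l^*,\overline l\}$ with $m_0>l_0^{\beta-1}$, and only then shrinks $\lVert\rho\rVert_{\rm Var}$. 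Your argument needs this (or an equivalent) observation to make the order of choices consistent; with it included, the proof goes through.
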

\begin{proof} Let $l_0 > 0$, $x,y \in \ZZ^d$ with $\lVert x-y \rVert > 8l_0$, $E \in \RR$ and $m_0 > 0$. Then
 \begin{align*}
  \PP \bigl(B_{l_0} (x,y,m_0,E) \bigr) 
& \leq \PP \left( \exists E \colon \tilde d \Bigl( E , \sigma \Bigl(h_\omega^{\cd{l_0} (z)}\Bigr) \Bigr) < \euler^{m_0 l_0}, \ z \in \{x,y\} \right) \\[1ex]
& \leq \PP \left(\tilde d \Bigl(\sigma \Bigl(h_\omega^{\cd{l_0} (x)}\Bigr) , \sigma \Bigl(h_\omega^{\cd{l_0} (y)} \Bigr) \Bigr) < 2 \euler^{m_0 l_0}  \right) .
 \end{align*}
We use Proposition~\ref{prop:unif_wegner_type} to estimate this probability and obtain
\begin{equation} \label{eq:ini}
 \PP \bigl(B_{l_0} (x,y,m,E) \bigr) \leq C_1 (2 l_0 + 1)^{3d + \lVert I_0 \rVert_1} \bigl[ 2 \euler^{m l_0} + C_2 \euler^{-3 l_0 \alpha / 2} \bigr] .
\end{equation}
From the proof of Proposition~\ref{prop:unif_wegner_type} we infer that $C_1 = \lVert \rho \rVert_{\rm Var} \hat C_1$ with some constant $\hat C_1$ depending only on the single-site potential $u$. We fix $q \in (0,1)$, $\xi > 2d$, $\kappa \in (1, 2\xi / (\xi + 2d))$ and $\beta \in (2 - \kappa , 1)$. Finally, we choose $\lVert \rho \rVert_{\rm Var}$ (small enough) and $l_0>1$ (large enough) in such a way that 
\[
 \PP (B_{l_0} (x,y , m_0,E)) \leq l_0^{-2 \xi}, \quad l_0 \geq \max\{l^* , \overline l \} \quad \text{and} \quad l_0^{\beta - 1} < m_0 .
\]
Here $l^* = l^* (d,\xi,\kappa,\beta,u,\rho)$ is given by Theorem~\ref{thm:induction} and $\overline l = \overline l (\beta , \kappa , q , m_0)$ is as in Eq.~\eqref{eq:lbar}. This choice of $\lVert \rho \rVert_{\rm Var}$ and $l_0$ is always possible as we explain now. Recall that $l^*$ depends on $\lVert \rho \rVert_{\rm Var}$ and note that $l^*$ decreases as $\lVert \rho \rVert_{\rm Var}$ decreases, see Eq.~\eqref{eq:l3*}. Now fix for the moment $\rho$ with $\lVert \rho \rVert_{\rm Var} = 1$ and choose $l_0$ large enough such that $l_0 \geq \max\{l^* , \overline l \}$ and $l_0^{\beta - 1} < m_0$. If we choose $  \rho$ with $\lVert \rho \rVert_{\rm Var} < 1$ these two conditions will still be satisfied since $l^*$ decreases if $\lVert \rho \rVert_{\rm Var}$ decreases. In a last step we choose $\lVert \rho \rVert_{\rm Var} \leq 1$ small enough such that $\PP (B_{l_0} (x,y , m_0,E)) \leq l_0^{-2 \xi}$, which is by Ineq.~\eqref{eq:ini} satisfied if
\[
 \lVert \rho \rVert_{\rm Var}  \leq \frac{l_0^{-2\xi}}{\hat C_1 (2 l_0 + 1)^{3d + \lVert I_0 \rVert_1} \bigl[ 2 \euler^{m l_0} + C_2 \euler^{-3 l_0 \alpha / 2} \bigr]} . \qedhere
\]
\end{proof}
In the case of weak disorder, i.e.\ arbitrary $\lambda > 0$ and energies near the band edges, far less is known if the single-site-potential may change its sign. A version of an initial scale estimate has been proven in \cite{CaoE-12} for exponential decaying sign-changing single-site potentials in the case $d = 3$. However, the initial scale estimate of \cite{CaoE-12} is not suitable to conclude localization via multiscale analysis in the long-range case, since they prove a non-uniform version only. With a non-uniform version the multiscale analysis requires independence at distance, i.e.\ $\supp u$ compact, see e.g.\ \cite[Theorem~2.4]{GerminetK-03}.
\par
In the following we derive a uniform initial scale estimate as formulated in Definition~\ref{ass:ini} for non-compactly supported single-site potentials with a small negative part. The proof is in the manner of \cite{Simon-85} where the Anderson model was considered and \cite{Veselic-01,Veselic-02a} where an initial scale estimate is shown in the case of compactly supported single-site potentials with a small negative part.
\par
For the proof we need to introduce the Neumann Laplacian. The Neumann Laplacian on $\cd{} \subset \ZZ^d$ is the operator on $\ell^2 (\cd{})$ defined by
\[
 h_0^{\cd{} , \rm N} = \pi_{\cd{}}h_0 \iota_{\cd{}} - 2d + n_{\cd{}}
\]
where $n_{\cd{}} : \ell^2 (\cd{}) \to \ell^2 (\cd{})$ is diagonal with
\[
 n_{\cd{}} (i) = \lvert \{ j \in \cd{} \colon \lVert j-i \rVert_1 = 1 \} \rvert .
\]
We denote by $h_\omega^{\cd{} , \rm N} = h_0^{\cd{} , \rm N} + \pi_{\cd{}} v_{\omega} \iota_{\cd{}}$ the corresponding Neumann Hamiltonian.
\begin{proposition} \label{prop:first} Let Assumption~\ref{ass:exp} be satisfied and $\overline{u}\colonequals \sum_{k \in \ZZ^d} u(k) > 0$. There exists $1<\beta_0$ and $l_8^* < \infty$ depending only on $u$, such that if we pick $l \geq l_8^*$ and $\beta \geq \beta_0$, and assume that Assumption~\ref{ass:small_neg} is satisfied for $\delta = l^{-2} / (8 \beta \omega_+)$, then we have the implication
\[
 \lambda_1 (h_\omega^{\cd{l} , \rm N}) < l^{-2} / \beta \quad \Rightarrow \quad
 \biggl\lvert \biggl\{k \in \cd{l} \colon \omega_k < \frac{4l^{-2}}{\beta \overline{u}} \biggr\} \biggr\rvert > \frac{13}{12} \frac{\lvert \cd{l} \rvert}{2} .
\]
\end{proposition}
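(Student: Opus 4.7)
My plan is to prove the contrapositive. Assume the set $L = \{k \in \cd{l} : \omega_k \geq c_0\}$ with $c_0 = 4l^{-2}/(\beta \bar u)$ satisfies $|L| \geq (11/24)\lvert\cd{l}\rvert$; I will show $\lambda_1(h_\omega^{\cd{l}, \rm N}) \geq l^{-2}/\beta$. Assumption~\ref{ass:small_neg} gives the decomposition $v_\omega = v_\omega^+ - \delta v_\omega^-$ with $v_\omega^\pm \geq 0$; the bounds $\lVert u_-\rVert_1 \leq 1$ and $\omega_k \leq \omega_+$ force $\lVert v_\omega^-\rVert_\infty \leq \omega_+$, so with $\delta = l^{-2}/(8\beta\omega_+)$ one obtains the operator inequality $h_\omega^{\cd{l}, \rm N} \geq h_0^{\cd{l}, \rm N} + v_\omega^+ - l^{-2}/(8\beta)$. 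It therefore suffices to show $\lambda_1(h_0^{\cd{l}, \rm N} + v_\omega^+) \geq 9 l^{-2}/(8\beta)$, reducing the problem to a Schr\"odinger operator with a non-negative potential.

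Suppose for contradiction that this reduced bound fails, and let $\phi$ be the normalized ground state of $h_0^{\cd{l}, \rm N} + v_\omega^+$. Since the corresponding quadratic form splits into the non-negative Dirichlet form and the non-negative term $\sum_x v_\omega^+(x)\phi(x)^2$, each is bounded by $9l^{-2}/(8\beta)$. Combining the Dirichlet bound with the discrete Neumann--Poincar\'e inequality on $\cd{l}$ (whose spectral gap is of order $l^{-2}$) yields $\lVert \phi - \bar\phi \, \chi_{\cd{l}}\rVert_2^2 \leq 9/(8c_d\beta) =: \eta$ for a dimensional constant $c_d > 0$, so for $\beta \geq \beta_0$ large enough, $\phi$ is close in $L^2$ to the constant $\bar\phi$ with $\bar\phi^2\lvert\cd{l}\rvert = 1-O(\eta)$.

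The potential bound, rewritten as $\sum_k \omega_k f(k) < 9l^{-2}/(8\beta)$ with $f(k) = \sum_{x \in \cd{l}} u_+(x-k)\phi(x)^2 \geq 0$ and $\sum_{k \in \ZZ^d} f(k) = \lVert u_+\rVert_1 \geq \bar u$, combined with $\omega_k \geq c_0$ on $L$, yields $\sum_{k \in L}f(k) \leq (9/32)\bar u$ and hence $\sum_{k \notin L}f(k) \geq (23/32)\bar u$. The off-cube contribution $\sum_{k \notin \cd{l}}f(k)$ is made $o(1)$ for $l \geq l_8^*$ via Lemma~\ref{lemma:eig_perturb} applied to $u_+$ together with the $L^2$-closeness of $\phi$ on a boundary layer of width $O(\log l)$, so $\sum_{k \in S} f(k) \geq (23/32)\bar u - o(1)$ where $S = \cd{l}\setminus L$.

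To finish, one must convert this $\ell^1$-sum bound into the set-size estimate $\lvert S\rvert > (13/24)\lvert\cd{l}\rvert$ that contradicts $|L| \geq (11/24)\lvert\cd{l}\rvert$. This uses an $\ell^\infty$-type bound $f(k) \leq \lVert u_+\rVert_\infty \lVert \phi\rVert_\infty^2$, a pointwise estimate on $\lVert \phi\rVert_\infty$ inherited from $\lVert \phi - \bar\phi\chi_{\cd{l}}\rVert_2 \leq \sqrt\eta$, and the Cauchy--Schwarz inequality $\sqrt{\sum_A\phi^2} \leq \sqrt{|A|\bar\phi^2} + \sqrt\eta$ applied to $A = S$; the numerical fractions $11/24$ and $13/24$ and the choices of $\beta_0, l_8^*$ are calibrated so that all lower-order error terms can be absorbed. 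The main obstacle lies precisely in this final conversion: the $L^2$-proximity of $\phi$ to a constant does not directly give a useful pointwise bound, and careful handling of boundary effects through Lemma~\ref{lemma:eig_perturb} is essential for the estimate to propagate with the correct $l^{-2}/\beta$ scaling.
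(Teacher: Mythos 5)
Your overall strategy (contrapositive, split off the negative part of the potential using Assumption (C), reduce to a non-negative potential, and then analyse the ground state) diverges from the paper, which never touches the true ground state: it truncates the coupling constants to $\tilde\omega_k=\min(\omega_k,8l^{-2}/(\beta\lVert u\rVert_1))$ and applies Temple's inequality to $h^{\cd{l},\mathrm N}_{\tilde\omega}$ with the \emph{constant} trial vector $\psi$, so that the lower bound on $\lambda_1$ comes out directly as $\tfrac34\tfrac{\overline u}{\lvert\cd{l}\rvert}\sum_j\tilde\omega_j$ minus controlled errors, and the set-size statement follows by plugging in $\tilde\omega_j\ge 4l^{-2}/(\beta\overline u)$ on the complement of the small set. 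Your opening reduction $h_\omega^{\cd{l},\mathrm N}\ge h_0^{\cd{l},\mathrm N}+v_\omega^+-l^{-2}/(8\beta)$ is fine and plays the same role as the paper's truncation step, and the Poincar\'e/spectral-gap observation is correct.

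The gap you flag at the end is, however, fatal to your route for $d\ge 2$, not just a technicality. To convert $\sum_{k\in S}f(k)\gtrsim\overline u$ into $\lvert S\rvert\gtrsim\lvert\cd{l}\rvert$ you need $\max_k f(k)=O(\lvert\cd{l}\rvert^{-1})$, i.e.\ essentially $\lVert\phi\rVert_\infty^2=O(\lvert\cd{l}\rvert^{-1})$. But all you control is $\lVert\phi-\bar\phi\chi\rVert_2^2\le 9/(32\beta)$, a constant independent of $l$, and even the strongest pointwise information extractable from the Dirichlet-form bound (oscillation $\le\sqrt{2dl\cdot 9l^{-2}/(8\beta)}$ along paths) only gives $\lVert\phi\rVert_\infty^2=O(l^{-1})$, whereas $\lvert\cd{l}\rvert^{-1}\sim l^{-d}$. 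So the counting argument loses a factor $l^{d-1}$ and yields only $\lvert S\rvert\gtrsim l$, which does not contradict $\lvert L\rvert\ge\tfrac{11}{24}\lvert\cd{l}\rvert$ in dimension $d\ge2$. This is precisely what Temple's inequality buys the paper: the deviation of the true ground state from the constant enters only through the scalar correction $\langle\psi,h^2\psi\rangle/(\xi-\langle\psi,h\psi\rangle)$, and the truncation of the $\omega_k$ is what keeps $\langle\psi,h^2\psi\rangle$ of size $l^{-2}/\beta$ times $\langle\psi,h\psi\rangle$-type quantities. Two further slips: (i) your bound $\sum_{k\notin L}f(k)\ge\tfrac{23}{32}\overline u$ discards $\lVert u_+\rVert_1-\overline u\ge 0$; with only $\tfrac{23}{32}\overline u$ against a per-site ceiling $\sim\lVert u_+\rVert_1/\lvert\cd{l}\rvert$ the fractions $11/24$, $13/24$ do not close unless $\lVert u_+\rVert_1$ is close to $\overline u$, so you must keep $\lVert u_+\rVert_1$ in the lower bound; (ii) Lemma~5.2 applies to $u$, not to $u_+$, which need not decay exponentially under Assumption (C) — one must first write $u_+=u+\delta u_-$ and bound the $\delta u_-$ tail separately.
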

\begin{proof}
Recall that the eigenvector corresponding to the lowest eigenvalue $\lambda_1 (h_0^{\cd{l} , \rm N}) = 0$ is $\psi \in \ell^2 (\cd{l})$ with $\psi (k) = 1 / \sqrt{\lvert \cd{l} \rvert}$ for all $k \in \cd{l}$. Moreover, the second eigenvalue satisfies the estimate
\begin{equation} \label{eq:gap}
 \lambda_2 (h_0^{\cd{l} , \rm N}) - \lambda_1 (h_0^{\cd{l} , \rm N}) = \lambda_2 (h_0^{\cd{l} , \rm N}) = 2-2\cos(\pi / l) > 4 l^{-2} ,
\end{equation}
see e.g.\ \cite{Simon-85}.
 For $\omega \in \Omega$ we set $\tilde \omega_k =                 \min (\omega_k , 8l^{-2} / (\beta \lVert u \rVert_1))$.
Then we have for all $\omega \in \Omega$ and $x \in \cd{l}$
\begin{align*}
 v_{\tilde\omega} (x) - v_\omega (x) &= \sum_{k \in \ZZ^d} \bigl(\omega_k - \tilde \omega_k \bigr) \bigl( \delta u_- (x-k) - u_+ (x-k) \bigr) 
 \leq \delta \omega_+ = \frac{l^{-2}}{8\beta} .
\end{align*}
This gives $\lambda_1 (h_\omega^{\cd{l},\rm N}) \geq \lambda_1 (h_{\tilde\omega}^{\cd{l},\rm N}) - l^{-2} /(8\beta)$. We want to apply Temple's inequality, 
see e.g.\ \cite{ReedS-78d}, to the operator $h_{\tilde\omega}^{\cd{l},\rm N}$ 
with the vector $\psi$ and some constant $\xi$ with $\langle \psi , h_{\tilde\omega}^{\cd{l},\rm N} \psi \rangle < \xi \leq \lambda_2 (h_{\tilde\omega}^{\cd{l},\rm N})$, in order to estimate $\lambda_1 (h_{\tilde\omega}^{\cd{l},\rm N})$ from below. We set 
\[
 \xi \colonequals \lambda_2 (h_0^{\cd{l} , \rm N}) - \frac{l^{-2}}{8 \beta} \quad \text{and} \quad \beta_0 \colonequals 65 / 32 + 8 \lVert u \rVert_1 / \overline{u} .
\]
That $\xi$ is larger than $\langle \psi , h_{\tilde\omega}^{\cd{l},\rm N} \psi \rangle$ follows from Ineq.~\eqref{eq:gap}, $\beta > 65 / 32$, and the upper bound
\begin{equation} \label{eq:upper1}
 \bigl\langle \psi , h_{\tilde \omega}^{\cd{l} , \rm N} \psi \bigr\rangle = \bigl\langle \psi , \pi_{\cd{l}} v_{\tilde\omega} \iota_{\cd{l}} \psi \bigl\rangle
\leq \frac{1}{\lvert \cd{l} \rvert} \sum_{k \in \cd{l}} \sum_{i \in \ZZ^d} \tilde\omega_i \lvert u(k-i) \rvert  \leq \frac{8 l^{-2}}{\beta} .
\end{equation}
That $\xi$ is smaller or equal to $\lambda_2 (h_{\tilde\omega}^{\cd{l} , \rm N})$ follows from the lower bound
\[
 v_{\tilde \omega} (x) \geq - \sum_{k \in \ZZ^d} \tilde\omega_k \delta u_- (x-k) \geq - \frac{l^{-2}}{8 \beta} \quad \Rightarrow \quad \lambda_2 (h_{\tilde\omega}^{\cd{l} , \rm N}) \geq \lambda_2 (h_0^{\cd{l} , \rm N}) - \frac{l^{-2}}{8\beta} .
\]
Here we have used that $\delta = l^{-2} / (8 \beta \omega_+)$, $\tilde \omega_k \leq \omega_+$ and $\lVert u_- \rVert_1 \leq 1$.
By Ineq.~\eqref{eq:gap}, Ineq.~\eqref{eq:upper1} and the choice of $\beta_0$ we have
\begin{equation} \label{eq:lower2}
\xi -  \bigl\langle \psi , h_{\tilde \omega}^{\cd{l} , \rm N} \psi \bigr\rangle > \frac{l^{-2}}{\beta} \Bigl[ 4 \beta - \frac{1}{8} - 8 \Bigr] \geq \frac{l^{-2}}{\beta} \frac{32 \lVert u \rVert_1}{\overline{u}} .
\end{equation}
For the expectation of the square of $h_{\tilde \omega}^{\cd{l} , \rm N}$ we calculate that
\begin{align*}
 \Bigl\langle \psi , \bigl(h_{\tilde \omega}^{\cd{l} , \rm N} \bigr)^2 \psi \Bigr\rangle \leq \frac{1}{\lvert \cd{l} \rvert} \frac{8l^{-2}}{\beta} \sum_{k \in \cd{l}} \sum_{j \in \ZZ^d} \tilde\omega_j \lvert u(k-j) \rvert .
\end{align*}
Choose $R = R(u) \in \NN$ such that 
\[
  \hat C \euler^{-\alpha R/2} \left[\frac{16}{\lVert u \rVert_1} + \frac{\overline{u}}{4\lVert u \rVert_1^2}  \right] \leq \frac{1}{8},
\]
where $\hat C = \hat C (C , \alpha , d)$ is the constant from Lemma~\ref{lemma:eig_perturb}. Later it will be convenient that $l \geq l_9^* = l_9^* (u) \colonequals 2R$.
We split the second sum in $j \in \cd{l+R}$ and $j \not \in \cd{l+R}$ and obtain by Lemma~\ref{lemma:eig_perturb} that
\begin{equation} \label{eq:upper2}
 \Bigl\langle \psi , \bigl(h_{\tilde \omega}^{\cd{l} , \rm N} \bigr)^2 \psi \Bigr\rangle \leq \frac{1}{\lvert \cd{l} \rvert} \frac{8l^{-2}}{\beta} \left(\lVert u \rVert_1 \sum_{j \in \cd{l+R}} \tilde\omega_j + \lvert \cd{l} \rvert \frac{8 l^{-2}}{\beta \lVert u \rVert_1} \hat C \euler^{-\alpha R /2} \right).
\end{equation}
Here $\hat C$ is again the constant from Lemma~\ref{lemma:eig_perturb}. We also need a lower bound for $\langle \psi , h_{\tilde \omega}^{\cd{l} , \rm N} \psi \rangle$. We have
\begin{align*}\label{eq:lower_bound}
  \bigl\langle \psi , h_{\tilde \omega}^{\cd{l} , \rm N} \psi \bigr\rangle 
 &= \frac{1}{\lvert \cd{l} \rvert} \sum_{k \in \cd{l}} \Biggl( \sum_{i \in \cd{l+R} }\tilde\omega_i u(k-i) + \sum_{i \not\in \cd{l+R}}\tilde\omega_i u(k-i) \Biggr) \\
 &= \frac{1}{\lvert \cd{l} \rvert}\Biggl(\overline u \!\!\! \sum_{i \in \cd{l+R}} \!\!\! \tilde \omega_i - \!\!\!\!\! \sum_{i \in \cd{l+R}} \sum_{k \not\in \cd{l}} \tilde\omega_i u(k-i) +  \sum_{k \in \cd{l}} \sum_{i \not\in \cd{l+R}} \tilde\omega_i u(k-i) \Biggr) .
\end{align*}
For the third summand we have by Lemma~\ref{lemma:eig_perturb}
\[
 \frac{1}{\lvert \cd{l} \rvert}\sum_{k \in \cd{l}} \sum_{i \not\in \cd{l+R}} \tilde\omega_i u(k-i) \geq   -\frac{1}{\lvert \cd{l} \rvert}\sum_{k \in \cd{l}} \sum_{i \not\in \cd{l+R}} \tilde\omega_i \lvert u(k-i) \rvert \geq -  \frac{8l^{-2}}{\lVert u \rVert_1 \beta} \hat C \euler^{-\alpha R / 2} .
\]
For the second sum we have using (a version with $u(k-x)$ instead of $u(x-k)$ of) Lemma~\ref{lemma:eig_perturb} that for $l \geq l_9^*$
\begin{align*}
  \sum_{i \in \cd{l+R}} \sum_{k \not\in \cd{l}} \tilde\omega_i u(k-i) &= 
 \sum_{i \in \cd{l-R}} \sum_{k \not\in \cd{l}} \tilde\omega_i u(k-i) + \sum_{i \in \cd{l+R} \setminus \cd{l-R}} \sum_{k \not\in \cd{l}} \tilde\omega_i u(k-i) \\
& \leq \lvert \cd{l-R} \rvert \frac{8l^{-2}}{\beta \lVert u \rVert_1} \hat C \euler^{-\alpha R / 2} + \frac{8 l^{-2}}{\beta \lVert u \rVert_1} \lvert \cd{l+R} \setminus \cd{l-R} \rvert \lVert u \rVert_1 \\
& \leq \lvert \cd{l-R} \rvert \frac{8l^{-2}}{\beta \lVert u \rVert_1} \hat C \euler^{-\alpha R / 2} + \frac{8 l^{-2}}{\beta} 4dR(l+R)^{d-1} .
\end{align*}
If $l$ is sufficiently large, i.e.\ $l \geq l_{10}^*$ for some $l_{10}^* = l_{10}^* (R,d) = l_{10}^* (u)$, we have
\[
 \frac{1}{\lvert \cd{l} \rvert}\sum_{i \in \cd{l+R}} \sum_{k \not\in \cd{l}} \tilde\omega_i u(k-i) \leq \frac{8l^{-2}}{\beta \lVert u \rVert_1} \hat C \euler^{-\alpha R / 2} + \frac{l^{-2}}{8 \beta} .
\]
Putting everything together we arrive at
\begin{equation} \label{eq:lower1}
 \bigl\langle \psi , h_{\tilde \omega}^{\cd{l} , \rm N} \psi \bigr\rangle  \geq
\frac{\overline u}{\lvert \cd{l} \rvert}  \sum_{i \in \cd{l+R}} \!\!\! \tilde \omega_i 
- 2 \frac{8l^{-2}}{\beta \lVert u \rVert_1}\hat C \euler^{-\alpha R / 2} - \frac{l^{-2}}{8 \beta} .
\end{equation}
We apply Temple's inequality and obtain by Ineq.~\eqref{eq:lower1}, Ineq.~\eqref{eq:upper2} and Ineq.~\eqref{eq:lower2} that 
\begin{align*}
 \lambda_1 (h_\omega^{\cd{l} , \rm N}) &\geq  \lambda_1 (h_{\tilde\omega}^{\cd{l} , \rm N}) - \frac{l^{-2}}{8 \beta} 
\geq \bigl\langle \psi , h_{\tilde \omega}^{\cd{l} , \rm N} \psi \bigr\rangle - \frac{\Bigl\langle \psi , \bigl(h_{\tilde \omega}^{\cd{l} , \rm N}\bigr)^2 \psi \Bigr\rangle}{\xi  - \bigl\langle \psi , h_{\tilde \omega}^{\cd{l} , \rm N} \psi \bigr\rangle } - \frac{l^{-2}}{8 \beta} \\
& \geq \frac{3}{4} \frac{\overline{u}}{\lvert \cd{l} \rvert} \sum_{j \in \cd{l+R}} \tilde\omega_j - \frac{l^{-2}}{4\beta} - \frac{l^{-2}}{\beta} \hat C \euler^{-\alpha R/2} \Bigl[\frac{16}{\lVert u \rVert_1} + \frac{\overline{u}}{4 \lVert u \rVert_1^2}  \Bigr] .
\end{align*}
Set $l_8^* = \max\{l_9^* , l_{10}^*\}$. By our choice of $R$ we finally obtain for $l \geq l_8^*$ that
\[
\lambda_1 (h_\omega^{\cd{l} , \rm N}) \geq \frac{3}{4} \frac{\overline{u}}{\lvert \cd{l} \rvert} \sum_{j \in \cd{l+R}} \tilde\omega_j - \frac{3}{8} \frac{l^{-2}}{\beta} .
\]
Assume that the statement of the proposition is wrong. Then
\[
\biggl\lvert \biggl\{k \in \cd{l} \colon \omega_k \geq \frac{4l^{-2}}{\beta \overline{u}} \biggr\} \biggr\rvert > \frac{11}{12} \frac{\lvert \cd{l}\rvert}{2} .
\]
Since $\omega_k \geq 4l^{-2} / (\beta \overline{u})$ implies $\tilde\omega_k \geq 4l^{-2} / (\beta \overline{u})$ we have
\[
\frac{l^{-2}}{\beta} > \lambda_1 (h_\omega^{\cd{l} , \rm N}) \geq \frac{3}{4} \frac{\overline{u}}{\lvert \cd{l} \rvert} \sum_{j \in \cd{l+R}} \tilde\omega_j - \frac{3}{8} \frac{l^{-2}}{\beta} \geq \frac{l^{-2}}{\beta} .
\]
This is a contradiction. 
\end{proof}
\begin{proposition} \label{prop:lifshitz}
Let Assumption~\ref{ass:exp} be satisfied, $\overline{u}\colonequals \sum_{k \in \ZZ^d} u(k) > 0$, $\zeta \in (0,2)$, $\xi > 0$, $\beta_0$ as in Proposition~\ref{prop:first} and assume that there is $\epsilon_0 > 0$ with
\[
\PP (\omega_0 < \epsilon_0) \leq \frac{1}{12}. 
\]
Then there exists and $l_{11}^* = l_{11}^* (u,\mu,\beta_0,\zeta,\xi) < \infty$, such that if we pick $l \geq l_{11}^*$ satisfying
\begin{equation} \label{eq:ungerade}
 \frac{\lfloor 2l+1 \rfloor}{\lfloor 2l^{1-\zeta / 2} \beta_0^{-1/2}+1 \rfloor} \in 2 \NN + 1 ,
\end{equation}
and assume that Assumption~\ref{ass:small_neg} is satisfied for $\delta = l^{\zeta - 2} / (8 \omega_+)$, then we have 
 \[
  \PP \bigl( \lambda_1 (h_\omega^l) < l^{-2+\zeta} \bigr) \leq l^{-\xi}  .
 \]
\end{proposition}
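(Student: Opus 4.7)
The plan is to combine a discrete Neumann bracketing estimate with a binomial large-deviation argument, using Proposition~\ref{prop:first} as a deterministic bridge from ``small Neumann ground state on a box'' to ``many small coupling constants in that box''. I would introduce the auxiliary scale $l' \colonequals l^{1-\zeta/2}\beta_0^{-1/2}$, chosen precisely so that $(l')^{-2}/\beta_0 = l^{\zeta-2}$ matches the energy threshold in the conclusion. The divisibility condition~\eqref{eq:ungerade} is there exactly so that $\cd{l}$ decomposes as a disjoint union $\bigsqcup_{i=1}^K \cd{l'}(j_i)$ of $K \le C\, l^{d\zeta/2}$ translates of $\cd{l'}$, with one of them centered at the origin.

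Writing the Dirichlet quadratic form of $h_0^l$ as a sum over nearest-neighbour bonds $\{x,y\}$ in $\cd{l}$ plus boundary bonds leaving $\cd{l}$, partitioning the internal bonds into those contained in a single tile and those connecting two tiles, and discarding the latter together with the boundary contribution (both being non-negative quadratic forms), yields the Neumann bracketing
\[
h_\omega^l \geq \bigoplus_{i=1}^K h_\omega^{\cd{l'}(j_i), \mathrm N}.
\]
Hence $\lambda_1(h_\omega^l) \geq \min_i \lambda_1(h_\omega^{\cd{l'}(j_i), \mathrm N})$, and by the union bound together with the translation invariance of $\PP = \otimes_k \mu$,
\[
\PP\bigl(\lambda_1(h_\omega^l) < l^{\zeta-2}\bigr) \leq K\,\PP\bigl(\lambda_1(h_\omega^{\cd{l'}, \mathrm N}) < (l')^{-2}/\beta_0\bigr).
\]
The hypothesis $\delta = l^{\zeta-2}/(8\omega_+)$ is exactly what Proposition~\ref{prop:first} requires when applied on scale $l'$ with $\beta=\beta_0$, so for $l$ large enough that $l' \geq l_8^*(u)$ the deterministic implication
\[
\lambda_1(h_\omega^{\cd{l'},\mathrm N}) < (l')^{-2}/\beta_0 \;\Longrightarrow\; \bigl\lvert\bigl\{k \in \cd{l'} \colon \omega_k < 4l^{\zeta-2}/\overline u\bigr\}\bigr\rvert > \tfrac{13}{24}\lvert\cd{l'}\rvert
\]
is in force.

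Finally, taking $l$ large enough that $4l^{\zeta-2}/\overline u < \epsilon_0$, each event $\{\omega_k < 4l^{\zeta-2}/\overline u\}$ has probability at most $\PP(\omega_0 < \epsilon_0) \leq 1/12$. Since the $\{\omega_k\}_{k\in\cd{l'}}$ are i.i.d.\ and $13/24 > 1/12$, a Chernoff bound for a binomial random variable gives
\[
\PP\bigl(\lambda_1(h_\omega^{\cd{l'},\mathrm N}) < (l')^{-2}/\beta_0\bigr) \leq \euler^{-c\lvert\cd{l'}\rvert}
\]
for some $c = c(\mu,\epsilon_0) > 0$, and combining the bounds yields
\[
\PP\bigl(\lambda_1(h_\omega^l) < l^{\zeta-2}\bigr) \leq C\, l^{d\zeta/2}\,\exp\bigl(-c'\,l^{d(1-\zeta/2)}\bigr).
\]
Since $\zeta<2$ the stretched-exponential in $l$ beats every inverse power, so the right-hand side drops below $l^{-\xi}$ as soon as $l \geq l_{11}^*(u,\mu,\beta_0,\zeta,\xi)$. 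I expect the only subtle step to be the verification of the discrete Neumann bracketing on the partition produced by~\eqref{eq:ungerade}; this is routine once the Dirichlet form is decomposed into bond energies, but requires careful bookkeeping at the tile interfaces, which is the reason for demanding an odd integer ratio in~\eqref{eq:ungerade}.
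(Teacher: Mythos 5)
Your proposal is correct and follows essentially the same route as the paper: the same auxiliary scale $\tilde l = l^{1-\zeta/2}\beta_0^{-1/2}$ (so that $\tilde l^{-2}/\beta_0 = l^{\zeta-2}$ and $\delta$ matches the hypothesis of Proposition~\ref{prop:first}), the same Neumann bracketing over the tiling guaranteed by \eqref{eq:ungerade}, the same deterministic reduction via Proposition~\ref{prop:first} to counting sites with $\omega_k < 4l^{\zeta-2}/\overline u < \epsilon_0$, and the same Bernstein/Chernoff estimate beating the polynomial number $n^d \sim l^{d\zeta/2}$ of tiles. The only cosmetic difference is that the paper delegates the bracketing step to the cited references while you sketch it via the bond decomposition of the quadratic form.
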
 
\begin{remark} \label{remark:unbounded}
 The set 
 \[
 \left \{l > 0 \colon \frac{\lfloor 2l+1 \rfloor}{\lfloor 2l^{1-\zeta / 2} \beta_0^{-1/2}+1 \rfloor} \in 2 \NN + 1 \right\}
 \]
is an unbounded set. This follows readily from the following
fact: If $(a_n)$ and $(b_n)$
are sequences of natural numbers satisfying $a_n \le a_{n+1} \le a_n + 1$
and $b_n \le b_{n+1} \le b_n + 1$ for all $n$ and $a_n / b_n \to \infty$,
then every natural number larger than $a_1 / b_1$ can be realized by a quotient $a_n
/ b_n$.
\end{remark}

\begin{proof}[Proof of Proposition~\ref{prop:lifshitz}]
We set $\tilde l \colonequals l^{1-\zeta / 2} \beta_0^{-1/2}$ and assume that $l$ is large enough, say $l \geq l_{12}^* = l_{12}^* (u , \beta_0 , \zeta)$, such that $\tilde l \geq l_8^*$ with $l_8^*$ from Proposition~\ref{prop:first}. By construction we have $n\colonequals \lfloor 2l+1 \rfloor / \lfloor 2 \tilde l+1 \rfloor \in 2\NN + 1$. Hence we can divide the cube $\cd{l}$ into smaller disjoint cubes $\cd{}^j = \cd{\tilde l} (z_j)$ with appropriate centers $z_j$, $j$ from one to $n^d$, and the property that $\cd{l} = \mathbin{\dot{\cup}_j} \cd{}^j$. Following \cite{Veselic-01}, see also \cite{Kirsch-08} for the discrete setting, we arrive at
 \begin{align} \label{eq:Neumann}
  \PP \bigl( \lambda_1 (h_\omega^l) < l^{-2+\zeta} \bigr) 
&= \sum_{j=1}^{n^d} \PP \bigl( \lambda_1 (h_\omega^{\cd{}^j , \rm N}) <  \beta_0^{-1} \tilde{l}^{-2} \bigr) .
 \end{align}
By translation invariance it remains to estimate $P\colonequals\PP ( \lambda_1 (h_\omega^{\cd{\tilde l} , \rm N}) < \beta_0^{-1} \tilde{l}^{-2} )$. Since $\tilde l \geq l_8^*$ and Assumption~\ref{ass:small_neg} is satisfied for $\delta = \tilde{l}^{-2}/(8 \beta_0 \omega_+)$ we can apply Proposition~\ref{prop:first} and obtain
\begin{align*}
P & \leq \PP \Biggl( \biggl\lvert \biggl\{k \in \cd{\tilde l} \colon \omega_k < \frac{4 \tilde{l}^{-2}}{\beta_0 \overline{u}} \biggr\} \biggr\rvert > \frac{13}{12} \frac{\lvert \cd{\tilde l} \rvert}{2} \Biggr) \\ &= \PP \Biggl( \frac{1}{\cd{\tilde l}} \sum_{k \in \cd{\tilde l}} \Eins_{\bigl[0 , \frac{4 \tilde{l}^{-2}}{\beta_0 \overline{u}} \bigr)} (\omega_k) \geq p + \Bigl(\frac{13}{24} - p\Bigr) \Biggr), \quad p \colonequals \PP \biggl(\omega_k \in \bigl[0 , \frac{4\tilde{l}^{-2}}{\beta_0 \overline{u}} \bigr) \biggr) .
\end{align*}
If $l \geq l_{13}^* = l_{13}^* (\mu , \beta_0,\zeta , u)$ then $p \leq 1/12$. Hence $13/24 - p$ is positive and Bernstein's inequality gives
\begin{equation} \label{eq:Bernstein}
 P \leq \euler^{- 2 \lvert \cd{\tilde l} \rvert \bigl( \frac{13}{24} - p \bigr)} \leq \euler^{-\lvert \cd{\tilde l} \rvert \bigl( \frac{11}{12} \bigr)^2} . 
\end{equation}
From Eq.~\eqref{eq:Neumann} and Ineq.~\eqref{eq:Bernstein} we infer 
\[
 \PP \bigl( \lambda_1 (h_\omega^l) <  l^{-2+\zeta} \bigr) \leq n^d \euler^{-\lvert \cd{\tilde l} \rvert \bigl( \frac{11}{12} \bigr)^2} 
 \leq \lfloor 2l+1 \rfloor^d \euler^{-( l^{- \zeta/2} \beta_0^{-1/2})^d} .
\]
The result follows by choosing $l$ sufficiently large, depending only on $u$, $\mu$, $\beta_0$, $\zeta$ and $\xi$.
\end{proof}
\begin{proposition} \label{pro:initial_small}
Let Assumption~\ref{ass:exp} be satisfied, $\overline{u} > 0$ and assume that there is $\epsilon_0 > 0$, such that
\[
 \PP (\omega_0 < \epsilon_0) \leq \frac{1}{12} .
\]
Let further $\xi$, $\kappa$, $\beta$, $q$ and $m_0$ be as  required in Definition~\ref{ass:ini}, $\zeta \in (2 - 2(1 - \beta) / \kappa , 2)$, 
and let $\beta_0$ be as in Proposition~\ref{prop:first}. 
\par
Then there exists $\delta > 0$ and $\epsilon_{l_0} > 0$, both depending only on $u$, $\mu$, $\beta_0$, $\zeta$, $\xi$, $\kappa$, $\beta$ and $q$, 
such that if Assumption~\ref{ass:small_neg} is satisfied for $\delta$, then the initial scale estimate holds in in the interval $[ - \epsilon_{l_0}/2 , \epsilon_{l_0}/2]$.
\end{proposition}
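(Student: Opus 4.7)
The plan is to combine the Lifshitz-tail bound of Proposition~\ref{prop:lifshitz} with a discrete Combes--Thomas estimate, and to extract the necessary uniformity in the outer random variables from Lemma~\ref{lemma:eig_perturb}. First I choose $l_0\in\NN$ large enough (depending on $u,\mu,\beta_0,\zeta,\xi,\kappa,\beta,q,m_0$) so that $l_0\ge\max\{l^*,\overline l\}$, $m_0>l_0^{\beta-1}$, and the parity condition~\eqref{eq:ungerade} of Proposition~\ref{prop:lifshitz} holds, which is possible along an unbounded sequence by Remark~\ref{remark:unbounded}. Set $\delta\colonequals l_0^{\zeta-2}/(8\omega_+)$ and $\epsilon_{l_0}\colonequals l_0^{-2+\zeta}/2$. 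If Assumption~\ref{ass:small_neg} holds for this $\delta$, it also holds for any larger value by rescaling $u_-$, so the hypothesis of Proposition~\ref{prop:lifshitz} is satisfied at scale $l_0$.

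For $z_1,z_2\in\ZZ^d$ with $\lVert z_1-z_2\rVert_\infty>8l_0$, Proposition~\ref{prop:lifshitz} yields $\PP(\lambda_1(h_\omega^{\cd{l_0}(z_i)})<l_0^{-2+\zeta})\le l_0^{-\xi}$ for $i=1,2$. By Lemma~\ref{lemma:eig_perturb}, whenever $\omega'$ agrees with $\omega$ on $\cd{4l_0}(z_i)$, one has $\lvert v_\omega(x)-v_{\omega'}(x)\rvert\le C\euler^{-3l_0\alpha/2}$ for $x\in\cd{l_0}(z_i)$, so by min--max the lowest eigenvalue of $h_{\omega'}^{\cd{l_0}(z_i)}$ differs from that of $h_\omega^{\cd{l_0}(z_i)}$ by an exponentially small amount. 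Consequently the $\cd{4l_0}(z_i)$-measurable event
\[
D_i\colonequals\Bigl\{\omega\colon \lambda_1\bigl(h_{\omega'}^{\cd{l_0}(z_i)}\bigr)\ge \tfrac14 l_0^{-2+\zeta}\ \text{for every }\omega'\text{ with }\Pi_{\cd{4l_0}(z_i)}\omega'=\Pi_{\cd{4l_0}(z_i)}\omega\Bigr\}
\]
contains the Lifshitz-good event for $l_0$ large, so $\PP(D_i)\ge 1-l_0^{-\xi}$.

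On $D_i$ and for every $E\in I\colonequals[-\epsilon_{l_0}/2,\epsilon_{l_0}/2]$, the distance from $E$ to $\sigma(h_{\omega'}^{\cd{l_0}(z_i)})$ is at least $l_0^{-2+\zeta}/8$, so the discrete Combes--Thomas estimate gives
\[
\bigl\lvert G_{\omega'}^{\cd{l_0}(z_i)}(E;z_i,w)\bigr\rvert \le C\, l_0^{2-\zeta}\,\euler^{-c\, l_0^{-1+\zeta/2}\lVert z_i-w\rVert_\infty}
\]
for $w\in\partial^{\rm i}\cd{l_0}(z_i)$. This is dominated by $\euler^{-m_0l_0}$ as soon as $m_0\lesssim l_0^{\zeta/2-1}$, a condition which, together with $m_0>l_0^{\beta-1}$, is solvable in $l_0$ precisely because $\zeta>2-2(1-\beta)/\kappa>2\beta$ (the second inequality using $\kappa>1$). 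Thus $D_i$ is contained in the event that $\cd{l_0}(z_i)$ is uniformly $(m_0,E)$-regular for every $E\in I$; let $C_i$ denote its complement.

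Since $C_1$ and $C_2$ are $\cd{4l_0}(z_1)$- and $\cd{4l_0}(z_2)$-measurable with disjoint domains, they are independent. As $B_{l_0}(z_1,z_2,m_0,I)\subset C_1\cap C_2$, we conclude $\PP(B_{l_0}(z_1,z_2,m_0,I))\le\PP(C_1)\PP(C_2)\le l_0^{-2\xi}$, i.e.\ $G(I,l_0,m_0,\xi)$ holds, which is the initial scale estimate. The main obstacle is to reconcile the MSA lower bound $l_0\ge\overline l\sim m_0^{-\kappa/(1-\beta)}$ with the Combes--Thomas upper bound $l_0\lesssim m_0^{-1/(1-\zeta/2)}$; the hypothesis $\zeta>2-2(1-\beta)/\kappa$ is exactly the condition that makes these compatible.
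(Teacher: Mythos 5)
Your proof follows the same route as the paper's: Proposition~\ref{prop:lifshitz} gives the Lifshitz-tail bound, Lemma~\ref{lemma:eig_perturb} (i.e.\ Ineq.~\eqref{eq:perturb}) upgrades the spectral gap to a uniform one over all configurations agreeing on $\cd{4l_0}(z_i)$, Combes--Thomas converts the gap into Green's function decay, and independence of the two $\cd{4l_0}$-cylinder events yields $G(I,l_0,m_0,\xi)$. All of these steps are correct and match the paper. The one place where you deviate, and where your argument is fragile, is the role of $m_0$: you treat $m_0$ as prescribed and then need the two-sided window $\overline l(m_0)\le l_0\lesssim m_0^{-1/(1-\zeta/2)}$ to be nonempty \emph{and} to contain a scale satisfying the parity condition \eqref{eq:ungerade}. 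For an arbitrary prescribed $m_0$ (say $m_0$ large) the upper bound forces $l_0<1$ and the window is empty, and even when it is nonempty Remark~\ref{remark:unbounded} only guarantees that admissible scales form an unbounded set, not that they meet a given bounded window. The paper sidesteps this entirely by making the mass an \emph{output} of the argument: it sets $m_0\colonequals m_{l_0}=(1-\epsilon)l_0^{\zeta/2-1}$, so that only lower bounds on $l_0$ remain (collected in \eqref{eq:l0}, whose first term encodes $l_0>\overline l$ via $\zeta>2-2(1-\beta)/\kappa$), and $l_0$ can then be taken as large as needed to satisfy \eqref{eq:ungerade}. Since the initial scale estimate of Definition~\ref{ass:ini} is existential in its parameters, your argument is repaired simply by choosing $m_0$ inside the window $(l_0^{\beta-1},(1-\epsilon)l_0^{\zeta/2-1}]$ after fixing $l_0$, rather than the other way around; as written, the final ``solvable in $l_0$'' step does not go through for every admissible $m_0$.
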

The length $\epsilon_{l_0}$ of the localization interval $[ - \epsilon_{l_0}/2 , \epsilon_{l_0}/2]$
can be determined following formulas \eqref{eq:interval_size},  \eqref{eq:loc_interval}, and \eqref{eq:l0}.
Similarly, $\delta$ equals $l_0^{\zeta - 2} / (8\omega_+)$ for $l_0$ as in  \eqref{eq:l0}.
\begin{proof}
Note that we know from Remark~\ref{remark:unbounded} that for each constant $K$ there is an $l \geq K$ which satisfies condition \eqref{eq:ungerade}. For all $l \geq l_{11}^*$ satisfying condition \eqref{eq:ungerade} and assuming that Assumption~\ref{ass:small_neg} is satisfied for $\delta = l^{\zeta - 2} / (8 \omega_+)$, we have by Proposition~\ref{prop:lifshitz} 
   \[
  \PP \bigl( \lambda_1 (h_\omega^{l}) \geq l^{-2+\zeta} \bigr) \geq1- l^{-\xi}  .
 \]
 Let $\Omega_{\rm L} \colonequals \{\omega \in \Omega \colon \lambda_1 (h_\omega^{l}) \geq l^{-2+\zeta}\}$. From Ineq.~\eqref{eq:perturb} we infer that the set
 \begin{multline*}
  \Omega_{\rm L}^* \colonequals \{ \omega \in \Omega \colon \lambda_1 (h_{\omega'}^{l}) \geq l^{-2+\zeta} - C_{u,\omega_+} \euler^{-3 l \alpha / 2}\\ \text{for all $\omega' \in \Omega$ with $\Pi_{\cd{4l}} \omega' = \Pi_{\cd{4l}} \omega$} \} .
 \end{multline*}
satisfies $\Omega_{\rm L}^* \supset \Omega_{\rm L}$, hence $\PP (\Omega_{\rm L}^*) \geq 1 - l^{-\xi}$. 
We assume that $l \geq l_{14}^* = l_{14}^* (\zeta , u , \omega_+)$ such that 
\begin{equation} \label{eq:interval_size}
\epsilon_l \colonequals  l^{-2 + \zeta} - C_{u,\omega_+}\euler^{-3 l \alpha / 2}> 0.
 \end{equation}
Let $\omega \in \Omega_{\rm L}^*$ and 
\begin{equation} \label{eq:loc_interval}
 E \in I_l = \biggl[- \frac{\epsilon_l}{2} , \frac{\epsilon_l}{2}\biggr] .
\end{equation}
Then $d (E , \sigma (h_{\omega'}^l)) \geq \epsilon_l / 2$ for all $\omega' \in \Omega$ with $\Pi_{\cd{4l}} \omega' = \Pi_{\cd{4l}} \omega$, or with our shorthand notation, $\tilde d (E , \sigma (h_{\omega}^l)) \geq \epsilon_l / 2$. The Combes-Thomas estimate, see e.g.\ \cite{Klopp-02}, gives that there is a universal constant $C$ such that for all $n,m \in \cd{l}$ and all $\omega' \in \Omega$ with $\Pi_{\cd{4l}} \omega' = \Pi_{\cd{4l}} \omega$ that
\[
 \lvert G_{\omega'}^{\cd{l}} (E , n,m) \rvert \leq \left( \frac{C}{\rho (E)} \right)^2 \euler^{- \rho (E) \lVert m-n \rVert / C}
\]
where
\[
 \rho (E) = \inf \Bigl\{ \sqrt{d (E, \sigma (h_{\omega'}^l))} , 1/4 \Bigr\} .
\]
Hence, for each $\epsilon \in (0,1)$ we find $l_{15}^* = l_{15}^* (\epsilon , u,\mu,\zeta)$ such that for all $l \geq l_{15}^*$ we have
\[
 \sup_{w \in \partial^{\rm i} \cd{l}}\lvert G_{\omega'}^{\cd{l}} (E; 0,w) \rvert \leq \euler^{-(1-\epsilon) l^{\zeta / 2 - 1} l} 
\]
for all $\omega' \in \Omega$ with $\Pi_{\cd{4l}} \omega' = \Pi_{\cd{4l}} \omega$. Hence, we have for all $\epsilon \in (0,1)$ and all $l \geq \max\{l_{11}^* , l_{14}^* , l_{15}^*\}$ satisfying Eq.~\eqref{eq:ungerade} (assuming for the moment that Assumption~\ref{ass:small_neg} is satisfied for an appropriate $\delta$)
\[
 \PP (\forall E \in I_l \ \text{the cube} \ \cd{l} \ \text{is uniformly $((1-\epsilon) l ^{\zeta / 2 - 1} , E)$-regular}) \geq 1 - l^{-\xi} .
\]
Let $z_1 , z_2 \in \ZZ^d$ with $\lVert z_1 - z_2 \rVert_\infty > 8l$ and set $P \colonequals \PP (B_{l} (z_1 , z_2 , m_l , I_l))$ where $m_l = (1-\epsilon) l ^{\zeta / 2 - 1}$. We use translation invariance and the independence of two ``disjoint'' cylindersets and obtain
\begin{align*}
 P &\leq \PP \Bigl( \{\exists E \in I_l \colon \text{$\cd{l} (z_1)$ is not uniformly $(m_l , E$)-regular}\} \Bigr. \\ &\qquad \Bigl. \cap \{\exists E \in I_l \colon \text{$\cd{l} (z_2)$ is not uniformly $(m_l , E$)-regular}\}  \Bigr) \\
 &=\PP \Bigl( \{\exists E \in I_l \colon \text{$\cd{l}$ is not uniformly $(m_l , E$)-regular}\} \Bigr)^2 \leq l^{-2\xi}
\end{align*}
Since $\zeta > 2\beta$, which follows from our assumption $\zeta > 2 - 2(1 - \beta) / \kappa$, there is $l_{16}^* = l_{16}^* (\zeta , \beta , \epsilon)$ such that for $l \geq l_{16}^*$ we have $m_l > l^{\beta - 1}$ as required in Definition~\ref{ass:ini}. 
If we pick 
\begin{equation} \label{eq:l0}
l_0 > \max \left\{ \left( \frac{3}{(1-q)(1-\epsilon)} \right)^{\frac{2}{\zeta - ( 2 - 2(1-\beta)/\kappa )}} , l_{11}^* , l_{14}^* , l_{15}^* , l_{16}^* , l^* \right\}
\end{equation}
 satisfying Eq.~\eqref{eq:ungerade}, set $m_0 = m_{l_0}$, and assume that Assumption~\ref{ass:small_neg} is satisfied for $\delta = l_0^{\zeta - 2} / (8\omega_+)$, then the initial scale estimate is satisfied in $I_{l_0}$. Since $\zeta > 2 - 2(\beta - 1) / \kappa$, the first condition in \eqref{eq:l0} ensures that $l_0 > \overline{l}$ as required in Definition~\ref{ass:ini}.
\end{proof}
\begin{proof}[Proof of Theorem~\ref{thm:loc:large}] The statement follows from Lemma~\ref{lemma:ini_large} and Theorem~\ref{thm:loc_under_ini}
 \end{proof}
\begin{proof}[Proof of Theorem~\ref{thm:loc:weak}] The statement follows from Proposition~\ref{pro:initial_small} and Theorem~\ref{thm:loc_under_ini}.
\end{proof}
%
%
%

\section{Localization via multiscale analysis (continuous model)} \label{sec:loc_cont}
In Section~\ref{sec:loc_discrete} we apply the multiscale analysis \`a la \cite{KirschSS-98b} to the discrete alloy-type model with exponentially decaying single-site potential. Beyond doubt, on the basis of the Wegner estimate from Theorem~\ref{theorem:wegner_c} one could do the same for the (continuous) alloy-type model with exponentially decaying convolution vector. 
\par
However, to keep things short, we just note that Proposition~\ref{prop:unif_wegner_type_cont} replaces \cite[Lemma~3.4]{KirschSS-98b}, which is sufficient for the induction step of the multiscale analysis. Hence, once an appropriate initial length scale estimate is satisfied one obtains localization. More precisely, Proposition~\ref{prop:unif_wegner_type_cont} and the multiscale analysis \`a la \cite{KirschSS-98b} imply the following theorem.
\begin{theorem}[localization, continuous model]
Assume that $U$ is a generalized step function and assume that Assumptions~\ref{ass:bv} and \ref{ass:exp} are satisfied. Denote by $a$ the infimum of the almost sure spectrum of $H_\omega$ and assume further that for any $\xi > 0$ and $\beta_0 \in (0,2)$ there is an $l^* = l^* (\xi , \beta_0)$ such that 
\begin{equation} \label{eq:initialKSS}
 \PP \Bigl( \tilde d \bigl(a , \sigma (H_\omega^l)  \bigr) \leq l^{\beta_0 - 2}  \Bigr) \leq l^{-\xi} .
\end{equation}
Then, for almost every $\omega \in \Omega$, the spectrum of $H_\omega$ is only of pure point type in a neighborhood of $a$ with exponentially decaying eigenfunctions.
\end{theorem}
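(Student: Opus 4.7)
The plan is to follow the multiscale analysis (MSA) of \cite{KirschSS-98b} for the continuous alloy-type model, inserting Proposition~\ref{prop:unif_wegner_type_cont} in place of \cite[Lemma~3.4]{KirschSS-98b} (the uniform Wegner-type input controlling resonances of two well-separated box Hamiltonians) and using the hypothesis~\eqref{eq:initialKSS} as the induction anchor at the bottom of the spectrum. The MSA produces, for a sufficiently small neighborhood $J$ of $a$ and appropriate parameters $m,\xi,\kappa > 0$, a scale sequence $l_k = l_0^{\kappa^k}$ together with the statement that for all $x,y \in \RR^d$ with $\lVert x - y \rVert_\infty$ sufficiently large,
\[
 \PP \bigl(\exists\, E \in J : \Lambda_{l_k}(x) \text{ and } \Lambda_{l_k}(y) \text{ are both }(m,E)\text{-singular}\bigr) \leq l_k^{-2\xi}.
\]
From such an output, pure point spectrum in $J$ with exponentially decaying eigenfunctions follows via the continuum analog of Theorem~\ref{thm:vDK-2.3}, combined with the polynomial-boundedness of generalized eigenfunctions \cite{Berezanskii-68,Simon-82}; this part is already contained in \cite{KirschSS-98b}.

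The core work is the induction step, promoting the MSA hypothesis from scale $l$ to scale $L = l^\kappa$. I would proceed in close parallel to Theorem~\ref{thm:induction}: fix the ``good'' event that no four disjoint $l$-sub-cubes of $\Lambda_L$ are simultaneously not uniformly $(m,E)$-regular, cover the at most three bad sub-cubes by non-touching containers, and iterate the geometric resolvent identity along chains of uniformly regular $l$-cubes inside $\Lambda_L(x)$. The probability that the spectra of two disjoint $L$-cubes approach each other within $L^{-\zeta}$, uniformly in the configuration outside their $4$-enlargements, is bounded directly by Proposition~\ref{prop:unif_wegner_type_cont}. Choosing $\xi > d$, $\kappa$ close to $1$, and $\beta_0$ small enough in~\eqref{eq:initialKSS}, one arranges that the combined exponent $5d + \lVert I_0 \rVert_1 - \xi/\kappa$ is less than $-2\xi$, which is precisely what drives the induction, in direct analogy with the calculation leading to \eqref{eq:l3*}. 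The exponential resonance error $\euler^{-3 \min\{l_1,l_2\}\alpha/2}$ in Proposition~\ref{prop:unif_wegner_type_cont} is then harmless at large scales.

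The main obstacle I expect is purely technical rather than conceptual: one has to redo the container-cover and uniform $E$-NR bookkeeping of Section~\ref{sec:induction} in the continuum setting, where the geometric resolvent identity involves commutators of $\Delta$ with smooth cut-offs rather than a discrete bond boundary sum, and one works with sandwiched resolvent kernels $\chi_x (H_\omega^\Lambda - E)^{-1} \chi_y$ instead of matrix entries. These ingredients are standard and available off the shelf from \cite{KirschSS-98b}. A secondary point is that the neighborhood $J$ of $a$ must be fixed at the outset so that the factor $\euler^{\lvert \sup J \rvert}$ from Proposition~\ref{prop:unif_wegner_type_cont} is absorbed into a constant throughout the iteration; this is automatic since $a$ is a fixed spectral boundary.
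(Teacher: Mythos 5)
Your proposal is correct and follows exactly the route the paper takes: the paper's own proof consists precisely of the observation that Proposition~\ref{prop:unif_wegner_type_cont} substitutes for \cite[Lemma~3.4]{KirschSS-98b} in the induction step of the multiscale analysis of \cite{KirschSS-98b}, with the hypothesis \eqref{eq:initialKSS} serving as the initial scale estimate, and your sketch of the induction step in parallel with Theorem~\ref{thm:induction} is consistent with this. In fact you supply more detail than the paper does, which leaves the continuum bookkeeping entirely to \cite{KirschSS-98b}.
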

Here, the uniform distance $\tilde d$ from Ineq.~\eqref{eq:initialKSS} is defined by
\[
 \tilde d \bigl(a , \sigma (H_\omega^l)  \bigr) =\inf_{\omega_1^{\bot} \in \, \Omega_{\ZZ^d\setminus \cd{4l}}} 
d\Bigl(E,\sigma\bigl(H^{l}_{(\omega_1 , \omega_1^\bot )}\bigr)\Bigr) 
\]
where $\omega_1 \in \Omega_{\cd{4l}}$ is defined by 
$\omega_1=\Pi_{\cd{4l}}\omega$.
\begin{remark}
Let us finally discuss the validity of an initial scale estimate as formulated in Ineq.~\eqref{eq:initialKSS}. 
If the single-site potential $U$ is non-negative and satisfies $\lvert U(x) \rvert \leq C \lVert x \rVert^{-m}$ for $m$ large, Ineq.~\eqref{eq:initialKSS} is a well known fact, see e.g.\ \cite{KirschSS-98a,KirschSS-98b} for the case where $U$ is compactly supported. If the single-site potential changes its sign and has unbounded support, far less is known. 
However, similarly to Lemma~\ref{prop:lifshitz} of Section~\ref{sec:ini} one can prove Ineq.~\eqref{eq:initialKSS} 
for the alloy-type model on $L^2 (\RR^d)$ if the single-site potential $U$ is a generalized step function with an exponentially decaying convolution vector of a small negative part.
In the case that the single-site potential is even compactly supported this has been done in Section~5
of \cite{Veselic-01}.
\end{remark}

\subsubsection*{Acknowledgement}
Financial support of the Deutsche Forschungsgemeinschaft is gratefully 
acknowledged. We thank the referee for useful remarks.
%
%

\end{document}